\newcommand{\norm}[1]{\left\Vert #1 \right\Vert}
\def\Re{\ensuremath{\mathrm{Re}}\,}
\def\Im{\ensuremath{\mathrm{Im}}\,}
\def\sgn{\ensuremath{\mathrm{sgn}}\,}
\newtheorem{theorem}{Theorem}
\newtheorem{lemma}[theorem]{Lemma}
\newtheorem{proposition}[theorem]{Proposition}
\begin{document}
\title{Sums, series, and products in Diophantine approximation}
\author{Jordan Bell \\
jordan.bell@gmail.com \\
Department of Mathematics, University of Toronto\\
Toronto, Ontario, Canada}
\date{\today}

\maketitle

\begin{abstract}
There is not much that can be said for all $x$ and for all $n$ about the sum \[ \sum_{k=1}^n \frac{1}{|\sin k\pi x|}. \] However, for this and similar sums, series, and products, we can establish results for almost all $x$ using the tools of continued fractions. We present in detail the appearance of these sums in the singular series for the circle method. One particular interest of the paper is the detailed proof of a striking result of Hardy and Littlewood, whose compact proof, which delicately uses analytic continuation, has not been written freshly anywhere since its original publication. This story includes various parts of late 19th century and early 20th century mathematics.
\end{abstract}

\tableofcontents

\section{Introduction}
In this paper we survey a class of estimates for sums, series, and products that involve Diophantine approximation.
We both sort out a timeline of the literature on these questions and give careful proofs of many lesser known results.
Rather than being an open-pit mine for the history of Diophantine approximation, this paper follows one vein as deep as it goes.

For $x \in \mathbb{R}$, we write $\norm{x}=\min_{n \in \mathbb{Z}} |x-n|$, the distance from $x$ to a nearest integer. 
In this paper we give a comprehensive presentation of estimates for sums whose $j$th term involves $\norm{jx}$ and
determine the abscissa of convergence and radius of convergence respectively of Dirichlet series and power series whose
$j$th term involves $\norm{jx}$. We also give a detailed proof of a result of Hardy and Littlewood that 
\[
\lim_{n \to \infty} \left( \prod_{k=1}^n |\sin k\pi x| \right)^{1/n} = \frac{1}{2}
\]
for almost all $x$.

We either prove or state in detail and give references for all the material on continued fractions and measure theory that we use in this paper. 
Many of the results we prove in this paper do not have detailed proofs written in any books, and the proofs we give for results
that do have proofs in books are often written significantly more  meticulously here than anywhere else; in some cases the proofs
in the literature are so sketchy that the proof we give is written from scratch, for example Lemma \ref{h0}.
Our presentation of Hardy and Littlewood's estimate for
$\prod_{k=1}^n |\sin \pi kx|$  makes clear exactly what results in Diophantine approximation one needs for the proof.

In the next section we introduce the Bernoulli polynomials, the Euler-Maclaurin summation formula, and Euler's constant, which we shall use in a few places.
Because the calculations are similar to what we do in the rest of this paper and because we want to be comfortable using Bernoulli polynomials, 
we work things out from scratch rather than merely stating results as known.
In the section after  that we summarize various problems that involve sums of the type we are talking about in this paper.

\section{The Bernoulli polynomials, the Euler-Maclaurin summation formula, and Euler's constant}
For $k \geq 0$, the \textbf{Bernoulli polynomial} $B_k(x)$ is defined by
\begin{equation}
\frac{ze^{xz}}{e^z-1} = \sum_{k=0}^\infty B_k(x) \frac{z^k}{k!},\qquad |z|<2\pi.
\label{bernoullipolynomials}
\end{equation}
The \textbf{Bernoulli numbers} are $B_k = B_k(0)$, the constant terms of the Bernoulli polynomials.
For any $x$, using L'Hospital's rule the left-hand side of \eqref{bernoullipolynomials} tends to $1$ as $z \to 0$, and the right-hand side
tends to $B_0(x)$, hence $B_0(x)=1$. 
Differentiating \eqref{bernoullipolynomials} with respect to $x$,
\[
\sum_{k=0}^\infty B_k'(x) \frac{z^k}{k!} = \frac{z^2 e^{xz}}{e^z-1} = \sum_{k=0}^\infty B_k(x) \frac{z^{k+1}}{k!}
=\sum_{k=1}^\infty B_{k-1}(x) \frac{z^k}{(k-1)!},
\]
so $B_0'(x) = 0$ and for $k \geq 1$ we have $\frac{B_k'(x)}{k!} = \frac{B_{k-1}(x)}{(k-1)!}$, i.e.
\[
B_k'(x)=k B_{k-1}(x).
\]
Furthermore, integrating \eqref{bernoullipolynomials} with respect
to $x$ on $[0,1]$ gives, since $\int e^{xz} dx = \frac{e^z-1}{z}$,
\[
1 = \sum_{k=0}^\infty \left( \int_0^1 B_k(x) dx \right) \frac{z^k}{k!},\qquad |z|<2\pi,
\]
hence $\int_0^1 B_0(x) dx =1$ and for $k \geq 1$,
\[
\int_0^1 B_k(x) dx = 0.
\] 
The first few Bernoulli polynomials are
\[
B_0(x)=1,\quad B_1(x) = x-\frac{1}{2},\quad B_2(x) = x^2-x+\frac{1}{6},
\quad B_3(x)=x^3-\frac{3}{2}x^2+\frac{1}{2}x.
\]

The Bernoulli polynomials  satisfy the following:
\begin{align*}
\sum_{k=0}^\infty B_k(x+1) \frac{z^k}{k!}&=\frac{ze^{(x+1)z}}{e^z-1}\\
&=\frac{ze^{xz} (e^z-1+1)}{e^z-1}\\
&=ze^{xz}+\frac{ze^{xz}}{e^z-1}\\
&=\sum_{k=0}^\infty \frac{x^k z^{k+1}}{k!} + \sum_{k=0}^\infty B_k(x) \frac{z^k}{k!}\\
&=\sum_{k=1}^\infty \frac{x^{k-1} z^k}{(k-1)!} +  \sum_{k=0}^\infty B_k(x) \frac{z^k}{k!},
\end{align*}
hence
\begin{equation}
B_k(x+1) = kx^{k-1} + B_k(x),\qquad k \geq 1, x \in \mathbb{R}.
\label{bernoulliplus}
\end{equation}
In particular, for $k \geq 2$,
$B_k(1)=B_k(0)$.
The  identity  \eqref{bernoulliplus} yields \textbf{Faulhaber's formula}, for $k \geq 0$ and positive integers $a < b$,
\begin{equation}
\sum_{a \leq m \leq b} m^k = \frac{B_{k+1}(b+1)-B_{k+1}(a)}{k+1}.
\label{faulhaber}
\end{equation}

The following identity is  the \textbf{multiplication formula for the Bernoulli polynomials}, found by Raabe \cite[pp.~19--24,
\S 13]{raabe}.

\begin{lemma}
For $k \geq 0$, $q \geq 1$, and $x \in \mathbb{R}$,
\[ 
q B_k(qx) = q^k \sum_{j=0}^{q-1} B_k\left(x+\frac{j}{q}\right).
\]
\label{bernoullisum}
\end{lemma}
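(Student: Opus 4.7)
The plan is to prove the identity by manipulating the generating function \eqref{bernoullipolynomials} rather than working with the polynomials directly. Fix $q \geq 1$ and $x \in \mathbb{R}$. First I would form the sum over $j$ of the generating series, which by linearity is
\[
\sum_{j=0}^{q-1} \sum_{k=0}^\infty B_k\!\left(x+\tfrac{j}{q}\right) \frac{z^k}{k!} = \sum_{j=0}^{q-1} \frac{z e^{(x+j/q)z}}{e^z-1} = \frac{z e^{xz}}{e^z-1}\sum_{j=0}^{q-1} e^{jz/q},
\]
valid for $|z|<2\pi$. The key step is to recognize the inner sum as a finite geometric series with ratio $e^{z/q}$, which telescopes to $\frac{e^z-1}{e^{z/q}-1}$. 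The factor $e^z-1$ then cancels the denominator, leaving the compact expression $\frac{z e^{xz}}{e^{z/q}-1}$.

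Next I would introduce the substitution $w = z/q$ to bring this expression into the shape of \eqref{bernoullipolynomials}, evaluated at $qx$ rather than $x$:
\[
\frac{z e^{xz}}{e^{z/q}-1} = q \cdot \frac{w e^{(qx) w}}{e^w-1} = q\sum_{k=0}^\infty B_k(qx) \frac{w^k}{k!} = \sum_{k=0}^\infty q^{1-k} B_k(qx) \frac{z^k}{k!}.
\]
Equating coefficients of $z^k/k!$ in the two expansions gives $\sum_{j=0}^{q-1} B_k(x+j/q) = q^{1-k} B_k(qx)$, which is the claimed identity after multiplying through by $q^k$.

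The only mildly delicate point is checking convergence for the substitution: since \eqref{bernoullipolynomials} holds for $|z|<2\pi$ and the substituted series involves the variable $w=z/q$ with $|w|<2\pi/q \leq 2\pi$, both generating series converge in a common neighborhood of $0$, so comparing power series coefficients is legitimate. This is the main obstacle, but it is routine; the rest is purely algebraic manipulation of the exponential generating function.
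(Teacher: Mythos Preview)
Your proof is correct and complete; the generating-function manipulation, the geometric-series telescoping, and the coefficient comparison are all sound, and you rightly note that both series converge on a common disc around $0$.

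This is a genuinely different route from the paper's. The paper argues combinatorially: it computes $\sum_{m=q}^{Nq-1} m^k$ in two ways (once by Faulhaber's formula \eqref{faulhaber}, once by grouping the range into blocks of length $q$ and applying the recursion $B_{k+1}(y+1)-B_{k+1}(y)=(k+1)y^k$), deduces that a certain polynomial $F_{k,q}(x)$ takes the same value at every positive integer and is therefore constant, and finally differentiates to obtain the identity for $B_k$. Your approach is shorter and more transparent, exploiting the exponential generating function directly; the paper's approach is more self-contained in that it uses only the recurrence and Faulhaber already derived in the text, and it illustrates the ``polynomial constant on $\mathbb{N}$ implies constant'' trick. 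Either argument is standard for Raabe's multiplication formula.
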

\begin{proof}
Using \eqref{bernoulliplus} with $x=\frac{qn+j}{q}=n+\frac{j}{q}$,
\[
(qn+j)^k  = \frac{q^k}{k+1} \left( B_{k+1} \left(n+\frac{j}{q}+1\right) - B_{k+1}\left(n+\frac{j}{q}\right)\right),
\]
 thus
\begin{align*}
\sum_{m=q}^{Nq-1} m^k& = \sum_{n=1}^{N-1} \sum_{j=0}^{q-1} (nq+j)^k \\
&= \frac{q^k}{k+1} \sum_{j=0}^{q-1} \sum_{n=1}^{N-1} \left( B_{k+1} \left(n+\frac{j}{q}+1\right) - B_{k+1}\left(n+\frac{j}{q}\right)\right)\\
&= \frac{q^k}{k+1} \sum_{j=0}^{q-1} \left( B_{k+1} \left(N+\frac{j}{q}\right)-B_{k+1}\left(1+\frac{j}{q}\right)\right).
\end{align*}
Then by \eqref{faulhaber},
\[
B_{k+1}(qN)-B_{k+1}(q) = q^k \sum_{j=0}^{q-1} \left( B_{k+1} \left(N+\frac{j}{q}\right)-B_{k+1}\left(1+\frac{j}{q}\right)\right).
\]
Let $F_{k,q}(x) = B_{k+1}(qx) - q^k \sum_{j=0}^{q-1} B_{k+1} (x+ j/q)$, 
which is a polynomial of degree $\leq k+1$. The above means 
that for $N \geq 1$, $F_{k,q}(N)=F_{k,q}(1)$, which implies that the polynomial $F_{k,q}(x)-F_{k,q}(1)$ is identically $0$
and, a fortiori, $F_{k,q}'(x)$ is identically $0$:
\[
q B_{k+1}'(qx) - q^k \sum_{j=0}^{q-1} B_{k+1}'\left(x+\frac{j}{q} \right) = 0.
\]
Using $B_{k+1}'(x) = (k+1) B_k(x)$,
\[
q B_k(qx) = q^k \sum_{j=0}^{q-1} B_k\left(x+\frac{j}{q}\right).
\]
\end{proof}

We remark that for prime $p$ and for $k \geq 0$, one uses Lemma \ref{bernoullisum} to prove that
there is a unique $p$-adic distribution $\mu_{B,k}$ on the $p$-adic integers $\mathbb{Z}_p$ such that
$\mu_{B,k}(a+p^N \mathbb{Z}_p) = p^{N(k-1)} B_k(a/p^N)$ \cite[p.~35, Chapter II, \S 4]{koblitz},
called a \textbf{Bernoulli distribution}.

For $x \in \mathbb{R}$, let $[ x ]$ be the greatest integer $\leq x$, and let $R(x)=x-[x]$, called the \textbf{fractional part of $x$}.
Write $\mathbb{T}=\mathbb{R}/\mathbb{Z}$ and define the 
\textbf{periodic Bernoulli functions} $P_k:\mathbb{T} \to \mathbb{R}$ by
\[
P_k = B_k \circ R.
\] 
For $k \geq 2$, because $B_k(1)=B_k(0)$, the function $P_k$ is continuous. 
For $f:\mathbb{T} \to \mathbb{C}$ define its \textbf{Fourier series} $\hat{f}:\mathbb{Z} \to \mathbb{C}$ by
\[
\hat{f}(n) = \int_\mathbb{T} f(t) e^{-2\pi int} dt,\qquad n \in \mathbb{Z}.
\]
For $k \geq 1$, one calculates $\widehat{P}_k(0)=0$ and using integration by parts, $\widehat{P}_k(n)= - \frac{1}{(2\pi in)^k}$ for
$n \neq 0$.
Thus for $k \geq 1$, the Fourier series of $P_k$ is
\[
P_k(t) \sim \sum_{n \in \mathbb{Z}} \widehat{P}_k(n) e^{2\pi int} = -\frac{1}{(2\pi i)^k} \sum_{n \neq 0} n^{-k} e^{2\pi int}.
\]
For $k \geq 2$, $\sum_{n \in \mathbb{Z}} |\widehat{P}_k(n)|<\infty$, from which it follows that $\sum_{|n| \leq N}  \widehat{P}_k(n) e^{2\pi int}$
converges to $P_k(t)$ uniformly for $t \in \mathbb{T}$.
Furthermore, for $t \not \in \mathbb{Z}$  \cite[p.~499, Theorem B.2]{multiplicative},
\[
P_1(t) = - \frac{1}{\pi} \sum_{n=1}^\infty \frac{1}{n} \sin 2\pi nt.
\]
Thus for example,
\[
B_1\left(\frac{1}{2\pi} \right) = P_1\left(\frac{1}{2\pi} \right) = - \frac{1}{\pi} \sum_{k=1}^\infty \frac{\sin k}{k}.
\]

The \textbf{Euler-Maclaurin summation formula} is the following \cite[p.~500, Theorem B.5]{multiplicative}. If $a<b$ are real numbers,
$K$ is a positive integer, and $f$ is a $C^K$ function on an open set that contains $[a,b]$, then 
\begin{align*}
\sum_{a<m \leq b} f(m)&=\int_a^b f(x) dx + \sum_{k=1}^K \frac{(-1)^k}{k!} (P_k(b) f^{(k-1)}(b)-P_k(a) f^{(k-1)}(a))\\
&-\frac{(-1)^K}{K!} \int_a^b P_K(x)  f^{(K)}(x) dx.
\end{align*}

Applying the Euler-Maclaurin summation formula with $a=1, b=n, K=2, f(x)=\log x$ yields \cite[p.~503, Eq. B.25]{multiplicative}
\[
\sum_{1 \leq m \leq n} \log n = n \log n - n  + \frac{1}{2} \log n + \frac{1}{2} \log 2\pi + O(n^{-1}).
\]
Using $e^{1+O(n^{-1})} = 1+O(n^{-1})$, taking the exponential of the above equation gives \textbf{Stirling's approximation},
\[
n! = n^n e^{-n} \sqrt{2\pi n} (1+O(n^{-1})).
\]

Write $a_n  = - \log n+ \sum_{1 \leq m \leq n} \frac{1}{m}$. Because $x \mapsto \log(1-x)$ is concave,
\[
a_n - a_{n-1} = \frac{1}{n} + \log\left(1-\frac{1}{n} \right) \leq 1 + 1 - \frac{1}{n}=0,
\] 
which means that the sequence $a_n$ is nonincreasing. For $f(x)=\frac{1}{x}$, because
$f$ is positive and nonincreasing,
\[
\sum_{1 \leq m \leq n} f(m) \geq \int_1^{n+1} f(x) dx = \log(n+1) > \log n,
\]
hence $a_n>0$. Because the sequence $a_n$ is positive and nonincreasing, there exists some nonnegative limit,
$\gamma$, called \textbf{Euler's constant}. 
Using the Euler-Maclaurin summation formula with $a=1, b=n, K=1, f(x)=\frac{1}{x}$, as $P_1(x) = [x]-\frac{1}{2}$,
\[
\sum_{1 < m \leq n} \frac{1}{m} = \log n + \frac{1}{2n} - \frac{1}{2} + \frac{1}{2} \int_1^n \frac{1}{x^2} dx - \int_1^n R(x) \frac{1}{x^2} dx,
\]
which is
\[
\sum_{1<m \leq n} \frac{1}{m} = \log n - \int_1^\infty \frac{R(x)}{x^2} dx + \int_n^\infty \frac{R(x)}{x^2} dx.
\]
As $0 \leq R(x) x^{-2} \leq x^{-2}$, the function $x \mapsto R(x)x^{-2}$ is integrable
on $[1,\infty)$; let $C=1-\int_1^\infty R(x)x^{-2}$.
Since $0 \leq \int_n^\infty R(x)x^{-2} dx \leq \int_n^\infty x^{-2} dx = n^{-1}$, 
\[
\sum_{1 \leq m \leq n} \frac{1}{m} = \log n + C + O(n^{-1})
\]
f. But $-\log n + \sum_{1 \leq m \leq n} \frac{1}{m} \to \gamma$ as $n \to \infty$, from which it follows that
$C=\gamma$ and thus
\[
\sum_{1 \leq m \leq n} \frac{1}{m} = \log n + \gamma + O(n^{-1}).
\]

\section{Background}
\label{background}
For $x \in \mathbb{R}$, let $[ x ]$ be the greatest integer $\leq x$ and let $R(x)=x-[x]$.
It will be handy to review some properties of $x \mapsto [x]$.
For $n \in \mathbb{Z}$ it is immediate that $[x+n]=[x]$. 
For $x,y \in \mathbb{R}$ we have
$0 \leq x-[x] + y - [y] < 2$, which means
$0 = [0] \leq [x-[y]+y-[y]] < [2] = 2$, and using $[x+n]=[x]$ this is $0 \leq [x+y]-[x]-[y] < 2$, therefore
\[
[x] + [y] \leq [x+y] \leq [x]+[y]+1.
\]
For $m,n \in \mathbb{Z}$, $n>1$, and $x \in \mathbb{R}$,
\[
\left[ \frac{x+m}{n} \right] = \left[ \frac{[x]+m}{n} \right].
\]
For $n$ a positive integer and for real $x$,
\[
[x]=\left[x+\frac{0}{n}\right] \leq \left[x+\frac{1}{n}\right] \leq \cdots \left[x+\frac{n-1}{n} \right] \leq \left[x+\frac{n}{n} \right]
=[x]+1.
\]
There is a unique $\nu$, $1 \leq \nu \leq n$, such that $\left[x+\frac{\nu-1}{n} \right]= [x]$ and
$\left[x+\frac{\nu}{n} \right] = [x]+1$, 
and therefore $\left[R(x) + \frac{\nu-1}{n} \right] = 0$ and 
$\left[ R(x) + \frac{\nu}{n} \right]=1$, consequently
$R(x)+\frac{\nu-1}{n}<1$ and $R(x)+\frac{\nu}{n} \geq 1$, which means
$1-\frac{\nu}{n} \leq R(x) < 1-\frac{\nu-1}{n}$, from which finally we get
$n \leq [nx]-n[x]+\nu<n+1$ and so $n=[nx]-n[x]+\nu$.
But
\[
\sum_{k=0}^{n-1} \left[x+\frac{k}{n} \right] = \sum_{k=0}^{\nu-1} [x] + \sum_{k=\nu}^{n-1} ([x]+1)
=\nu [x] + (n-\nu)([x]+1)
=n[x]+n-\nu,
\]
and using $\nu=n-[nx]+n[x]$,
\[
\sum_{k=0}^{n-1} \left[x+\frac{k}{n} \right]  = n[x]+n-(n-[nx]+n[x]) = [nx].
\] 
This identity is proved by Hermite \cite[pp.~310--315, \S V]{hermite}.

The \textbf{Legendre symbol} is defined in the following way. Let $p$ be an odd prime, and let $a$ be an integer that is not a multiple of $p$.
If there is an integer $b$ such that $a \equiv b^2 \pmod{p}$ then
$\left( \frac{a}{p} \right)=1$, and otherwise $\left( \frac{a}{p} \right)=-1$. In other words, for an integer $a$ that is relatively prime to $p$, if $a$ is a square mod $p$ then
$\left( \frac{a}{p} \right)=1$, and if $a$ is not a square mod $p$ then $\left( \frac{a}{p} \right)=-1$. For example, one checks that there is no integer $b$ such that
$b^2 \equiv 6 \pmod{7}$, and hence $\left( \frac{6}{7} \right)=-1$, while $3^2 \equiv 2 \pmod{7}$, and so $\left( \frac{2}{7} \right)=1$.

If $p$ and $q$ are distinct odd primes, define integers $u_k$, $1 \leq k \leq \frac{p-1}{2}$, by
\[
kq=p \left[ \frac{kq}{p} \right] + u_k;
\]
namely, $u_k$ is the remainder of $kq$ when divided by $p$.
We have $1 \leq u_k \leq p-1$. Let $\mu(q,p)$ be the number of $k$ such that $u_k > \frac{p-1}{2}$. 
It can be shown that
\cite[p.~74, Theorem 92]{wright} 
\[
\left( \frac{q}{p} \right)=(-1)^{\mu(q,p)};
\]
this fact is called \textbf{Gauss's lemma}.
For example, for $p=13$ and $q=3$ we work out that
\[
u_1=3, \quad u_2=6, \quad u_3 = 9, \quad u_4=12, \quad u_5=2, \quad u_6=5,
\]
and hence $\mu(3,13)=2$, so $(-1)^{\mu(3,13)}=1$; on the other hand, $4^2 \equiv 3 \pmod{13}$, hence $\left( \frac{3}{13} \right)=1$.
With
\[
S(q,p)=\sum_{j=1}^{\frac{p-1}{2}} \left[ \frac{jq}{p} \right],
\]
it
is known that  \cite[pp.~77-78, \S 6.13]{wright} 
\[
S(q,p) \equiv \mu(q,p) \pmod{2}.
\]
And it can  be shown that \cite[p.~76, Theorem 100]{wright} 
\begin{equation}
S(q,p)+S(p,q)=\frac{p-1}{2} \cdot \frac{q-1}{2}.
\label{Sqp}
\end{equation}
Thus
\begin{eqnarray*}
\left( \frac{p}{q} \right) \left( \frac{q}{p} \right)&=&(-1)^{\mu(p,q)+\mu(q,p)}\\
&=&(-1)^{S(p,q)+S(q,p)}\\
&=&(-1)^{\frac{p-1}{2} \cdot \frac{q-1}{2}}.
\end{eqnarray*}
This is Gauss's third proof of the \textbf{law of quadratic reciprocity} in the numbering \cite[p.~50, \S 20]{werke}. This proof
was published in Gauss's 1808 ``Theorematis arithmetici demonstratio nova'', which is translated in \cite[pp.~112--118]{smith}.
Dirichlet \cite[pp.~65--72, \S\S 42--44]{dirichlet} gives a presentation of the proof.
Eisenstein's streamlined version of Gauss's third proof is presented with historical remarks in \cite{eisenstein}.
Lemmermeyer \cite{lemmermeyer} gives a comprehensive history of the law of quadratic reciprocity, and in particular writes about Gauss's third proof  \cite[pp.~9--10]{lemmermeyer}.
The formula \eqref{Sqp} resembles the reciprocity formula for Dedekind sums \cite[p.~4, Theorem~1]{rademacher}.

Gauss obtains \eqref{Sqp} from the following \cite[p.~116, \S 5]{smith}:
if $x$ is irrational and $n$ is a positive integer, then
\begin{equation}
\sum_{k=1}^n [kx]+\sum_{k=1}^{[nx]} \left[ \frac{k}{x} \right]=n[nx],
\label{gauss}
\end{equation}
which he proves as follows.
If $\left[\frac{j}{x}\right] < k \leq \left[\frac{j+1}{x}\right]$, then $[kx]=j$. Therefore
\begin{eqnarray*}
\sum_{k=1}^n [kx]&=&\sum_{j=1}^{[nx]} j \left( \left[\frac{j+1}{x} \right]- \left[\frac{j}{x} \right] \right)\\
&=&n[nx]-\sum_{j=1}^{[nx]}  \left[\frac{j}{x} \right].
\end{eqnarray*}
Bachmann \cite[pp.~654--658, \S 4]{bachmann1904} surveys later work on sums similar to \eqref{gauss}; see also Dickson \cite[Chapter X]{dicksonI}.
If $m$ and $n$ are relatively prime, then
\[
\left\{R\left(\frac{m}{n}\right), R\left(\frac{2m}{n}\right), \ldots, R\left(\frac{(n-1)m}{n}\right)\right\} =\left\{\frac{1}{n},\frac{2}{n},\ldots,\frac{n-1}{n}\right\},
\]
and so
\[
\sum_{k=1}^{n-1} R\left( \frac{km}{n} \right)=\sum_{k=1}^{n-1} \frac{k}{n}=\frac{1}{n} \cdot \frac{(n-1)n}{2}=\frac{n-1}{2}.
\]
Hence
\begin{equation}
\sum_{k=1}^{n-1} \left[ \frac{km}{n} \right] = \sum_{k=1}^{n-1} \frac{km}{n} - \sum_{k=1}^{n-1} R\left( \frac{km}{n} \right)=\frac{(m-1)(n-1)}{2}.
\label{stern}
\end{equation}
There is also a simple lattice point counting argument \cite[p.~113, No.~18]{polyaII} that gives \eqref{stern}.

In 1849, Dirichlet \cite{mittleren} shows that 
\[
\sum_{k=1}^n d(k) = \sum_{k=1}^n \left[  \frac{n}{k} \right],
\]
where $d(n)$ denotes the number of positive divisors of an integer $n$. (This equality is Dirichlet's ``hyperbola method''.) He then proves that
\[
\sum_{k=1}^n \left[  \frac{n}{k} \right] = n \log n + (2\gamma -1)n +O(\sqrt{n}).
\]
Hardy and Wright \cite[pp.~264--265, Theorem 320]{wright} give a proof of this.
Finding the best possible error term in the estimate for $\sum_{k=1}^n d(k)$ is ``Dirichlet's divisor problem''.  Dirichlet cites  
the end of Section V Gauss's {\em Disquisitiones Arithmeticae}
as precedent for determining average magnitudes of arithmetic functions. (In Section V, Articles 302--304, of the {\em Disquisitiones Arithmeticae},
Gauss writes about averages of class numbers of binary quadratic forms, cf. \cite[Chapter VI]{dicksonIII}.)

Define $(x)$ to be $0$ if $x  \in \mathbb{Z}+\frac{1}{2}$; if $x \not \in \mathbb{Z}+\frac{1}{2}$ then there is
an integer $m_x$ for which $|x-m_x| < |x-n|$ for all integers $n \neq m_x$, and we define $(x)$ to be $x-m_x$. 
Riemann \cite[p.~105, \S 6]{riemann} defines 
\[
f(x)=\sum_{n=1}^\infty \frac{(nx)}{n^2};
\]
for any $x$, the series converges absolutely because $|(-nx)|<\frac{1}{2}$. 
Riemann states that if  $p$ and $m$ are relatively prime and $x=\frac{p}{2m}$,  then
\[
f(x^+)=\lim_{h \to 0^+} f(x+h) = f(x)-\frac{\pi^2}{16m^2}, \qquad f(x^-)=\lim_{h \to 0^-} f(x+h) = f(x)+\frac{\pi^2}{16m^2},
\]
thus
\[
f(x^-)-f(x^+)=\frac{\pi^2}{8m^2},
\]
and hence that $f$ is discontinuous at such points, and says that at all other points $f$ is continuous; see Laugwitz \cite[\S 2.1.1, pp. 183-184]{laugwitz},
Neuenschwander \cite{neuenschwander}, and Pringsheim \cite[p.~37]{pringsheim} about Riemann's work on pathological functions.
The role of pathological functions in the development of set theory is explained by Dauben \cite[Chapter 1, p. 19]{dauben} and Ferreirós \cite[Chapter V, \S 1, p. 152]{Labyrinth}.

For any interval $[a,b]$ and any $\sigma>0$, it is apparent from the above that there are only finitely many $x \in [a,b]$ for which
$f(x^-)-f(x^+)>\sigma$, and Riemann deduces from this that $f$ is  Riemann integrable on $[a,b]$; cf. Hawkins  \cite[p.~18]{hawkins} on the history
of Riemann integration.
Later in the same paper  \cite[p.~129, \S 13]{riemann}, Riemann states that the function
\[
x \mapsto \sum_{n=1}^\infty \frac{(nx)}{n},
\] 
is not Riemann integrable in any interval.

In 1897, Ces\`aro \cite{rosace} asks the following question (using the pseudonym, and anagram, ``Rosace'' \cite[p.~331]{perna}).
Let $\epsilon(x)=x-[x]-\frac{1}{2}$. Is the series
\begin{equation}
\sum_{n=1}^\infty \frac{\epsilon(nx)}{n}
\label{Bseries}
\end{equation}
convergent for all non-integer $x$? This is plausible because the expected value of $\epsilon(x)$ is 0. Landau \cite{landau} answers this question in 1901. Landau proves that if there is some $g$ such that 
\[
\sum_{k=1}^n [kx]=\frac{n(n+1)x}{2}-\frac{n}{2}+O(g(n))
\]
where $g$ is a nonnegative function such $g(n)=o(n)$ and such that $\sum_{n=1}^\infty \frac{g(n)}{n(n+1)}$ converges, then \eqref{Bseries} converges. And he proves that if $x$ is rational then \eqref{Bseries} diverges.
We
return to this series in \S \ref{dirichlet}.

Also in 1898, Franel \cite{franel1260} asks whether for irrational $x$ and for
$\epsilon>0$ we have
\[
\sum_{k=1}^n [kx]=\frac{n(n+1)x}{2}-\frac{n}{2}+O(n^\epsilon).
\]
Then in 1899, Franel \cite{franel1547} asks if we can do better than this: is the error term in fact $O(1)$?
Ces\`aro and Franel each contributed many problems to {\em L'Inter\-m\'ediaire des math\-\'em\-aticiens}, the periodical in which they posed their questions. Information
about Franel is given in \cite{kollros}.

Lerch \cite{lerch} answers Franel's questions in 1904. If $x$ is irrational and
$\frac{p}{q}$ is a convergent of $x$ (which we will define in \S \ref{preliminaries}), then using 
Theorem \ref{bestapproximation} (from \S \ref{preliminaries}) we can show that if $1 \leq k \leq q$ then $[k x]=[\frac{k p}{q}]$.
Lerch uses this and \eqref{stern} to show that if $x$ is irrational and $\frac{p}{q}$ is a convergent of $x$ then
\[
\sum_{k=1}^q [k x]=\frac{q(q+1)x}{2}-\frac{q}{2}+R, \qquad 0<R<\frac{1}{2}.
\]
Lerch states that if the continued fraction expansion of $x$ has bounded partial quotients (defined in \S \ref{preliminaries}) then, for any positive integer $n$,
\[
\sum_{k=1}^n [kx]=\frac{n(n+1)x}{2}-\frac{n}{2}+O(\log n).
\]
Lerch only gives a brief indication of the proof of this. This result is proved by Hardy and Littlewood in 1922 \cite[p.~24, Theorem B3]{latticeI},
and also in 1922 by Ostrowski \cite[p.~81]{ostrowski1922}.
On the other hand, Lerch also constructs examples of $x$ such that, for some positive integer $c$,
\[
\left| \sum_{k=1}^n [kx]-\frac{n(n+1)x}{2}+\frac{n}{2} \right| \gg n^{1-\frac{1}{c}}.
\]
Nevertheless, in 1909 Sierpinski \cite{sierpinski} proves that if $x$ is irrational then
\[
\sum_{k=1}^n [kx]=\frac{n(n+1)x}{2}-\frac{n}{2}+o(n).
\] 
A bibliography of Lerch's works is given in \cite{skrasek}. Lerch had written
earlier papers on Gauss sums, Fourier series, theta functions, and the class number; many of his papers are in Czech, but some of them are in French, several of which
were published in the Paris {\em Comptes rendus}.
Several of Lerch's papers are discussed in Cresse's survey  of the class number of binary quadratic forms \cite[Chapter VI]{dicksonIII}.

In 1899, a writer using the pseudonym ``Quemquaeris''  \cite{quemquaeris} (``quem quaeris'' = ``whom you seek'') asks if we can characterize $\phi(n)$ such that for
all  irrational $\theta$
the series
\[
\sum_{n=1}^\infty \frac{\phi(n)}{\sin n\pi \theta}
\]
converges.
 In particular, the writer asks if $\phi(n)=\frac{1}{n!}$ satisfies this. In the same year, de la Vall\'ee-Poussin \cite{valleepoussin}
answers this question. (There are also responses following de la Vall\'ee-Poussin's by Borel and Fabry.) For a given function $\phi(n)$,
de la Vall\'ee-Poussin shows that if we have $a_n>\frac{1}{\phi(q_{n-1})}$ for all $n$,  for $a_n$ the $n$th partial quotient of $\theta$ and $q_n$ the denominator of the $n$th convergent of $\theta$,
then the series
\[
\sum_{n=1}^\infty \frac{\phi(n)}{\sin n\pi \theta}
\]
will diverge. Hardy and Littlewood prove numerous results on similar series, e.g. for $\phi(n)=n^{-r}$ for real $r>1$ and for certain classes of $\theta$, in their papers on Diophantine approximation \cite{collectedpapers}. 
In 1931, Walfisz  \cite[p.~570, Hilfssatz 4]{walfisz} shows, following work of Behnke \cite[p.~289, \S 16]{behnke1924}, that for almost all irrational $x\in [0,1]$, if $\epsilon>0$ then
\[
\sum_{k=1}^n \frac{1}{\norm{k\theta}} = O(n (\log n)^{2+\epsilon}),
\]
where $\norm{x}=\min(R(x),1-R(x))$.
Walfisz's paper includes many results on related sums.

In 1916, Watson \cite{watson}
finds the following asymptotic series for
$S_n=\sum_{m=1}^{n-1} \csc\left( \frac{m\pi}{n} \right)$:
\[
\pi S_n \sim 2n\log(2n)+2n(\gamma-\log \pi)+\sum_{j=1}^\infty (-1)^j \frac{B_{2j}^2 ( 2^{2j}-2)\pi^{2j}}{j(2j)! n^{2j-1}},
\]
where $\gamma$ is Euler's constant and $B_j$ are the Bernoulli numbers.
Truncating the asymptotic series and rewriting
gives
\[
S_n=\frac{2n\log n}{\pi}+n \cdot \frac{2\log 2+2\gamma-2\log \pi}{\pi}+O\left(\frac{1}{n}\right).
\]
For example, computing $S_{1000}$ directly we get $S_{1000}=4477.593932\ldots$, and computing the right-hand side of the above formula  without the error term we obtain
$4477.594019\ldots$
A cleaner derivation of the asymptotic series using the Euler-Maclaurin summation formula is given
later by Williams \cite{williams}.

Early surveys of Diophantine approximation are given by Bohr and Cram\'er \cite[pp.~833--836, \S 39]{bohr} and Koksma \cite[pp.~102--110]{koksma}.
Hlawka and Binder \cite{hlawka} present the history of the initial years of the theory of uniform distribution.
Narkiewicz \cite[pp.~82--95, \S 2.5 and pp.~175--183 \S 3.5]{narkiewicz} gives additional historical references on Diophantine approximation.
The papers of Hardy and Littlewood on Diophantine approximation are reprinted in \cite{collectedpapers}. Perron \cite{perron}
and Brezinski \cite{brezinski} give historical references on continued fractions,
and there is  reliable material on the use of continued fractions by 17th century mathematicians in Whiteside \cite{whiteside}.
Fowler \cite{fowler}
presents a  prehistory of continued fractions in Greek mathematics.

\section{Preliminaries on continued fractions}
\label{preliminaries}
Let 
\[
\norm{x}=\min_{k \in \mathbb{Z}} |x-k|=\min(R(x),1-R(x)).
\]
Let $\mu$ be Lebesgue measure on $[0,1]$, and let $\Omega=[0,1] \setminus \mathbb{Q}$.

For positive integers $a_1,\ldots,a_n$, we define
\[
[a_1,\ldots,a_n]=\cfrac{1}{a_1+\cfrac{1}{\cdots+\cfrac{1}{a_{n-1}+\cfrac{1}{a_n}}}}.
\]
For example, $[1,1,1]=\frac{2}{3}$.

Let $\mathbb{N}$ be the set of positive integers. We call $a \in \mathbb{N}^\mathbb{N}$ a \textbf{continued fraction}, and we call
$a_n$ the \textbf{$n$th partial quotient} of $a$. If there is some $K>0$ such that $a_n \leq K$ for all $n$ then we say that $a$ has bounded partial quotients.
We call
$[a_1,\ldots,a_n]$ the \textbf{$n$th convergent} of $a$. For $n \geq 1$ let
\[
\frac{p_n}{q_n}=[a_1,\ldots,a_n],
\]
with $p_n,q_n$ positive integers that are relatively prime, and set
\[
p_0=0,\qquad q_0=1.
\]
One can show by induction  \cite[p.~70, Lemma 3.1]{einsiedler} that for $n \geq 1$ we have
\begin{equation}
\begin{pmatrix}p_n&p_{n-1}\\q_n&q_{n-1}\end{pmatrix}=\begin{pmatrix}0&1\\1&0\end{pmatrix}\begin{pmatrix}a_1&1\\1&0\end{pmatrix}
\cdots \begin{pmatrix}a_n&1\\1&0\end{pmatrix}.
\label{matrixformula}
\end{equation}
We have
\[
p_1=1, \qquad q_1=a_1,
\]
 and from \eqref{matrixformula} we get for all $n \geq 1$ that
\[
p_{n+1}=a_{n+1}p_n+p_{n-1},
\]
and
\[
q_{n+1}=a_{n+1}q_n+q_{n-1}.
\]
Since the $a_n$ are positive integers we get by induction that for all $n \geq 1$,
\begin{equation}
p_n \geq 2^{(n-1)/2}, \qquad q_n \geq 2^{(n-1)/2}.
\label{qnbound}
\end{equation}
In fact, setting $F_1=1, F_2=1$, $F_{n+1}=F_n+F_{n-1}$ for $n \geq 2$, with $F_n$ the \textbf{$n$th
Fibonacci number},
as $a_n \geq 1$ we check by induction that
\[
p_n \geq F_n,\qquad q_n \geq F_{n+1}.
\] 
Taking determinants of \eqref{matrixformula} gives us for all $n \geq 1$ that
\begin{equation}
p_nq_{n-1}-p_{n-1}q_n=(-1)^{n+1},
\label{determinants}
\end{equation}
and then by induction we have for all $n \geq 1$,
\[
\frac{p_n}{q_n}=\sum_{k=1}^n \frac{(-1)^{k+1}}{q_{k-1}q_k}.
\]
For any $a \in \mathbb{N}^\mathbb{N}$, as $n \to \infty$ this sequence of sums converges and we denote its limit by $v(a)$.
We have for all $n \geq 1$,
\[
v(a)-\frac{p_n}{q_n}=\sum_{k=n+1}^\infty  \frac{(-1)^{k+1}}{q_{k-1}q_k}.
\]
Since the right hand side is an alternating series we obtain for $n \geq 1$,
\begin{equation}
\left|  v(a)-\frac{p_n}{q_n} \right| < \frac{1}{q_n q_{n+1}},
\label{upperbound}
\end{equation}
and 
\begin{equation}
\left|  v(a)-\frac{p_n}{q_n} \right| > \frac{1}{q_nq_{n+1}}-\frac{1}{q_{n+1}q_{n+2}}=\frac{a_{n+2}}{q_n q_{n+2}} \geq \frac{1}{q_n(q_{n+1}+q_n)},
\label{lowerbound}
\end{equation}
and
\begin{equation}
\frac{p_2}{q_2}<\frac{p_4}{q_4}<\cdots<v(a)<\cdots<\frac{p_3}{q_3}<\frac{p_1}{q_1}.
\label{signs}
\end{equation}

For $x \in \Omega$, we say that $\frac{p}{q} \in \mathbb{Q}$, $q>0$, is a \textbf{best approximation to $x$}
if $\norm{qx} = |qx-p|$ and $\norm{q' x} > \norm{qx}$ for $1 \leq q'<q$.
The following theorem shows in particular that the convergents of a continued fraction $a$
are  best approximations to $v(a)$  \cite[p.~22, Chapter 2, \S 3, Theorem 1]{rockett}.

\begin{theorem}[Best approximations]
Let $a \in \mathbb{N}^{\mathbb{N}}$. For any $n \geq 1$,
\[
\norm{q_n v(a)} = |p_n-q_n v(a)|.
\]
If $1 \leq q < q_{n+1}$, then for any $p \in \mathbb{Z}$,
\[
|qv(a)-  p| \geq |p_n-q_n v(a)|.
\]
\label{bestapproximation}
\end{theorem}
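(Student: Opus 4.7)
The plan is to treat the two assertions separately. The first equality $\norm{q_n v(a)} = |p_n - q_n v(a)|$ will follow quickly from the a priori bound \eqref{upperbound}, while the inequality $|qv(a) - p| \geq |p_n - q_n v(a)|$ for $1 \leq q < q_{n+1}$ will follow by expressing $(p,q)$ as an integer combination of the vectors $(p_n, q_n)$ and $(p_{n+1}, q_{n+1})$, using the unimodularity statement \eqref{determinants} together with the alternation of the convergents about $v(a)$ given by \eqref{signs}.

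For the first assertion, \eqref{upperbound} gives $|p_n - q_n v(a)| < 1/q_{n+1}$. Using $q_0 = 1$, $q_1 = a_1 \geq 1$, and the recursion $q_{n+1} = a_{n+1} q_n + q_{n-1} \geq q_n + q_{n-1}$, one has $q_{n+1} \geq 2$ for all $n \geq 1$, so $|p_n - q_n v(a)| < 1/2$. Since every other integer is at distance at least $1$ from $p_n$ and hence greater than $1/2$ from $q_n v(a)$, the integer $p_n$ is the unique nearest integer to $q_n v(a)$, and the desired equality follows.

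For the second assertion, fix $p \in \mathbb{Z}$ and $q$ with $1 \leq q < q_{n+1}$. By \eqref{determinants}, $p_n q_{n+1} - p_{n+1} q_n = \pm 1$, so the matrix with columns $\begin{pmatrix}p_n \\ q_n\end{pmatrix}$ and $\begin{pmatrix}p_{n+1}\\ q_{n+1}\end{pmatrix}$ has an inverse with integer entries, yielding unique $\alpha, \beta \in \mathbb{Z}$ with $p = \alpha p_n + \beta p_{n+1}$ and $q = \alpha q_n + \beta q_{n+1}$. If $\alpha = 0$ then $q = \beta q_{n+1}$ admits no integer solution with $1 \leq q < q_{n+1}$; if $\beta = 0$ then $q = \alpha q_n$ forces $\alpha \geq 1$ and the inequality is immediate. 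Otherwise $\alpha$ and $\beta$ must have opposite signs, since equal (nonzero) signs would give $|q| \geq q_n + q_{n+1} > q_{n+1}$. Now \eqref{signs} shows that $v(a)$ lies strictly between $p_n/q_n$ and $p_{n+1}/q_{n+1}$, so $q_n v(a) - p_n$ and $q_{n+1} v(a) - p_{n+1}$ have opposite signs, and therefore $\alpha(q_n v(a) - p_n)$ and $\beta(q_{n+1} v(a) - p_{n+1})$ share a sign. Consequently,
\[
|qv(a) - p| = |\alpha|\,|q_n v(a) - p_n| + |\beta|\,|q_{n+1} v(a) - p_{n+1}| \geq |q_n v(a) - p_n|.
\]
The main obstacle is the sign bookkeeping in this last step: one must verify carefully that both subcases $(\alpha>0, \beta<0)$ and $(\alpha<0, \beta>0)$ combine with the alternation pattern in \eqref{signs} to produce a sum without cancellation, so that the triangle inequality becomes an equality of absolute values.
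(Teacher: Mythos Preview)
Your proof is correct and follows essentially the same approach as the paper's own proof: both establish the first assertion via \eqref{upperbound} together with $q_{n+1}\geq 2$, and both prove the second assertion by using the unimodularity relation \eqref{determinants} to write $(p,q)$ as an integer combination of $(p_n,q_n)$ and $(p_{n+1},q_{n+1})$, then carrying out the same case analysis and the same sign argument from \eqref{signs}. The only differences are cosmetic---the paper names the coefficients $\mu,\nu$ in the opposite order and phrases the $\alpha=0$ contradiction via irrationality of $v(a)$ rather than directly from $q=\beta q_{n+1}$---but the logical structure is identical.
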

\begin{proof}
By \eqref{upperbound}, $|q_n v(a) - p_n| < \frac{1}{q_{n+1}}$. 
But
\[
q_{n+1} \geq q_2 = a_2q_1+q_0 \geq q_1 + q_0 = a_1 + 1 \geq 2.
\]
So $|q_n v(a) - p_n| < \frac{1}{2}$, which means 
$\norm{q_n v(a)} = |q_n v(a) - p_n|$.  

Write $x=v(a)$ and
let $A = \begin{pmatrix} p_{n+1}&p_n\\ q_{n+1}&q_n \end{pmatrix}$. Applying
\eqref{determinants},
\[
\det A = p_{n+1}q_n - p_n q_{n+1} = (-1)^n. 
\] 
Let 
\[
\begin{pmatrix}\mu \\ \nu \end{pmatrix} = A^{-1} \begin{pmatrix} p\\ q \end{pmatrix}
=\frac{1}{\det A} \begin{pmatrix} q_n&-p_n\\ -q_{n+1}&p_{n+1} \end{pmatrix}  \begin{pmatrix} p\\ q \end{pmatrix}
=(-1)^n  \begin{pmatrix} q_n&-p_n\\ -q_{n+1}&p_{n+1} \end{pmatrix}  \begin{pmatrix} p\\ q \end{pmatrix}.
\]
Then
\[
q x - p = (\mu q_{n+1} + \nu q_n)x - (p_{n+1} \mu + p_n \nu)
=\mu (q_{n+1} x - p_{n+1}) + \nu(q_n x - p_n)
\]
and
\[
\begin{pmatrix}p\\ q\end{pmatrix}
=A \begin{pmatrix}\mu\\ \nu\end{pmatrix}
= \begin{pmatrix} p_{n+1}&p_n\\ q_{n+1}&q_n \end{pmatrix}\begin{pmatrix}\mu\\ \nu\end{pmatrix}
=\begin{pmatrix}p_{n+1} \mu + p_n \nu\\ q_{n+1}\mu+q_n \nu\end{pmatrix}.
\]
In particular, $\mu,\nu \in \mathbb{Z}$.
Suppose by contradiction that $\nu=0$. Then $(q-\mu q_{n+1})x = p-\mu p_{n+1}$, and as $x \not \in \mathbb{Q}$ it must then be that 
$q=\mu q_{n+1}$ and $p=\mu p_{n+1}$. But $q=\mu q_{n+1}$, $1 \leq q < q_{n+1}$, and $\mu \in \mathbb{Z}$ are together a contradiction. Therefore
$\nu \neq 0$. 
Either $\mu=0$ or $\mu \neq 0$. For $\mu =0$,
\[
|qx-p| = |\nu| |q_nx-p_n| \geq |q_nx-p_n|,
\]
which is the claim. For $\mu \neq 0$ we use the fact
$q = q_{n+1} \mu + q_n \nu$ and $1 \leq q < q_{n+1}$. If $\mu,\nu>0$ then $q<q_{n+1}$ is contradicted,
and if $\mu,\nu<0$ then $q \geq 1$ is contradicted. Therefore $\mu$ and $\nu$ have different signs, say $\mu=(-1)^N |\mu|$ and 
$\nu=(-1)^{N+1} |\nu|$.
Furthermore, we get from \eqref{signs} that
\[
\sgn (q_n x - p_n) = (-1)^n,
\qquad \sgn (q_{n+1} x - p_{n+1}) = (-1)^{n+1}.
\]
Therefore
\begin{align*}
qx-p&=\mu (q_{n+1} x - p_{n+1}) + \nu(q_n x - p_n)\\
&=(-1)^N |\mu| \cdot (-1)^{n+1} |q_{n+1}x-p_{n+1}|
+ (-1)^{N+1} |\nu| \cdot (-1)^n |q_nx-p_n|,
\end{align*}
hence
\[
|qx-p| =  |\mu| |q_{n+1}x-p_{n+1}| +  |\nu| |q_nx-p_n| \geq
  |\nu| |q_nx-p_n| \geq |q_nx-p_n|,
\]
which is the claim.
\end{proof}

The above theorem says, a fortiori, that the convergents of $a$ are best approximations to $v(a)$. It can also be proved
that if $\frac{p}{q} \in \mathbb{Q}$, $q>0$, is a best approximation to $v(a)$ then $\frac{p}{q}$ is a convergent of $a$ \cite[p.~9, Theorem 6]{MR0209227}.
Cassels \cite[p.~2, Chapter I]{cassels} works out the theory of continued 
fractions according to this point of view. Similarly, Milnor \cite[p.~234, Appendix C]{milnor} works out the theory of continued fractions in the language of rotations of the unit circle.

We define the \textbf{Gauss transformation} $T:\Omega \to \Omega$ by
 $T(x)=R\left(\frac{1}{x}\right)$ for $x \in \Omega$, and we define $\Phi:\Omega \to \mathbb{N}^\mathbb{N}$ by
\[
(\Phi(x))_n=\left[ \frac{1}{T^{n-1}(x)} \right],\qquad n \geq 1.
\]
One can check that if $a \in \mathbb{N}^\mathbb{N}$ then $v(a) \in \Omega$ \cite[p.~73, Lemma 3.2]{einsiedler}. (Namely, the value of a nonterminating continued fraction is irrational.)
One can prove that $v:\mathbb{N}^\mathbb{N} \to \Omega$ is injective \cite[p.~75, Lemma 3.4]{einsiedler}, and
 for $x \in \Omega$ that \cite[p.~78, Lemma 3.6]{einsiedler}
 \[
 (v \circ \Phi)(x)=x.
 \]
 Therefore $\Phi:\Omega \to \mathbb{N}^\mathbb{N}$ is a bijection. Moreover,
 $\Phi$ is a homeomorphism, when $\mathbb{N}$ has discrete topology, $\mathbb{N}^\mathbb{N}$ has the product topology, and $\Omega$ has the subspace topology
 inherited from $\mathbb{R}$ \cite[p.~86, Exercise 3.2.2]{einsiedler}. That $\mathbb{N}^\mathbb{N}$ and $\Omega$ are homeomorphic
 can also be proved without using continued fractions \cite[p.~106, Theorem 3.68]{hitchhiker}.
 In descriptive set theory,
 the topological space $\mathscr{N}=\mathbb{N}^\mathbb{N}$
 is  called the \textbf{Baire space}, and the Alexandrov-Urysohn theorem states that $\mathscr{N}$ has the universal property  that any nonempty
 Polish space that is zero-dimensional (there is a basis of clopen sets for the topology) and all of whose compact subsets have empty
 interior is homeomorphic to $\mathscr{N}$  \cite[p.~37, Theorem 7.7]{kechris}.
 Some of Baire's work on $\mathscr{N}$ is described in \cite[pp.~119--120]{johnson} and \cite[pp.~349, 372]{arboleda}.

For $I=[0,1]$ and for $T:I \to I$, $T(x)=R(1/x)$ for $x>0$ and $T(0)=0$. For $k \geq 1$ let 
$I_k = \left( \frac{1}{k+1},\frac{1}{k} \right)$, so if $x \in I_k$ then $T(x) = x^{-1}-k$. Then for
$x \in I=[0,1]$,
\[
T(x) = \sum_{k=1}^\infty 1_{I_k}(x) (x^{-1}-k).
\]
For $S = \{0\} \cup \{k^{-1} : k \geq 1\}$, $I \setminus S = \bigcup_{k \geq 1} I_k$, and for $x \in I \setminus S$,
\[
T'(x) = - \sum_{k=1}^\infty 1_{I_k}(x) x^{-2},
\]
and for $x \in I_k$, $k^2 < |T'(x)| < (k+1)^2$. Differentiability and dynamical properties of the Gauss transformation are worked out
by Cornfeld, Fomin and Sinai \cite[pp.~165--177, Chapter 7, \S 4]{cornfeld},
as an instance of \textbf{piecewise monotonic} transformations.

For each $n \geq 1$ we define $a_n:\Omega \to \mathbb{N}$ by $a_n(x)=(\Phi(x))_n$.
For example, $e-2 \in \Omega$, and it is known \cite[p.~74, Theorem 2]{MR0209227} 
that for $k \geq 1$,
\[
a_{3k}(e-2)=a_{3k-2}(e-2)=1 \quad \textrm{and} \quad a_{3k-1}(e-2)=2k.
\]
The pattern for the continued fraction expansion of $e$ seems first to have been worked out by
Roger Cotes in 1714 \cite{fowler1992}, and was  later proved by Euler using a method involving the Riccati equation \cite{cretney}.

For $n \geq 1$ and $i \in \mathbb{N}^n$, 
let 
\[
I_n(i) = \{\omega \in \Omega: a_k(x) = i_k, 1 \leq k \leq n\}.
\]
For $x \in I_n(i)$, 
\[
\frac{p_n(x)}{q_n(x)} = [i_1,\ldots,i_n],
\qquad \frac{p_{n-1}(x)}{q_{n-1}(x)} = [i_1,\ldots,i_{n-1}].
\]
The following is an expression for the sets $I_n(i)$ \cite[p.~18, Theorem 1.2.2]{iosifescu}.

\begin{theorem}
Let $n \geq 1$, $i \in \mathbb{Z}_{\geq 1}^n$,
and for $p_n=p_n(x)$, $q_n=q_n(x)$, $x \in I_n(i)$,
\[
u_n(i) = \begin{cases}
\frac{p_n+p_{n-1}}{q_n+q_{n-1}}&\textrm{$n$ odd}\\
\frac{p_n}{q_n}&\textrm{$n$ even}
\end{cases}
\]
and
\[
v_n(i) = \begin{cases}
\frac{p_n}{q_n}&\textrm{$n$ odd}\\
\frac{p_n+p_{n-1}}{q_n+q_{n-1}}&\textrm{$n$ even}.
\end{cases}
\]
Then
\[
I_n(i) = \Omega \cap (u_n(i),v_n(i)).
\]
\end{theorem}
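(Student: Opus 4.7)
The plan is to realize $I_n(i)$ as the image of $(0,1)$ under a single M\"obius transformation and then match its endpoints to those of $(u_n(i),v_n(i))$. For $x\in I_n(i)$, iterating the identity $x=\tfrac{1}{a_1(x)+T(x)}$ gives
\[
x=[i_1,\ldots,i_n,1/T^n(x)],
\]
and the matrix product in \eqref{matrixformula} extends verbatim when the last integer entry is replaced by the positive real $\xi=1/T^n(x)$, yielding
\[
x=\frac{\xi p_n+p_{n-1}}{\xi q_n+q_{n-1}}=\frac{p_n+T^n(x)\,p_{n-1}}{q_n+T^n(x)\,q_{n-1}}=\phi(T^n(x)),
\]
where $\phi(y)=\dfrac{p_n+yp_{n-1}}{q_n+yq_{n-1}}$. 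Using \eqref{determinants},
\[
\phi'(y)=\frac{p_{n-1}q_n-p_nq_{n-1}}{(q_n+yq_{n-1})^2}=\frac{(-1)^n}{(q_n+yq_{n-1})^2},
\]
so $\phi$ is strictly monotone on $[0,1]$, increasing when $n$ is even and decreasing when $n$ is odd. Its endpoint values are $\phi(0)=p_n/q_n$ and $\phi(1)=(p_n+p_{n-1})/(q_n+q_{n-1})$, and matching these against the definitions of $u_n(i)$ and $v_n(i)$ shows $\phi((0,1))=(u_n(i),v_n(i))$. Since $T^n(x)\in\Omega\subset(0,1)$ for every $x\in\Omega$, this already gives the inclusion $I_n(i)\subseteq\Omega\cap(u_n(i),v_n(i))$.

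For the reverse inclusion, fix $x\in\Omega\cap(u_n(i),v_n(i))$. Monotonicity and continuity of $\phi$ produce a unique $y\in(0,1)$ with $\phi(y)=x$; because $\phi$ is a M\"obius map with integer coefficients, $y\notin\mathbb{Q}$, so $y\in\Omega$. Write $\Phi(y)=(b_1,b_2,\ldots)$ with convergents $P_k/Q_k\to y$, and form $\tilde a=(i_1,\ldots,i_n,b_1,b_2,\ldots)\in\mathbb{N}^\mathbb{N}$. Concatenating the two corresponding blocks of matrix factors in \eqref{matrixformula} gives
\[
[i_1,\ldots,i_n,b_1,\ldots,b_k]=\frac{p_nQ_k+p_{n-1}P_k}{q_nQ_k+q_{n-1}P_k}=\frac{p_n+p_{n-1}(P_k/Q_k)}{q_n+q_{n-1}(P_k/Q_k)},
\]
which tends to $\phi(y)=x$ as $k\to\infty$. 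Hence $v(\tilde a)=x$, and the injectivity of $v:\mathbb{N}^\mathbb{N}\to\Omega$ recalled earlier forces $\Phi(x)=\tilde a$, so $a_k(x)=i_k$ for $1\leq k\leq n$, i.e.\ $x\in I_n(i)$.

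The main obstacle is the concatenation identity used in the reverse inclusion: verifying $v(\tilde a)=\phi(y)$ requires multiplying together the two blocks of $2\times 2$ matrices from \eqref{matrixformula} and passing to the limit in the resulting convergents, with careful bookkeeping at the seam so that the roles of $p_n,p_{n-1},q_n,q_{n-1}$ and $P_k,P_{k-1},Q_k,Q_{k-1}$ align correctly. An alternative organization would induct on $n$ using $I_n(i_1,\ldots,i_n)=\{x\in\Omega:a_1(x)=i_1,\ T(x)\in I_{n-1}(i_2,\ldots,i_n)\}$, but the M\"obius picture makes the mediant form of the endpoints most transparent.
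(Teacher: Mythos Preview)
Your argument is correct. The M\"obius map $\phi(y)=\dfrac{p_n+yp_{n-1}}{q_n+yq_{n-1}}$ is exactly the right object, your computation of $\phi'$ via \eqref{determinants} is fine, and the concatenation identity you worried about does check out: because the product in \eqref{matrixformula} starts with the swap matrix $\bigl(\begin{smallmatrix}0&1\\1&0\end{smallmatrix}\bigr)$, the block $\prod_{j=1}^{k}\bigl(\begin{smallmatrix}b_j&1\\1&0\end{smallmatrix}\bigr)$ equals $\bigl(\begin{smallmatrix}Q_k&Q_{k-1}\\P_k&P_{k-1}\end{smallmatrix}\bigr)$, and multiplying on the left by $\bigl(\begin{smallmatrix}p_n&p_{n-1}\\q_n&q_{n-1}\end{smallmatrix}\bigr)$ gives precisely the numerator and denominator $p_nQ_k+p_{n-1}P_k$, $q_nQ_k+q_{n-1}P_k$ you wrote down. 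The appeal to injectivity of $v$ together with $v\circ\Phi=\mathrm{id}$ then closes the reverse inclusion cleanly.

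As for comparison: the paper does not give its own proof of this statement but simply cites \cite[p.~18, Theorem~1.2.2]{iosifescu}. Your M\"obius-transformation approach is the standard one (and is essentially what one finds in Iosifescu--Kraaikamp and most treatments); the inductive alternative you mention in your last paragraph would also work and is sometimes preferred because it avoids the matrix bookkeeping at the seam, but your direct route has the virtue of making the mediant endpoints $\phi(0)$ and $\phi(1)$ appear immediately.
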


It follows from the above that for $i \in \mathbb{N}^n$, $n \geq 1$,
\[
\mu(I_n(i)) = \frac{1}{q_n(q_n+q_{n-1})}.
\]

\section{Diophantine conditions}
For real $\tau,\gamma>0$ let
\begin{align*}
\mathcal{D}(\tau,\gamma) &=\bigcap_{q \in \mathbb{Z}_{\geq 1}, p \in \mathbb{Z}} \left\{x \in [0,1]: \left|x-\frac{p}{q}\right| \geq \gamma q^{-\tau} \right\}\\
&=\bigcap_{q \in \mathbb{Z}_{\geq 1}} \left\{x \in [0,1]: \norm{qx} \geq \gamma q^{-\tau+1} \right\},
\end{align*}
and let
\[
\mathcal{D}(\tau) = \bigcup_{\gamma>0} \mathcal{D}(\tau,\gamma).
\]

We relate the sets $\mathcal{D}(\tau)$ and continued fractions expansions  \cite[p.~241, Lemma C.6]{milnor}, cf.  \cite[p.~130, Proposition 2.4]{yoccoz}.

\begin{lemma}
For $\tau>0$ and $x \in \Omega$, $x \in \mathcal{D}(\tau)$ if and only if there is some
$C=C(x)>0$ such that $q_{n+1}(x) \leq C q_n(x)^{\tau-1}$ for all $n \geq 1$.
\label{Dtau}
\end{lemma}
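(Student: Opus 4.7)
The plan is to convert the global condition $x\in\mathcal{D}(\tau)$, which controls $\|qx\|$ for every positive integer $q$, into a condition on the convergent denominators $q_n$, by pivoting on the two-sided estimate
\[
\frac{1}{q_n+q_{n+1}}<\|q_nx\|<\frac{1}{q_{n+1}},
\]
which I would derive immediately from \eqref{upperbound} and \eqref{lowerbound} together with the fact $\|q_nx\|=|p_n-q_nx|$ from Theorem~\ref{bestapproximation}. Once this bridge estimate is in hand, both directions amount to comparing it with the two inequalities $\|qx\|\geq \gamma q^{1-\tau}$ and $q_{n+1}\leq Cq_n^{\tau-1}$, with $C$ and $1/\gamma$ essentially playing the same role.

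For the forward direction I would assume $x\in\mathcal{D}(\tau,\gamma)$ for some $\gamma>0$, specialize to $q=q_n$, and combine $\|q_nx\|\geq \gamma q_n^{1-\tau}$ with $\|q_nx\|<1/q_{n+1}$ to read off $q_{n+1}<\gamma^{-1}q_n^{\tau-1}$; then $C=1/\gamma$ works.

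For the converse direction I would assume $q_{n+1}\leq Cq_n^{\tau-1}$ for all $n\geq 1$ and bound $\|qx\|$ from below for an arbitrary $q\geq 1$. If $q\geq q_1$ there is a unique $n\geq 1$ with $q_n\leq q<q_{n+1}$, and Theorem~\ref{bestapproximation} gives $\|qx\|=\min_p|qx-p|\geq|p_n-q_nx|=\|q_nx\|$; combining this with the bridge estimate, the hypothesis $q_{n+1}\leq Cq_n^{\tau-1}$, and $q_n\leq q_{n+1}$ yields
\[
\|qx\|\geq\|q_nx\|>\frac{1}{q_n+q_{n+1}}\geq\frac{1}{2q_{n+1}}\geq\frac{1}{2C\,q_n^{\tau-1}}\geq\frac{1}{2C}\,q^{1-\tau},
\]
where the last step uses $q_n\leq q$ and $\tau\geq 1$. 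The finitely many $q$ with $1\leq q<q_1$ (an empty range when $a_1=1$) are handled by brute force: each such $\|qx\|$ is strictly positive because $x\in\Omega$, so shrinking $\gamma$ to the minimum of $1/(2C)$ and $\|qx\|\,q^{\tau-1}$ over this finite set produces a valid $\gamma$ with $x\in\mathcal{D}(\tau,\gamma)$.

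The boundary cases $\tau\leq 1$ are dispatched by noting that for $x\in\Omega$ we have $q_n\to\infty$, while $\|q_nx\|<1/q_{n+1}\to 0$, so both sides of the claimed equivalence fail for every $x\in\Omega$ in this range and the lemma holds vacuously. The main conceptual obstacle is this boundary issue together with the step $q_n^{\tau-1}\leq q^{\tau-1}$, which requires $\tau\geq 1$ to go in the right direction; the dull bookkeeping obstacle is the separate treatment of $q<q_1$, which is unavoidable because Theorem~\ref{bestapproximation} is stated only for $n\geq 1$.
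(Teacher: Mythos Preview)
Your proof is correct and follows the same route as the paper's: both directions hinge on comparing $\|q_nx\|$ with $1/q_{n+1}$ and $1/(q_n+q_{n+1})$ via Theorem~\ref{bestapproximation} and \eqref{upperbound}--\eqref{lowerbound}, and the converse bounds an arbitrary $q$ by locating it in $[q_n,q_{n+1})$. You are more careful than the paper on two points it glosses over---the finite range $1\leq q<q_1$ and the vacuous case $\tau\leq 1$ (the paper's final inequality $q_n^{1-\tau}\geq q^{1-\tau}$ tacitly uses $\tau\geq 1$)---but the core argument is identical.
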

\begin{proof}
For $x \in \Omega$, write $q_n=q_n(x)$.
By \eqref{lowerbound}, $\norm{q_n x} > \frac{1}{q_{n+1}+q_n}$, 
and by \eqref{upperbound}, $\norm{q_n x} < \frac{1}{q_{n+1}}$.
Suppose $x \in \mathcal{D}(\tau)$, so there is some $\gamma>0$ such that $x \in \mathcal{D}(\tau,\gamma)$. 
Then
\[
q_{n+1} < \frac{1}{\norm{q_n x}} \leq \gamma^{-1} q_n^{\tau-1}.
\]

Suppose $q_{n+1} \leq C q_n^{\tau-1}$ for all $n \geq 1$. For $q \in \mathbb{Z}_{\geq 1}$, take $q_n \leq q < q_{n+1}$.
Using Theorem \ref{bestapproximation},
\[
\norm{qx} \geq \norm{q_n x} > \frac{1}{q_{n+1}+q_n} > \frac{1}{2q_{n+1}} \geq \frac{1}{2} C^{-1} q_n^{-\tau+1}
\geq \frac{1}{2C} \cdot q^{-\tau+1},
\]
which means that $x \in \mathcal{D}(\tau,\frac{1}{2C})$. 
\end{proof}

For $K$ a positive integer, let 
\[
\mathcal{B}_K = \{x \in \Omega: \textrm{$a_n(x) \leq K$ for all $n \geq 1$}\},
\]
so $\mathcal{B}=\bigcup_{K \geq 1} \mathcal{B}_K$ is the set of those $x \in \Omega$ with bounded partial quotients.

\begin{lemma}
For $x \in \Omega$, $x \in \mathcal{D}(2)$ if and only if $x \in \mathcal{B}$.
\end{lemma}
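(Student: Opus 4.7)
The plan is to deduce this directly from Lemma \ref{Dtau} (specialized to $\tau=2$) together with the recurrence $q_{n+1}(x) = a_{n+1}(x) q_n(x) + q_{n-1}(x)$. By Lemma \ref{Dtau}, $x \in \mathcal{D}(2)$ if and only if there is some $C>0$ with $q_{n+1}(x) \leq C q_n(x)$ for all $n \geq 1$. So the task reduces to showing that a bound $q_{n+1}/q_n \leq C$ is equivalent to a bound on the partial quotients.

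For the direction $\mathcal{B} \Rightarrow \mathcal{D}(2)$: suppose $a_n(x) \leq K$ for all $n$. Since $q_{n-1} \leq q_n$ (partial quotients are positive integers, so the sequence $q_n$ is nondecreasing), the recurrence gives
\[
q_{n+1} = a_{n+1} q_n + q_{n-1} \leq K q_n + q_n = (K+1) q_n,
\]
so we can apply Lemma \ref{Dtau} with constant $C = K+1$.

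For the converse direction $\mathcal{D}(2) \Rightarrow \mathcal{B}$: suppose $q_{n+1} \leq C q_n$ for all $n \geq 1$. From the recurrence $a_{n+1} q_n = q_{n+1} - q_{n-1} \leq q_{n+1} \leq C q_n$, so $a_{n+1} \leq C$ for all $n \geq 1$, i.e., $a_n(x) \leq C$ for all $n \geq 2$. Taking $K = \max(a_1(x), \lceil C \rceil)$ gives $a_n(x) \leq K$ for all $n \geq 1$, hence $x \in \mathcal{B}_K \subseteq \mathcal{B}$.

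There is no real obstacle here; the only things to be careful about are remembering that $q_0 = 1$ and $q_n$ is nondecreasing (which controls the additive $q_{n-1}$ term), and handling $a_1$ separately in the converse direction since the recurrence relating $a_{n+1}$ to ratios of $q$'s only controls $a_n$ for $n \geq 2$.
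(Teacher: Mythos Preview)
Your proof is correct. The paper's own proof does not invoke Lemma~\ref{Dtau} but instead argues directly from the definition of $\mathcal{D}(2,\gamma)$, using \eqref{upperbound}, \eqref{lowerbound}, and Theorem~\ref{bestapproximation}; in effect it re-derives the $\tau=2$ case of Lemma~\ref{Dtau} inside the argument. Your route is more economical because it factors that work through the already-proved Lemma~\ref{Dtau}, reducing the statement to the elementary equivalence between a uniform bound on $q_{n+1}/q_n$ and a uniform bound on the partial quotients, which you handle cleanly via the recurrence $q_{n+1}=a_{n+1}q_n+q_{n-1}$. The underlying idea in both arguments is the same.
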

\begin{proof}
Write $a_n=a_n(x)$ and $q_n=q_n(x)$.
If $x \in \mathcal{D}(2)$ then there is some $\gamma>0$ such that $x \in \mathcal{D}(2,\gamma)$, hence for $n \geq 1$,
\[
q_{n+1}<\frac{1}{\norm{q_n x}} \leq \gamma^{-1} q_n.
\]
Now, $q_{n+1} = a_{n+1} q_n + q_{n-1}$ for $n \geq 1$, so
\[
a_{n+1}  < q_{n+1} q_n^{-1}  < q_n^{-1} \cdot \gamma^{-1} q_n
=\gamma^{-1},
\]
which shows that $x$ has bounded partial quotients.

If $x \in \mathcal{B}_K$, let $q \in \mathbb{Z}_{\geq 1}$ and let $q_n \leq q < q_{n+1}$. 
Using Theorem \ref{bestapproximation} and then \eqref{lowerbound},
\[
\norm{qx} \geq \norm{q_n x} > \frac{1}{q_{n+1}+q_n}
=\frac{1}{a_{n+1} q_n + q_{n-1} + q_n}
>\frac{1}{(a_{n+1}+2)q_n}.
\]
As $x \in \mathcal{B}_K$,
\[
\norm{qx} > \frac{1}{(K+2) q_n} \geq \frac{1}{K+2} q^{-1},
\]
which means that $x \in \mathcal{D}(2,\frac{1}{K+2})$. 
\end{proof}

A complex number $\alpha$ is called an \textbf{algebraic number of degree $d$}, $d \geq 0$, if there is some polynomial $f \in \mathbb{Z}[x]$ with degree $d$ such that 
$f(\alpha)=0$ and if $g \in \mathbb{Z}[x]$ has degree $<d$ and $g(\alpha)=0$ then $g=0$. 
An algebraic number number of degree $2$ is called a \textbf{quadratic irrational}. 
Let $\alpha \in \Omega$. It was proved by Euler \cite[p.~144, Theorem 176]{wright} that if there is some $p>0$ and some $L$ such that $a_{l+p}(\alpha)=a_l(\alpha)$ for all $l \geq L$ then $\alpha$ is a quadratic irrational The converse of this was proved by Lagrange \cite[p.~144, Theorem 177]{wright},
namely that a quadratic irrational has eventually periodic 
partial quotients.
For example, $\alpha=\sqrt{11}-3 \in \Omega$ is a quadratic irrational, being a root of 
$x^2+6x-2$,
 and one works out
that $a_1(\alpha)=3, a_2(\alpha)=6$, and that $a_{l+2}(\alpha)=a_l(\alpha)$ for $l \geq 1$. 
In particular, if $\alpha \in \Omega$ is a quadratic irrational, then $\alpha$ has bounded partial quotients.

Liouville \cite[p.~161, Theorem 191]{wright} proved that if $x \in \Omega$ is an algebraic number of degree $d \geq 2$,
then $x \in \mathcal{D}(d)$. 
The \textbf{Thue-Siegel-Roth theorem} \cite[p.~55, Theorem 1.23]{feldman} states that if $x \in \Omega$ is an algebraic number,
then for any
$\delta>0$ there  is some $q_\delta \in \mathbb{Z}_{\geq 1}$ such that for all $q \geq q_\delta$,
\[
\norm{qx} \geq q^{-1-\delta}.
\]
See Schmidt  \cite[p.~195, Theorem 2B]{schmidt}.

\section{Sums of reciprocals}
\label{reciprocalsection}
We are interested in getting bounds on the sum $\sum_{j=1}^m \frac{1}{\norm{jx}}$.
This is an appealing question because the terms $\frac{1}{\norm{jx}}$ are unbounded.

Rather than merely stating that $\sum_{k=1}^\infty \frac{1}{k}=\infty$, we give more information by giving the
 estimate 
 \[
 \sum_{k=1}^n \frac{1}{k} = \log n + \gamma + O(n^{-1}),
 \]
 where $\gamma$ is Euler's constant. Likewise, rather than merely stating that there are infinitely many primes, we state more
 information with \cite[p.~102, \S 28]{handbuch}
 \[
 \sum_{p \leq x} \frac{1}{p} = \log \log x  + B + O\left( \frac{1}{\log x} \right),
 \]
 for a certain constant $B$ (namely ``Merten's constant''),
 or with \cite[p.~226, \S 61]{handbuch}
 \[
 \sum_{p \leq x} p = \frac{x^2}{2\log x}+O\left(\frac{x^2}{(\log x)^2}\right).
 \]

Because $|\sin (\pi x)| = \sin(\pi \norm{x}) 
\leq \pi \norm{x}$ and
\[
|\sin (\pi x)| = \sin(\pi \norm{x}) \geq \frac{2}{\pi} \pi \norm{x}=2\norm{x},
\]
we have
\begin{equation}
\frac{1}{\pi} \sum_{j=1}^m \frac{1}{\norm{jx}} 
\leq \sum_{j=1}^m \frac{1}{|\sin \pi  j x |} 
\leq \frac{1}{2} \sum_{j=1}^m  \frac{1}{\norm{jx}} .
\label{sinebound}
\end{equation}
Thus, getting bounds on $\sum_{j=1}^m \frac{1}{\norm{jx}}$ will give us bounds on $\sum_{j=1}^m \frac{1}{|\sin \pi  j x|}$.

Let $\psi$ be a nondecreasing function defined on the positive integers such that $\psi(h)>0$ for $h  \geq 1$ (for example, $\psi(h)=\log(2h)$). Following Kuipers and Niederreiter \cite[p.~121, Definition 3.3]{kuipers}, we say that 
an irrational number $x$ is \textbf{of type $<\psi$} if $\norm{hx} \geq \frac{1}{h\psi(h)}$ for all  integers $h \geq 1$.
If $\psi$ is a constant function, then we say that
\textbf{$x$ is of constant type}.

\begin{lemma}
$x \in \Omega$ is of constant type if and only if it has bounded partial quotients.
\label{constanttype}
\end{lemma}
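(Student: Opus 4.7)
The plan is to observe that being of constant type is, after unpacking the definition, literally the same condition as membership in $\mathcal{D}(2)$, and then invoke the preceding lemma.

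For the forward direction, I would start by assuming $x \in \Omega$ is of constant type, so there exists a constant $c > 0$ with $\psi(h) \equiv c$ such that $\norm{hx} \geq \frac{1}{ch}$ for all integers $h \geq 1$. This is precisely the statement $\norm{hx} \geq c^{-1} h^{-\tau+1}$ with $\tau = 2$ and $\gamma = c^{-1}$, so $x \in \mathcal{D}(2, c^{-1}) \subseteq \mathcal{D}(2)$. By the previous lemma, $\mathcal{D}(2) = \mathcal{B}$, so $x$ has bounded partial quotients.

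For the reverse direction, if $x$ has bounded partial quotients, then $x \in \mathcal{B} = \mathcal{D}(2)$, so there is some $\gamma > 0$ with $\norm{qx} \geq \gamma q^{-1}$ for every positive integer $q$. Setting $\psi(h) = \gamma^{-1}$, which is a (positive, constant, hence nondecreasing) function of $h$, we get $\norm{hx} \geq \frac{1}{h \psi(h)}$, and therefore $x$ is of type $< \psi$ with $\psi$ constant, i.e., of constant type.

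The main (and only) obstacle here is checking that the translation of the two definitions really lines up on the nose; there is no combinatorial or analytic work to do beyond citing the previous lemma. One should just be a little careful that the definition of "type $< \psi$" in the paper allows constant $\psi$ (it does, since the requirements are only that $\psi$ is nondecreasing and positive), so that "constant type" and "$\norm{hx} \gg h^{-1}$" are genuinely synonymous.
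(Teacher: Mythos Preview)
Your proof is correct but takes a different route from the paper's. You recognize that ``constant type'' is synonymous with membership in $\mathcal{D}(2)$ and then cite the earlier lemma (in the Diophantine conditions section) that $\mathcal{D}(2) = \mathcal{B}$; this reduces the present lemma to a definitional unpacking. The paper instead gives a self-contained direct argument using the continued-fraction machinery: for the forward direction it bounds $a_n$ via $a_n < q_n/q_{n-1} \leq q_n K \norm{q_{n-1}x} < K$ using \eqref{upperbound}, and for the reverse it combines Theorem~\ref{bestapproximation} with \eqref{lowerbound} to get $\norm{hx} > \frac{1}{2(K+1)h}$. Your approach is more economical and highlights that the paper is essentially proving the same fact twice with different constants; the paper's approach has the virtue of being readable without flipping back to the earlier section, which matters in a survey that aims to be self-contained at each step.
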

\begin{proof}
Suppose that $x \in \Omega$ is of constant type. So there is some $K>0$ such that $\norm{hx} \geq \frac{1}{hK}$ for all integers $h \geq 1$. For $n \geq 2$
we have $q_n=a_n q_{n-1}+q_{n-2}$, and hence, by \eqref{upperbound},
\[
a_n=\frac{q_n}{q_{n-1}}-\frac{q_{n-2}}{q_{n-1}}
<\frac{q_n}{q_{n-1}}
\leq q_n \cdot K\norm{q_{n-1}x}
< K,
\]
showing that $x$ has bounded partial quotients.

Suppose that $x \in \Omega$ has bounded partial quotients, say $a_n \leq K$ for all $n \geq 1$. Let $h$ be a positive integer and take $q_n \leq h
< q_{n+1}$. Then first by Theorem \ref{bestapproximation} and then by \eqref{lowerbound},
\[
\norm{hx} \geq \norm{q_n x} > \frac{1}{q_n+q_{n+1}} > \frac{1}{2q_{n+1}}
=\frac{1}{2(a_{n+1}q_n+q_{n-1})}
>\frac{1}{2(a_{n+1}q_n+q_n)},
\]
and so
\[
\norm{hx} > \frac{1}{2(K+1)} \frac{1}{q_n} \geq \frac{1}{2(K+1)}  \frac{1}{h},
\]
showing that $x$ is of constant type.
\end{proof}

However, almost all $x$ do not have bounded partial quotients \cite[p.~60, Theorem 29]{MR1451873}. Shallit \cite{MR1175525} gives a survey on numbers with bounded partial quotients.

We  state and prove a result of Khinchin's \cite[p.~69, Theorem 32]{MR1451873}  that we  then use. 

\begin{lemma}
Let $f$ be a positive function on the positive integers. If
\[
\sum_{q=1}^\infty f(q)<\infty,
\]
then for almost all $x \in \Omega$ there are only finitely many $q$ such that $\norm{qx}<f(q)$.
\label{khinchin}
\end{lemma}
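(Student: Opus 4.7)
The plan is to apply the first Borel--Cantelli lemma to the sets
\[
A_q = \{x \in [0,1] : \norm{qx} < f(q)\}, \qquad q \geq 1.
\]
Once this is set up, the conclusion that only finitely many $q$ satisfy $\norm{qx} < f(q)$ for almost all $x$ is exactly the statement that $x \notin \limsup_q A_q$ almost surely.

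First I would express $A_q$ as a finite union of intervals and bound its measure. The condition $\norm{qx} < f(q)$ means that there is an integer $p$ with $|qx - p| < f(q)$, equivalently $|x - p/q| < f(q)/q$. For $x \in [0,1]$ the relevant values of $p$ are $0, 1, \ldots, q$, and each contributes an interval of length at most $2f(q)/q$ (intersected with $[0,1]$). Collecting these intervals (and noting that the two end intervals at $p = 0$ and $p = q$ together cover total length at most $2f(q)/q$ in $[0,1]$) gives the bound
\[
\mu(A_q) \leq 2 f(q).
\]

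Next, summing over $q$ and using the hypothesis,
\[
\sum_{q=1}^\infty \mu(A_q) \leq 2 \sum_{q=1}^\infty f(q) < \infty.
\]
The first Borel--Cantelli lemma then yields $\mu(\limsup_q A_q) = 0$. Since $\mathbb{Q} \cap [0,1]$ is a null set, we conclude that for almost every $x \in \Omega$, the set $\{q \geq 1 : x \in A_q\} = \{q \geq 1 : \norm{qx} < f(q)\}$ is finite, which is the desired statement.

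There is no real obstacle here: the proof is a textbook application of Borel--Cantelli, and the only mildly delicate point is keeping the measure estimate for $A_q$ clean (being careful that the intervals around $0$ and $1$ do not get double-counted, or alternatively viewing everything on $\mathbb{T}$ to avoid boundary effects entirely). The summability hypothesis on $f$ is used exactly once, at the step that bounds $\sum_q \mu(A_q)$, and no regularity (e.g.\ monotonicity) of $f$ is needed.
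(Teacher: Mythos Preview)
Your proof is correct and essentially identical to the paper's own argument: define the sets $A_q$ (the paper calls them $E_q$), bound $\mu(A_q)\leq 2f(q)$ by covering with $q$ intervals of length $2f(q)/q$ plus the two half-intervals at the endpoints, and then apply Borel--Cantelli. The only cosmetic difference is that the paper works on $\Omega$ directly rather than on $[0,1]$ and then discarding rationals.
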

\begin{proof}
For each positive integer $q$, let $E_q=\{t\in \Omega: \norm{qt} < f(q)\}$. If $t \in E_q$, then there is some integer $p$ with $0 \leq p  \leq q$ such that $\left| t-\frac{p}{q} \right| < \frac{f(q)}{q}$. It follows that
\[
E_q \subset \left(0,\frac{f(q)}{q} \right) \cup \left(1-\frac{f(q)}{q}, 1 \right)  \cup \bigcup_{p=1}^{q-1} \left(\frac{p}{q}-\frac{f(q)}{q},\frac{p}{q}+\frac{f(q)}{q} \right).
\]
Therefore
\[
\sum_{q=1}^\infty \mu(E_q)  \leq  \sum_{q=1}^\infty 
2q \cdot \frac{f(q)}{q}=2\sum_{q=1}^\infty f(q) < \infty.
\]
Let $E=\limsup_{q \to \infty} E_q$, i.e. $E=\{t \in \Omega : t \in E_q \, \textrm{for infinitely many $q$}\}$. Then by the Borel-Cantelli lemma \cite[p.~59, Theorem 4.3]{billingsley} we have that $\mu(E)=0$. Therefore, for almost all $t \in \Omega$ there are only finitely many $q$ such that $t \in E_q$, i.e., for almost all $t \in \Omega$ there are only finitely many $q$ such that $\norm{qt} < f(q)$.
\end{proof}

The above lemma is  proved in
Benedetto and Czaja \cite[p.~183, Theorem 4.3.3]{benedetto} using
the fact that a function of bounded variation is differentiable almost everywhere. We outline the proof. Define $F:[0,1] \to \mathbb{R}_{>0}$ by 
$F(x) = \frac{f(q)}{q}$ if $x=\frac{a}{q}$, $\gcd(a,q)=1$, $0 \leq a \leq q$, and $F(x)=0$ if $x \in \Omega$. 
Writing $\mathscr{A}_q = \left\{\frac{a}{q}: 0 \leq a \leq q, \gcd(a,q)=1\right\}$,
for $0=t_0<t_1<\cdots<t_N=1$,
\[
\sum_{j=0}^N F(t_j) =   \sum_{q=1}^\infty \sum_{j=0}^N F(t_j) \cdot 1_{\mathscr{A}_q}(t_j) = 
\sum_{q=1}^\infty \frac{f(q)}{q} \sum_{j=0}^N 1_{\mathscr{A}_q}(t_j) \leq \sum_{q=1}^\infty f(q).
\]
It follows that the total variation of $F$ is
$\leq 2 \sum_{q=1}^\infty f(q)$ and hence $F$ is a function of \textbf{bounded variation}. Because $F$ has bounded variation, the set $D_F$ of
$x \in [0,1]$ at which $F$ is differentiable is a Borel set with $\lambda(D_F)=1$. Check that $F'(x)=0$ for $x \in D_F \setminus \mathbb{Q}$, and using this,
if
$\frac{a_n}{q_n} \to x$ with $\gcd(a_n,q_n)=1$ and $0 \leq a_n \leq q_n$ then for some $N$, if $n \geq N$ then
$\left|x-\frac{a_n}{q} \right| \geq \frac{F(q_n)}{q_n}$.

We use the above lemma to prove the following result.

\begin{lemma}
Let $\epsilon>0$. For almost all $x \in \Omega$, there is some $K>0$ such that $x$ is of type $<K(\log h)^{1+\epsilon}$.
\label{aatype}
\end{lemma}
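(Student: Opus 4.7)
The plan is to apply Khinchin's Lemma (Lemma \ref{khinchin}) with the choice
\[
f(q) = \frac{1}{q(\log q)^{1+\epsilon}}, \qquad q \geq 2,
\]
and, say, $f(1) = 1$. The hypothesis $\sum_q f(q) < \infty$ is the crux of the choice: by the integral test (or Cauchy condensation), $\int_2^\infty \frac{dt}{t(\log t)^{1+\epsilon}} < \infty$ since $\epsilon > 0$, so the series converges. This is the only analytic input beyond Lemma \ref{khinchin}.

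Given convergence, Khinchin's Lemma tells us that for almost every $x \in \Omega$ there are only finitely many $q$ with $\norm{qx} < f(q)$. Equivalently, there is an integer $N = N(x)$ such that
\[
\norm{qx} \geq \frac{1}{q(\log q)^{1+\epsilon}} \qquad \text{for every } q \geq N.
\]
For such $q$ the desired type bound holds with constant $1$.

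It remains to absorb the finitely many exceptional values $1 \leq h < N$ into a single constant $K = K(x)$. Since $x \in \Omega$ is irrational, $\norm{hx} > 0$ for each such $h$, so the quantity $h \norm{hx}$ is bounded below by a positive number depending only on $x$ and $N(x)$. Interpreting $\psi(h) = K(\log h)^{1+\epsilon}$ via the author's convention (as with the example $\psi(h) = \log(2h)$, one may take $\psi(h) = K(\log(h+1))^{1+\epsilon}$ or simply $\max(1, K(\log h)^{1+\epsilon})$ so that $\psi(1) > 0$ and $\psi$ is nondecreasing), it suffices to choose
\[
K \geq \max_{1 \leq h < N} \frac{1}{h \norm{hx} (\log(h+1))^{1+\epsilon}},
\]
together with $K \geq 1$, to guarantee $\norm{hx} \geq \frac{1}{hK(\log h)^{1+\epsilon}}$ (with the chosen convention) for every $h \geq 1$.

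The main (and essentially only) obstacle is the convergence verification for the series $\sum 1/(q(\log q)^{1+\epsilon})$; everything else is a mechanical application of Lemma \ref{khinchin} plus the trivial observation that finitely many positive quantities can be absorbed into a single constant.
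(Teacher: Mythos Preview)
Your proof is correct and follows essentially the same route as the paper's own argument: apply Lemma~\ref{khinchin} with $f(q)=1/(q(\log q)^{1+\epsilon})$, use convergence of $\sum f(q)$ to conclude that the exceptional set has measure zero, and then absorb the finitely many small $h$ into a constant $K$. In fact you are more careful than the paper in two places: you explicitly handle $h=1$ (where $\log h=0$ makes the expression undefined), and you spell out how $K$ is chosen to cover the finitely many exceptional indices, whereas the paper simply asserts that ``it follows that there is some $K$''.
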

\begin{proof}
Let
\[
E =\left\{t \in \Omega: \norm{ht} < \frac{1}{h(\log h)^{1+\epsilon}} \, \textrm{for infinitely many $h$}\right\}.
\]
Since $\sum_{h=1}^\infty \frac{1}{h(\log h)^{1+\epsilon}}$ converges, we have by Lemma \ref{khinchin} that
$\mu(E)=0$. 
Let $t \in \Omega \setminus E$.  Then $\norm{ht} \geq \frac{1}{h (\log h)^{1+\epsilon}}$ for all sufficiently large $h$.
It follows that there is some $K$ such that $t$ is of type $<K(\log h)^{1+\epsilon}$.
\end{proof}

The following technical lemma is from Kuipers and Niederreiter \cite[p.~130, Exercise 3.9]{kuipers}; cf. Lang \cite[p.~39, Lemma]{MR0209227}.

\begin{lemma}
Let $x \in \Omega$ be of type $< \psi$. If $n \geq 0$ and if $0 \leq h_0 < q_{n+1}$, then
\[
\sum_{\stackrel{1 \leq j \leq q_n}{j+h_0<q_{n+1}}} \frac{1}{\norm{(j+h_0)x}} < 6 q_n(\psi(q_n)+\log q_n).
\]
\label{h0}
\end{lemma}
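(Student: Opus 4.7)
The plan is to split the sum according to whether $h := j + h_0$ lies below or above $q_n$ and exploit the sharper pairwise separation that becomes available inside each piece. Set $d = \norm{q_n x}$, $d' = \norm{q_{n-1} x}$ (for $n \geq 1$), and let $M \leq q_n$ denote the size of the index set $J$; I would write $J = J_1 \sqcup J_2$ with $J_1 = \{j \in J : j + h_0 < q_n\}$ and $J_2 = \{j \in J : q_n \leq j + h_0 < q_{n+1}\}$.

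The key observation is the following separation: for distinct $j, j' \in J_\ell$ (either piece), the difference $|h - h'| = |j - j'|$ satisfies $1 \leq |h - h'| \leq M - 1 < q_n$, so Theorem \ref{bestapproximation} applied at level $n-1$ yields $\norm{(h - h')x} \geq d'$. Hence the points $hx \bmod 1$ are pairwise at circular distance $\geq d'$ within each $J_\ell$. Moreover Theorem \ref{bestapproximation} applied to $h$ itself gives $\norm{hx} \geq d'$ when $h < q_n$ and $\norm{hx} \geq d$ when $q_n \leq h < q_{n+1}$.

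I would then run a box argument on each piece: for each integer $k \geq 0$ the number of $h \in J_\ell$ with $\norm{hx} \in [kd', (k+1)d')$ is at most two, since the preimage is a union of two circular arcs of length $d'$ and a $d'$-separated set has at most one point per such arc. For $J_1$, every term lies in some box with $k \geq 1$, so $\sum_{j \in J_1} 1/\norm{(j+h_0)x} \leq (2/d')(1 + \log q_n)$. For $J_2$, the same bound controls boxes $k \geq 1$, while box $0$ (values in $[d, d')$) holds at most two indices, because its circular preimage consists of two arcs of length $d' - d < d'$; each such term is $\leq 1/d$, for a total contribution of at most $2/d$.

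To finish, I would bound the two denominators separately: \eqref{lowerbound} gives $1/d' < q_{n-1} + q_n \leq 2 q_n$ with no appearance of $\psi$, whereas the type-$<\psi$ hypothesis applied to $h = q_n$ gives $1/d \leq q_n \psi(q_n)$. Assembling, the total sum is at most $2 q_n \psi(q_n) + 8 q_n(1 + \log q_n)$, which yields the claimed $6 q_n(\psi(q_n) + \log q_n)$ after the usual absorption of constants (and a separate treatment of the degenerate $n = 0$ case, where the single term is bounded directly by $\psi(1)$). The main obstacle I anticipate is precisely the box-$0$ step for $J_2$: one must resist the temptation to use $1/d \leq q_n \psi(q_n)$ uniformly across the sum, which would give only the loose bound $O(q_n \psi(q_n)\log q_n)$; the crucial point is that the small scale $d$ enters in only two terms, while the remaining $\log q_n$ worth of terms are controlled by the purely continued-fractional estimate $1/d' \leq 2 q_n$, preserving the additive structure $\psi(q_n) + \log q_n$ rather than the multiplicative $\psi(q_n)\log q_n$.
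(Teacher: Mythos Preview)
Your argument is correct and is genuinely different from the paper's. The paper writes $x = p_n/q_n + \delta_n/(q_nq_{n+1})$ with $|\delta_n|<1$, observes that the residues $\lambda_j$ of $(j+h_0)p_n$ modulo $q_n$ run through $\{0,1,\ldots,q_n-1\}$, isolates the three ``dangerous'' indices with $\lambda_j\in\{0,1,q_n-1\}$ (handling each by $\norm{(j+h_0)x}\geq\norm{q_nx}\geq 1/(q_n\psi(q_n))$), and for the remaining $j$ uses $\norm{(j+h_0)x}\geq\norm{\lambda_j/q_n \pm 1/q_n}$ to reduce to $\sum_{k=1}^{q_n-1}1/\norm{k/q_n}<3q_n\log q_n$. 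Your proof bypasses the rational approximation entirely: you work directly on the circle, using that all the points $(j+h_0)x$ are pairwise $d'$-separated (since $|j-j'|<q_n$), then run a box decomposition at scale $d'$. Both routes isolate the same phenomenon---at most a bounded number of terms (three for the paper, two for you) feel the small scale $d=\norm{q_nx}$, while the harmonic tail is governed purely by $1/d'<2q_n$---and this is exactly what produces the additive form $\psi(q_n)+\log q_n$.

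Two remarks. First, your split $J=J_1\sqcup J_2$ is unnecessary: since \emph{all} the points $(j+h_0)x$, $j\in J$, are pairwise $d'$-separated (not just within each $J_\ell$), a single box argument on the whole of $J$ gives at most two points in the small box $[d,d')$ and at most two in each box $[kd',(k+1)d')$, $k\geq 1$, yielding $2/d + (2/d')(1+\log q_n)$ rather than the doubled $\log$-contribution your split produces. Second, with that tightening the constant $6$ does follow: using $1/d\leq q_n\psi(q_n)$, $1/d'<2q_n$, and the fact that $\psi$ is nondecreasing with $\psi(1)\geq 1/\norm{x}>2$ (hence $\psi(q_n)>2$), one gets $2q_n\psi(q_n)+4q_n(1+\log q_n)\leq 6q_n(\psi(q_n)+\log q_n)$. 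As written, your version with the $J_1/J_2$ split gives $8q_n(1+\log q_n)$ for the tail, which does \emph{not} absorb into $6q_n(\psi(q_n)+\log q_n)$ for bounded $\psi$ and large $q_n$; so the ``usual absorption of constants'' needs the observation just made.
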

\begin{proof}
Since $p_n$ and $q_n$ are relatively prime, 
the remainders of $j p_n$, $j=1,\ldots,q_n$, when divided by $q_n$ are all distinct.
Then also, the remainders of $j p_n+h_0 p_n$, $j=1,\ldots,q_n$, when divided by $q_n$ are all distinct. Let $\lambda_j$, $j=1,\ldots,q_n$, be the remainder of $jp_n+
h_0p_n$  when divided by $q_n$. We have $\{\lambda_j: 1 \leq j \leq q_n\}=\{0,\ldots,q_n-1\}$; let $\lambda_{j_1}=0$, $\lambda_{j_2}=1$, and $\lambda_{j_3}=q_n-1$.

Write $x=\frac{p_n}{q_n}+\frac{\delta_n}{q_n q_{n+1}}$; by \eqref{upperbound} we have $|\delta_n|<1$. If $j+h_0<q_{n+1}$ then by
Theorem \ref{bestapproximation} we have $\norm{(j+h_0)x} \geq \norm{q_n x}$, 
and since $x$ is of type $<\psi$ we have
\[
\norm{(j+h_0)x} \geq \norm{q_n x} \geq \frac{1}{q_n \psi(q_n)}.
\] 
Let, for $i=1,2,3$,
\[
A_i=\begin{cases}
1,&\textrm{if $j_i+h_0<q_{n+1}$},\\
0,&\textrm{if $j_i+h_0 \geq q_{n+1}$}.
\end{cases}
\]
If $j+h_0<q_{n+1}$ and $j \neq j_1,j_2,j_3$, then 
\[\norm{\frac{\lambda_j}{q_n}+\frac{(j+h_0)\delta_n}{q_n q_{n+1}}} \geq \min \left\{ \norm{\frac{\lambda_j}{q_n}+\frac{1}{q_n}},\norm{\frac{\lambda_j}{q_n}-\frac{1}{q_n}}  \right\}.
\]
It follows that
\begin{eqnarray*}
\sum_{\stackrel{1 \leq j \leq q_n}{j+h_0<q_{n+1}}} \frac{1}{\norm{(j+h_0)x}}
&=&
\frac{A_1}{\norm{(j_1+h_0)x}}
+
\frac{A_2}{\norm{(j_2+h_0)x}}
+\frac{A_3}{\norm{(j_3+h_0)x}}\\
&&+
\sum_{\stackrel{1 \leq j \leq q_n}{\stackrel{j+h_0<q_{n+1}}{j \neq j_1, j_2, j_3}}} \frac{1}{\norm{\frac{\lambda_j}{q_n}+\frac{(j+h_0)\delta_n}{q_n q_{n+1}}}}\\
&\leq&3q_n\psi(q_n)+\sum_{\stackrel{1 \leq j \leq q_n}{\stackrel{j+h_0<q_{n+1}}{j \neq j_1, j_2, j_3}}} \frac{1}{\norm{\frac{\lambda_j}{q_n}+\frac{(j+h_0)\delta_n}{q_n q_{n+1}}}}\\
&\leq&3q_n\psi(q_n)+\sum_{\stackrel{1 \leq j \leq q_n}{\stackrel{j+h_0<q_{n+1}}{j \neq j_1, j_2, j_3}}} \frac{1}{\norm{\frac{\lambda_j}{q_n}+\frac{1}{q_n}}}\\
&&+\sum_{\stackrel{1 \leq j \leq q_n}{\stackrel{j+h_0<q_{n+1}}{j \neq j_1, j_2, j_3}}} \frac{1}{\norm{\frac{\lambda_j}{q_n}-\frac{1}{q_n}}}\\
&<&3q_n\psi(q_n)+2\sum_{k=1}^{q_n-1} \frac{1}{\norm{\frac{k}{q_n}}}.
\end{eqnarray*}

But $\frac{1}{\norm{y}}<\frac{1}{R(y)}+\frac{1}{1-R(y)}$ for $y \not \in \mathbb{Z}$, so
\begin{eqnarray*}
\sum_{k=1}^{q_n-1} \frac{1}{\norm{\frac{k}{q_n}}}&<&\sum_{k=1}^{q_n-1} \frac{1}{R\left( \frac{k}{q_n} \right)}
+\frac{1}{1-R\left( \frac{k}{q_n} \right)}\\
&=&\sum_{k=1}^{q_n-1} \frac{1}{\frac{k}{q_n}}+\frac{1}{1-\frac{k}{q_n}}\\
&=&2q_n \sum_{k=1}^{q_n-1} \frac{1}{k}\\
&<&3 q_n \log q_n;
\end{eqnarray*}
the last inequality is because, for all $m \geq 1$,
\[
\sum_{k=1}^m \frac{1}{k} < \frac{3}{2}\log(m+1).
\]
\end{proof}

We now use Lemma \ref{h0} to obtain a bound on $\sum_{j=1}^m \frac{1}{\norm{jx}}$  in terms of the type of $x$. This is from Kuipers and Niederreiter \cite[p.~131, Exercise 3.11]{kuipers};
cf. Lang \cite[p.~39, Theorem 2]{MR0209227}.

\begin{theorem}
If $x \in \Omega$ is of type $<\psi$, then for all $m \geq 1$ we have
\[
\sum_{j=1}^m \frac{1}{\norm{jx}}<12m(\psi(m)+\log m).
\]
\label{jalpha}
\end{theorem}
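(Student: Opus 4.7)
The plan is to reduce the sum on $\{1,\ldots,m\}$ to a bounded number of applications of Lemma \ref{h0}. Given $m \geq 1$, I would first choose the unique $n \geq 0$ such that $q_n \leq m < q_{n+1}$ (this exists because $q_n \to \infty$). This matches the hypothesis of Lemma \ref{h0}, which gives a bound on any block of $q_n$ consecutive indices lying below $q_{n+1}$.

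Next I would set $s = \lfloor m/q_n \rfloor$ and partition $\{1,\ldots,m\}$ into the $s+1$ consecutive blocks $B_k = \{kq_n + 1, \ldots, \min((k+1)q_n, m)\}$ for $k=0,1,\ldots,s$. For each $k$, writing $h_0 = kq_n$, the block $B_k$ consists of integers of the form $j+h_0$ with $1 \leq j \leq q_n$ and $j+h_0 \leq m < q_{n+1}$. Since $h_0 = kq_n \leq sq_n \leq m < q_{n+1}$, Lemma \ref{h0} applies and yields
\[
\sum_{i \in B_k} \frac{1}{\norm{ix}} \;\leq\; \sum_{\substack{1 \leq j \leq q_n \\ j+h_0 < q_{n+1}}} \frac{1}{\norm{(j+h_0)x}} \;<\; 6 q_n(\psi(q_n)+\log q_n).
\]

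Summing over the $s+1$ blocks gives
\[
\sum_{j=1}^m \frac{1}{\norm{jx}} \;<\; 6(s+1)q_n\bigl(\psi(q_n)+\log q_n\bigr).
\]
From $sq_n \leq m$ and $q_n \leq m$ I would deduce $(s+1)q_n \leq m + q_n \leq 2m$. Since $\psi$ is nondecreasing and $q_n \leq m$, we have $\psi(q_n) \leq \psi(m)$ and $\log q_n \leq \log m$, so the right-hand side is strictly less than $12m(\psi(m)+\log m)$, which is the desired conclusion.

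The main obstacle is purely a piece of bookkeeping: verifying that the $s+1$ blocks really cover $\{1,\ldots,m\}$ without overshooting $q_{n+1}$, so that each application of Lemma \ref{h0} is legitimate, and then checking the telescoping inequality $(s+1)q_n \leq 2m$ that lets the factor of $6$ in the lemma be absorbed into the factor $12$ in the theorem. Once $n$ is chosen by $q_n \leq m < q_{n+1}$, both points are essentially immediate; there is no additional analytic content beyond Lemma \ref{h0} itself.
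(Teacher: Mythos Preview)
Your proof is correct. The paper proves the same bound by induction on $m$: with $q_n \leq m < q_{n+1}$, it peels off a single block of length $q_n$ (from the top or bottom of $\{1,\ldots,m\}$, depending on whether $m < 2q_n$ or $m \geq 2q_n$) using Lemma~\ref{h0}, and then invokes the inductive hypothesis on what remains. Your argument does the same thing non-inductively: you count $s+1 \leq 2m/q_n$ blocks up front and apply Lemma~\ref{h0} to each. This avoids the case split and the inductive bookkeeping at the cost of a one-line counting of blocks; neither approach has analytic content beyond Lemma~\ref{h0}, and yours is arguably the cleaner packaging.
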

\begin{proof}
We shall prove the claim by induction. Because $x$ is of type $<\psi$, we have
\[
\frac{1}{\norm{x}} \leq \psi(1)<12\psi(1),
\]
so the claim is true for $m=1$. Take $m>1$ and assume that the claim is true for all $1 \leq m'<m$. We shall show that it is true for $m$.

Let $q_n \leq m < q_{n+1}$. Either $m < 2q_n$ or $m \geq 2q_n$.
In the first case, using Lemma \ref{h0}  we have
\begin{eqnarray*}
\sum_{j=1}^m \frac{1}{\norm{jx}}&=&\sum_{j=1}^{q_n} \frac{1}{\norm{jx}}+\sum_{j=1}^{m-q_n} \frac{1}{\norm{(j+q_n)x}}\\
&<&12 q_n(\psi(q_n)+\log q_n)\\
&<&12m(\psi(m)+\log m).
\end{eqnarray*}

In the second case, using the induction assumption (with $m'=m-q_n$) and Lemma \ref{h0} we have, because $q_n<m-q_n$,
\begin{eqnarray*}
\sum_{j=1}^m \frac{1}{\norm{jx}}&=&\sum_{j=1}^{m-q_n} \frac{1}{\norm{jx}}+\sum_{j=1}^{q_n} \frac{1}{\norm{(j +m-q_n)x}}\\
&<&12(m-q_n)\left(\psi(m-q_n)+\log(m-q_n)\right)+6q_n(\psi(q_n)+\log q_n)\\
&<&12(m-q_n)\left(\psi(m)+\log m \right)+12q_n(\psi(q_n)+\log q_n)\\
&=&12m(\psi(m)+\log m)-12q_n\left(\psi(m)-\psi(q_n)+\log m - \log q_n\right)\\
&\leq&12m(\psi(m)+\log m).
\end{eqnarray*}

The claim is true in both cases, which completes the proof by induction.
\end{proof}

We can now establish for almost all $x \in \Omega$ a tractable upper bound on the sum $\sum_{j=1}^m \frac{1}{\norm{jx}}$, and thus
by \eqref{sinebound} also on $\sum_{j=1}^m \frac{1}{|\sin \pi  j x|}$; cf. Lang \cite[p.~44, Theorem 3]{MR0209227}.

\begin{theorem}
Let $\epsilon>0$. For almost all $x \in \Omega$, we have
\[
\sum_{j=1}^m \frac{1}{\norm{jx}} = O\left(m \left(\log m\right)^{1+\epsilon}\right),
\]
while if $x$ has bounded partial quotients then
\[
\sum_{j=1}^m \frac{1}{\norm{jx}} = O\left(m \log m \right).
\]
\label{reciprocaltheorem}
\end{theorem}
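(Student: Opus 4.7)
The plan is to deduce both statements by feeding the right choice of $\psi$ into Theorem \ref{jalpha}. Everything needed has already been assembled; the theorem is essentially a packaging of Lemma \ref{aatype}, Lemma \ref{constanttype}, and Theorem \ref{jalpha}.

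First I would handle the bounded partial quotients case. By Lemma \ref{constanttype}, if $x \in \Omega$ has bounded partial quotients then $x$ is of constant type, i.e.\ there exists $K>0$ such that $\norm{hx} \geq \frac{1}{hK}$ for all $h \geq 1$; equivalently, $x$ is of type $<\psi$ for the constant function $\psi \equiv K$. Theorem \ref{jalpha} then immediately gives
\[
\sum_{j=1}^m \frac{1}{\norm{jx}} < 12m(K + \log m) = O(m \log m),
\]
where the implicit constant depends on $x$ through $K$.

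Next I would handle the almost-all case. By Lemma \ref{aatype}, for almost every $x \in \Omega$ there is some $K=K(x)>0$ such that $x$ is of type $<\psi$ with $\psi(h) = K(\log h)^{1+\epsilon}$ (understood, if necessary, as $K(\log(h+1))^{1+\epsilon}$ or similar so that $\psi$ is positive and nondecreasing on $\mathbb{Z}_{\geq 1}$; the only place any such adjustment matters is the finitely many small $h$, which contribute $O(1)$). Applying Theorem \ref{jalpha} with this $\psi$ yields
\[
\sum_{j=1}^m \frac{1}{\norm{jx}} < 12m\bigl(K(\log m)^{1+\epsilon} + \log m\bigr) = O\bigl(m(\log m)^{1+\epsilon}\bigr),
\]
with the implicit constant again depending on $x$. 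This proves the first assertion for almost all $x \in \Omega$.

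There is no real obstacle; the argument is purely a substitution into Theorem \ref{jalpha}. The only point requiring even minor care is that the definition of ``type $<\psi$'' requires $\psi$ to be a positive nondecreasing function on the positive integers, so one must ensure the $\psi$ extracted from Lemma \ref{aatype} is positive at $h=1$ (where $\log 1 = 0$); this is a harmless normalization since $\norm{1 \cdot x}$ contributes a single bounded term that is absorbed into the $O$-constant.
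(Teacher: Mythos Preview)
Your proposal is correct and follows essentially the same approach as the paper: invoke Lemma \ref{constanttype} (respectively Lemma \ref{aatype}) to obtain a suitable $\psi$, then substitute into Theorem \ref{jalpha}. The only difference is that you handle the two cases in the opposite order and add a remark about the positivity of $\psi$ at $h=1$, which the paper leaves implicit.
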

\begin{proof}
Let $\epsilon>0$. By Lemma \ref{aatype}, for almost all $x \in \Omega$ there is some $K$ such that $x$ is of type $<K(\log h)^{1+\epsilon}$.
For
such an $x$, it follows from Theorem \ref{jalpha} that for all $m \geq 1$,
\[
\sum_{j=1}^m \frac{1}{\norm{jx}} < 12m(K(\log m)^{1+\epsilon}+\log m)=O\left(m \left(\log m\right)^{1+\epsilon}\right).
\]

If $x$ has bounded partial quotients then by Lemma \ref{constanttype} it is of constant type, say $\psi(m)=K$ for some $K$. It follows from Theorem \ref{jalpha} that for all $m \geq 1$,
\[
\sum_{j=1}^m \frac{1}{\norm{jx}} < 12m(K+\log m)=O\left(m \log m \right).
\]
\end{proof}

For example, take $x=\frac{-1+\sqrt{5}}{2}$, for which $a_n(x)=1$ for all $n \in \mathbb{N}$, and so in particular $x$ has bounded partial quotients.
In Figure \ref{m5000_40000_g} we plot $\frac{1}{m \log m} \cdot \sum_{j=1}^m \frac{1}{\norm{jx}}$ for $m=5000,\ldots,40000$.  
 These computations suggest that there is some constant $C$ for which $\sum_{j=1}^m \frac{1}{\norm{jx}} > C m \log m$ for all $m$.
 In Theorem \ref{omegabound} we shall prove that for almost all $x \in \Omega$ there is such a $C(x)$.

\begin{figure}
\includegraphics[width=\textwidth]{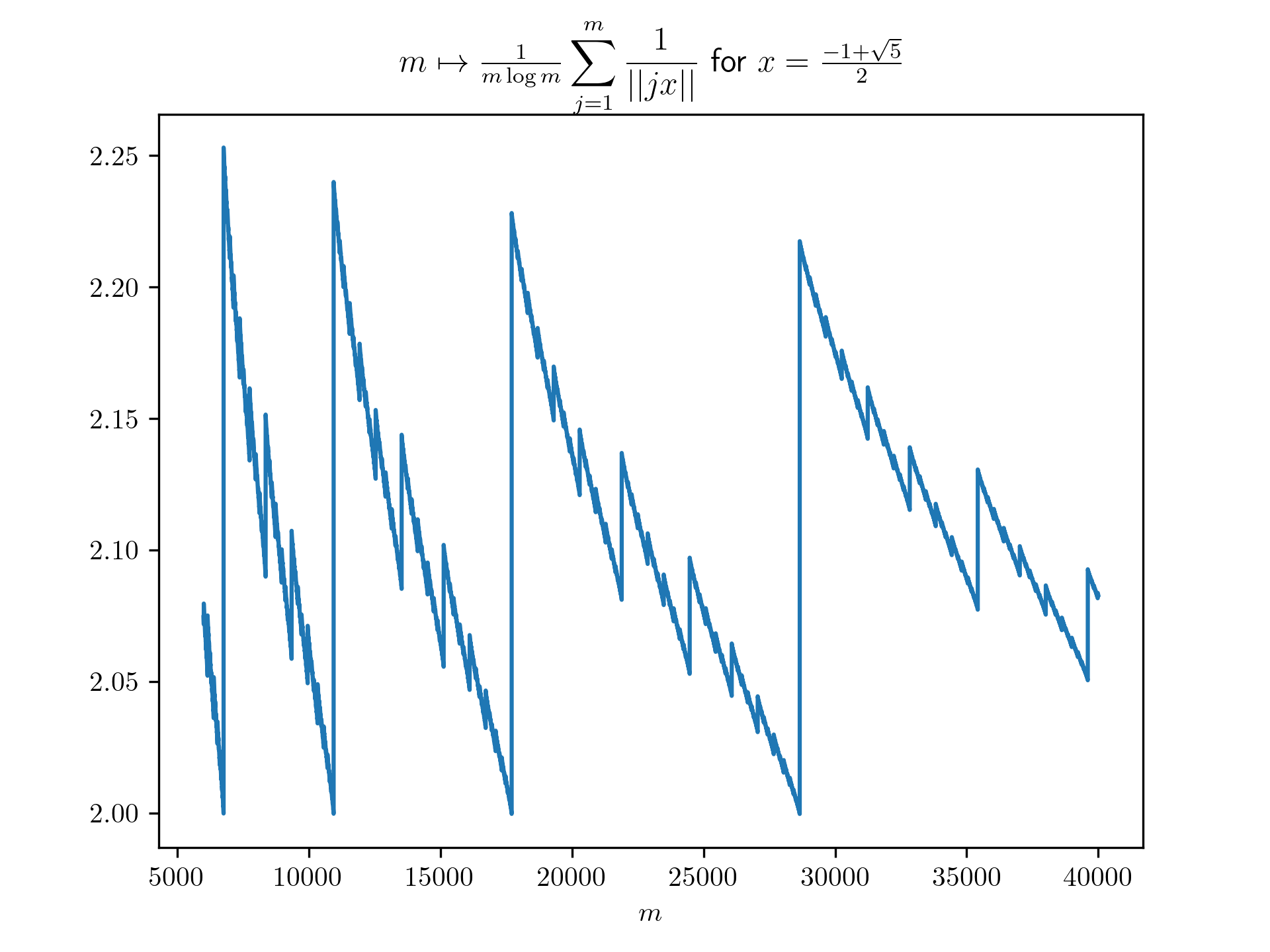}
\caption{$\frac{1}{m\log m} \sum_{j=1}^m \frac{1}{\norm{jx}}$ for $x=\frac{-1+\sqrt{5}}{2}$, for $m=5000,\ldots,40000$}
\label{m5000_40000_g}
\end{figure}

However, the estimate in Theorem \ref{reciprocaltheorem} is not true for all $x \in \Omega$.
Define $a \in \mathbb{N}^\mathbb{N}$ as follows. Let $a_1$ be any element of $\mathbb{N}$. Then inductively, define
$a_{n+1}$ to be any element of $\mathbb{N}$ that is $>q_n^{n-1}$. Then for any $n \in \mathbb{N}$,  using \eqref{upperbound} and
$q_{n+1}=a_{n+1}q_n+q_{n-1}>a_{n+1}q_n$ we get
\[
| q_n v(a) - p_n| < \frac{1}{q_{n+1}} <\frac{1}{a_{n+1}q_n}<\frac{1}{q_n^n},
\] 
hence $\norm{q_n v(a)} < \frac{1}{q_n^n}$, and then
\[
\sum_{j=1}^{q_n} \frac{1}{\norm{j v(a)}} > \frac{1}{\norm{q_n v(a)}} > q_n^n.
\]
Using $\epsilon=1$, it is then straightforward to check that there is no constant $C$ such that $\sum_{j=1}^m \frac{1}{\norm{jv(a)}} \leq 
C m(\log m)^2$ for all $m$. 

We will need the following lemma  \cite[p.~324, Lemma 3]{billingsley}  to prove a theorem;
cf. Khinchin \cite[p. 63, Theorem 30]{MR1451873}. 

\begin{lemma}
If $\phi$ is a function defined on the positive integers such that $\phi(n) \geq 1$ for all $n$ and 
\[
\sum_{n=1}^\infty \frac{1}{\phi(n)}<\infty,
\]
then for almost all $x \in \Omega$ there are only finitely many $n$ such that $a_n(x) \geq \phi(n)$.
\label{partialquotients}
\end{lemma}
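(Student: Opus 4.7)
The plan is to set up a Borel--Cantelli argument on the sets
\[
E_n = \{x \in \Omega : a_n(x) \geq \phi(n)\}, \qquad n \geq 1,
\]
and to show $\sum_n \mu(E_n) < \infty$ by obtaining the bound $\mu(E_n) \leq 2/\phi(n)$. Once that bound is in hand, Borel--Cantelli \cite[p.~59, Theorem 4.3]{billingsley} gives $\mu(\limsup_n E_n) = 0$, and a point in $\Omega \setminus \limsup_n E_n$ is exactly a point for which $a_n(x) \geq \phi(n)$ for only finitely many $n$.

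To establish the bound, first I would decompose $E_n$ according to the values of the first $n-1$ partial quotients: for each $i=(i_1,\dots,i_{n-1}) \in \mathbb{N}^{n-1}$ the set $I_{n-1}(i)$ is an interval of measure $\frac{1}{q_{n-1}(q_{n-1}+q_{n-2})}$, where $q_{n-1},q_{n-2}$ depend only on $i$, and
\[
E_n = \bigsqcup_{i \in \mathbb{N}^{n-1}} \bigsqcup_{k \geq \phi(n)} I_n(i_1,\dots,i_{n-1},k).
\]
For each $k$, since $q_n = k q_{n-1} + q_{n-2}$, the formula $\mu(I_n(j)) = \frac{1}{q_n(q_n+q_{n-1})}$ (stated just after the theorem in Section~\ref{preliminaries}) gives
\[
\mu\bigl(I_n(i_1,\dots,i_{n-1},k)\bigr) = \frac{1}{(k q_{n-1}+q_{n-2})((k+1)q_{n-1}+q_{n-2})}.
\]

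The key computation is that this telescopes: writing $a=q_{n-1}$, $b=q_{n-2}$, one has
\[
\frac{1}{(ka+b)((k+1)a+b)} = \frac{1}{a}\left(\frac{1}{ka+b} - \frac{1}{(k+1)a+b}\right),
\]
so, letting $K = \lceil \phi(n) \rceil$,
\[
\sum_{k \geq K} \mu\bigl(I_n(i_1,\dots,i_{n-1},k)\bigr) = \frac{1}{q_{n-1}(Kq_{n-1}+q_{n-2})}.
\]
Using $q_{n-2} \leq q_{n-1}$ and $K \geq \phi(n)$, this is at most
\[
\frac{1}{K q_{n-1}^2} \leq \frac{2}{\phi(n)} \cdot \frac{1}{q_{n-1}(q_{n-1}+q_{n-2})} = \frac{2}{\phi(n)}\, \mu(I_{n-1}(i)).
\]
Summing over all $i \in \mathbb{N}^{n-1}$ yields $\mu(E_n) \leq \frac{2}{\phi(n)}$, and hence $\sum_n \mu(E_n) \leq 2 \sum_n \frac{1}{\phi(n)} < \infty$, completing the argument.

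The only delicate step is the telescoping identity together with the comparison of $\sum_{k \geq K}\mu(I_n(i_1,\dots,i_{n-1},k))$ to $\mu(I_{n-1}(i))$; once one sees that the ratio is bounded by $2/\phi(n)$ uniformly in the cylinder $I_{n-1}(i)$, the rest is a routine application of Borel--Cantelli. No subtlety about the Gauss measure versus Lebesgue measure is needed here, because the exact cylinder measures under $\mu$ are already known from the theorem recorded in Section~\ref{preliminaries}.
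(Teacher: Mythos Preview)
Your proof is correct and arrives at the same bound $\mu(E_n)\le 2/\phi(n)$ as the paper, but by a genuinely different route. The paper's proof is dynamical: it writes $A_n\subset (T^{n-1})^{-1}\bigl([0,1/\phi(n))\bigr)$ for the Gauss transformation $T$, then sandwiches Lebesgue measure $\mu$ by the Gauss measure $\gamma$ (using $\tfrac{1}{2\log 2}\mu\le\gamma\le\tfrac{1}{\log 2}\mu$) and invokes the $T$-invariance of $\gamma$ to get $\mu(A_n)\le 2/\phi(n)$. Your argument is purely combinatorial: you sum the exact Lebesgue measures of the cylinder sets $I_n(i_1,\dots,i_{n-1},k)$ over $k\ge\lceil\phi(n)\rceil$, telescope, and compare the result to $\mu(I_{n-1}(i))$. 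Your approach is more self-contained, since it uses only the cylinder measure formula already recorded in \S\ref{preliminaries} and avoids introducing the Gauss measure; the paper's approach is more conceptual and showcases why the invariant measure is useful, at the cost of bringing in an extra object. One small point: the case $n=1$ involves $q_{-1}$, which the paper does not define; it is simplest just to handle $E_1=\{x:a_1(x)\ge\phi(1)\}=\bigcup_{k\ge\lceil\phi(1)\rceil}\bigl(\tfrac{1}{k+1},\tfrac{1}{k}\bigr)$ directly, giving $\mu(E_1)=1/\lceil\phi(1)\rceil\le 1/\phi(1)$.
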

\begin{proof}
For a measurable set $A \subset \Omega$, define
\[
\gamma(A)=\frac{1}{\log 2}\int_A \frac{1}{1+x} d\mu(x),
\]
in other words $d\gamma(x) = \frac{1}{(1+x)\log 2} d\mu(x)$.
Thus for a measurable set $A \subset \Omega$ we have
\[
\gamma(A) \leq \frac{1}{\log 2}\int_A dx = \frac{1}{\log 2}\mu(A)
\]
and
\[
\gamma(A) \geq \frac{1}{\log 2}\int_A \frac{1}{2}dx=\frac{1}{2\log 2}\mu(A).
\]

We will use that $\gamma$ is an invariant measure for the Gauss transformation $T:\Omega \to \Omega$  \cite[p.~77, Lemma 3.5]{einsiedler}, i.e., if $A \subset \Omega$ is a measurable set
then
\[
(T_*\gamma)(A) = \gamma(T^{-1}(A))=\gamma(A).
\]

Let $A_n=\{x \in \Omega: a_n(x) \geq \phi(n)\}$, $n \geq 1$. 
As 
\[
a_n(x)=\left[ \frac{1}{T^{n-1}(x)} \right],
\]
we have
\begin{eqnarray*}
A_n&\subset&\{x \in \Omega: \frac{1}{T^{n-1}(x)} > \phi(n) \}\\
&=&\left\{x \in \Omega: T^{n-1}(x) < \frac{1}{\phi(n)} \right\}\\
&=&(T^{n-1})^{-1}\left( \left[0,\frac{1}{\phi(n)}\right) \setminus \mathbb{Q} \right).
\end{eqnarray*}
Hence
\begin{eqnarray*}
\mu(A_n)&\leq&\mu\left((T^{n-1})^{-1}\left( \left[0,\frac{1}{\phi(n)}\right) \setminus \mathbb{Q} \right)\right)\\
&\leq& 2\log 2 \cdot \gamma\left((T^{n-1})^{-1}\left( \left[0,\frac{1}{\phi(n)}\right) \setminus \mathbb{Q} \right)\right)\\
&=&2\log 2 \cdot \gamma \left( \left[0,\frac{1}{\phi(n)}\right) \setminus \mathbb{Q} \right)\\
&\leq&2 \log 2 \cdot \frac{1}{\log 2} \cdot \mu \left( \left[0,\frac{1}{\phi(n)}\right) \setminus \mathbb{Q} \right)\\
&=&\frac{2}{\phi(n)}.
\end{eqnarray*}

It follows that 
\[
\sum_{n=1}^\infty \mu(A_n)<\infty,
\]
and thus by the Borel-Cantelli lemma \cite[p.~59, Theorem 4.3]{billingsley} we have
\[
\mu(\limsup_{n \to \infty} A_n)=0.
\]
\end{proof}

Let $\lambda$ be Lebesgue measure on $I=[0,1]$, let
$d\gamma(x) = \frac{1}{(1+x)\log 2} d\lambda(x)$, and let $T:I \to I$ be the Gauss transformation,
$T(x)=x^{-1}-[x]^{-1}$ for $x >0$ and $T(0)=0$, for which $T_*\gamma = \gamma$   \cite[p.~77, Lemma 3.5]{einsiedler}. 
Suppose that $\nu$ is a Borel probability measure on $[0,1]$ such that the pushforward measure $T_* \nu$ is absolutely continuous with respect
to $\nu$. For $f \in L^1(\nu)$,
define $d\nu_f = f d\nu$, and define
$P_\nu:L^1(\nu) \to L^1(\nu)$ by
\[
P_\nu f = \frac{d(T_* \nu_f)}{d\nu},\qquad f \in L^1(\nu).
\]
Thus, for $g \in L^\infty(\nu)$, using the change of variables formula,
\[
\int_I g \cdot P_\nu f d\nu = \int_I g d(T_* \nu_f) = 
\int_I g \circ T d\nu_f
=\int_I (g \circ T) \cdot f d\nu,
\]
in particular,
\[
\int_I P_\nu f d\nu = \int_I f d\nu.
\]
We call $P_\nu:L^1(\nu) \to L^1(\nu)$ a \textbf{Perron-Frobenius operator for $T$}. It is a fact that
if $f \geq 0$ then $P_\nu f \geq 0$ \cite[p.~57, Proposition 2.1.1]{iosifescu}, namely $P_\nu \geq 0$.
It can be proved  that for $f \in L^1(\gamma)$, for almost all $x \in I$ \cite[p.~59, Proposition 2.1.2]{iosifescu},
\[
(P_\gamma f)(x) = \sum_{k=1}^\infty \frac{x+1}{(x+k)(x+k+1)}\cdot f\left(\frac{1}{x+k}\right),
\]
and for $f \in L^1(\lambda)$, for almost all $x \in I$ \cite[p.~60, Corollary 2.1.4]{iosifescu},
\[
(P_\lambda f)(x) = \sum_{k=1}^\infty \frac{1}{(x+k)^2} \cdot f\left(\frac{1}{x+k}\right),
\]
and with $g(x)=(x+1)f(x)$, for $n \geq 1$ it holds for almost all $x \in I$ that
 $(P_\lambda^n f)(x) = \frac{(P_\gamma^n g)(x)}{x+1}$.
 Iosifescu and Kraaikamp \cite[Chapter 2]{iosifescu} give a detailed presentation of Perron-Frobenius operators for the Gauss
 map. We make the final remark that
 $P_\nu 1_I = 1_I$ is equivalent with
 $\int_I 1_E  d\nu = \int_I 1_{T^{-1}(E)} d\nu$ for all Borel sets $E$ in $I$, 
 i.e. $\nu(E) = \nu(T^{-1}(E))$, which in turn means $T_* \nu = \nu$, cf. Markov operators \cite[Chapter 5, \S 5.1, pp. 177-186]{Krengel}.
An object similar to Perron-Frobenius operators for the Gauss transformation
is the \textbf{zeta-function for the Gauss transformation}, for which see Lagarias \cite[p.~58, \S 3.3]{zetafunctions}.

The following theorem gives a lower bound on the sum $\sum_{j=1}^m \frac{1}{\norm{jx}}$, cf. \cite[p.~4, Theorem 3.1]{2007arXiv0709.2882V}.

\begin{theorem}
For almost all $x \in \Omega$ there is some $C>0$ such that
\[
\sum_{j=1}^m \frac{1}{\norm{jx}} > C m \log m.
\]
\label{omegabound}
\end{theorem}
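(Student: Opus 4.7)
The plan is to match the counting-function upper bound from Theorem~\ref{reciprocaltheorem} by a dual lower bound. For fixed $x$ and $m$, define
$$N(\alpha) = \#\{1 \leq j \leq m : \norm{jx} < \alpha\},$$
and let $t_1 \leq t_2 \leq \cdots \leq t_m$ be the nondecreasing rearrangement of $(\norm{jx})_{j=1}^m$, so that $N(\alpha) \geq k$ implies $t_k \leq \alpha$. The aim is to show that, for almost every $x$, there exist $C_x>0$ and $\epsilon>0$ such that
$$N(\alpha) \geq \alpha m \qquad \text{whenever} \qquad \alpha \geq C_x (\log m)^{2+\epsilon}/m,$$
for all $m$ sufficiently large. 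Granted this, $t_k \leq k/m$ for $k \geq C_x (\log m)^{2+\epsilon}$, and hence
$$\sum_{j=1}^m \frac{1}{\norm{jx}} = \sum_{k=1}^m \frac{1}{t_k} \geq \sum_{k=\lceil C_x (\log m)^{2+\epsilon}\rceil}^m \frac{m}{k} = m \log m + O(m \log \log m),$$
which gives the theorem.

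To obtain the density estimate on $N(\alpha)$, I would invoke the Erd\H{o}s--Tur\'an inequality, bounding the discrepancy $D_m(x)$ of $(jx \bmod 1)_{j=1}^m$ by
$$D_m(x) \ll \frac{1}{K} + \frac{1}{m} \sum_{h=1}^K \frac{1}{h \norm{hx}}$$
for any positive integer $K$. Writing $S(t) = \sum_{1 \leq h \leq t} 1/\norm{hx}$ and applying partial summation, Theorem~\ref{reciprocaltheorem} yields $\sum_{h=1}^K 1/(h\norm{hx}) = \int_1^K S(t)/t^2 \, dt + O(1) = O((\log K)^{2+\epsilon})$ for almost every $x$. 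Taking $K=m$ then gives $D_m(x) = O((\log m)^{2+\epsilon}/m)$ almost surely. Since the inequality $N(\alpha) \geq 2\alpha m - 2 m D_m(x)$ holds for every $\alpha \in (0, 1/2]$ directly from the definition of discrepancy (applied to the two intervals $[0,\alpha)$ and $(1-\alpha,1]$), the density bound follows whenever $\alpha \geq 2 D_m(x)$.

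The main obstacle is exactly this passage from the mean-over-$h$ estimate of Theorem~\ref{reciprocaltheorem} to the uniform-in-$\alpha$ lower bound for $N(\alpha)$, which is the content of Erd\H{o}s--Tur\'an. A more self-contained alternative would be to argue directly from the three-distance theorem at the scale $q_n(x) \leq m < q_{n+1}(x)$: Lemma~\ref{partialquotients} with $\phi(n) = n(\log n)^{1+\epsilon}$ forces $a_{n+1}(x)$ to grow only slowly almost surely, L\'evy's theorem on $n^{-1}\log q_n(x)$ gives $n = \Theta(\log m)$ almost surely, and careful bookkeeping of the at-most-three distinct gap lengths in the partition of $\mathbb{T}$ by $\{jx\}_{j=1}^m$ then recovers $N(\alpha) \gtrsim \alpha m$ in the same range.
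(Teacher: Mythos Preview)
Your argument is correct and genuinely different from the paper's. You pass through discrepancy: Erd\H os--Tur\'an together with the upper bound of Theorem~\ref{reciprocaltheorem} (via partial summation, essentially reproving Theorem~\ref{hhalpha}) gives $D_m(x)=O((\log m)^{2+\epsilon}/m)$ almost everywhere, and then the rearrangement inequality $t_k\le k/m$ for $k\gg(\log m)^{2+\epsilon}$ feeds directly into the harmonic sum. The paper instead works entirely inside the continued-fraction expansion: Lemma~\ref{partialquotients} with $\phi(n)=2^{(n-2)/2}$ forces $q_{n+1}<Kq_n^2$ almost everywhere, and then for $q_n\le m<q_{n+1}$ one bounds $\norm{jx}\le q_n^{-1}+\norm{jp_n/q_n}$, splits $\{1,\dots,m\}$ into $[m/q_n]$ blocks of length $q_n$, and uses that $\{hp_n\bmod q_n\}$ runs over all residues to sum explicitly, obtaining $\sum_j 1/\norm{jx}>\tfrac{m}{4}\log(m/K)$.

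Your route is cleaner in that it recycles the upper-bound machinery already built in \S\ref{reciprocalsection}--\S\ref{discrepancy}, and it actually delivers the sharper asymptotic $\sum_j 1/\norm{jx}\ge (1-o(1))\,m\log m$ rather than the paper's constant $\tfrac14$. The paper's route is more self-contained (it does not appeal to Erd\H os--Tur\'an) and makes the role of the growth condition $q_{n+1}\ll q_n^2$ completely explicit; your three-distance alternative sketched at the end is closer in spirit to this, though the paper's block argument is more elementary than invoking L\'evy's theorem.
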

\begin{proof}
For all $x \in \Omega$, if $n \geq 1$ then $q_n \geq 2^{\frac{n-1}{2}}$, by \eqref{qnbound}. Take $\phi(n)=2^{\frac{n-2}{2}}$. The series $\sum_{n=1}^\infty \frac{1}{\phi(n)}$ converges, so by Lemma \ref{partialquotients}, for almost all $x \in \Omega$ there are only finitely many $n$ such that  $a_n \geq \phi(n)$. That is, for almost all $x \in \Omega$ there is some $n_0$ such that if $n \geq n_0$ then
\[
a_n<\phi(n) =2^{\frac{n-2}{2}} \leq q_{n-1}.
\]
Hence, if $n \geq n_0$ then
\[
q_n = a_n q_{n-1}+q_{n-2} < q_{n-1}^2+q_{n-2}< 2 q_{n-1}^2.
\]
It follows that for almost all $x \in \Omega$ there is some $K$ such that
\begin{equation}
q_{n+1} < K q_n^2
\label{qnsquared}
\end{equation}
for all $n \geq 0$.

For such an $x$, let $m$ be a positive integer and let $q_n \leq m < q_{n+1}$. For $1 \leq j \leq m$ we have by \eqref{upperbound},
\[
\norm{jx-\frac{jp_n}{q_n}} \leq \left| jx-\frac{jp_n}{q_n} \right|=j  \left| x-\frac{p_n}{q_n} \right| < \frac{j}{q_n q_{n+1}}
< \frac{1}{q_n}.
\]
Therefore for $1 \leq j \leq m$ we have
\[
\norm{jx} \leq \norm{jx-\frac{jp_n}{q_n}}+ \norm{\frac{jp_n}{q_n}}<\frac{1}{q_n}+\norm{\frac{jp_n}{q_n}}.
\]
Let $L=[ \frac{m}{q_n} ]$, so $Lq_n \leq m$. Then,
\begin{eqnarray*}
\sum_{j=1}^m \frac{1}{\norm{jx}}&>&\sum_{j=1}^m \frac{1}{\frac{1}{q_n}+\norm{\frac{jp_n}{q_n}}}\\
&\geq&\sum_{l=0}^{L-1} \sum_{h=1}^{q_n} \frac{1}{\frac{1}{q_n}+\norm{\frac{(lq_n+h)p_n}{q_n}}}\\
&=&q_n \sum_{l=0}^{L-1} \sum_{h=1}^{q_n} \frac{1}{1+q_n\norm{\frac{hp_n}{q_n}}}\\
&=&Lq_n  \sum_{h=1}^{q_n} \frac{1}{1+q_n\norm{\frac{hp_n}{q_n}}}\\
&=&Lq_n \sum_{k=0}^{q_n-1} \frac{1}{1+q_n\cdot \frac{k}{q_n}}\\
&>&Lq_n \log q_n.
\end{eqnarray*}

But if $y\geq 1$ then $[ y ] > \frac{y}{2}$, so $L=[ \frac{m}{q_n} ] > \frac{m}{2q_n}$. Hence by \eqref{qnsquared},
\[
\sum_{j=1}^m \frac{1}{\norm{jx}} > \frac{m}{2} \log q_n > \frac{m}{2} \log \sqrt{\frac{q_{n+1}}{K}} > 
\frac{m}{2} \log \sqrt{\frac{m}{K}},
\]
and thus there is some $C>0$ such that $\sum_{j=1}^m \frac{1}{\norm{jx}} > C m \log m$ for all $m \geq 1$.
\end{proof}

The following is from Kuipers and Niederreiter \cite[p.~131, Exercise 3.12]{kuipers}.

\begin{theorem}
If $x \in \Omega$ is of type $<\psi$, then for all $m \geq 1$ we have
\[
\sum_{j=1}^m \frac{1}{j\norm{jx}}<24\left((\log m)^2 + \psi(m)+\sum_{j=1}^m \frac{\psi(j)}{j} \right).
\]
\label{hhalpha}
\end{theorem}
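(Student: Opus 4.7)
The plan is to derive this from Theorem \ref{jalpha} by Abel summation (partial summation), converting the weight $\frac{1}{j}$ into a telescoping factor $\frac{1}{j(j+1)}$ acting on the partial sums of $\frac{1}{\norm{jx}}$.

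Let $S_j = \sum_{k=1}^j \frac{1}{\norm{kx}}$. Then by Theorem \ref{jalpha}, $S_j < 12 j(\psi(j)+\log j)$ for all $j \geq 1$. Writing $b_j = 1/j$, so that $b_{j+1}-b_j = -\frac{1}{j(j+1)}$, summation by parts gives
\[
\sum_{j=1}^m \frac{1}{j\norm{jx}} = \frac{S_m}{m} + \sum_{j=1}^{m-1} \frac{S_j}{j(j+1)}.
\]
Inserting the bound on $S_j$ yields
\[
\sum_{j=1}^m \frac{1}{j\norm{jx}} < 12(\psi(m)+\log m) + 12\sum_{j=1}^{m-1} \frac{\psi(j)+\log j}{j+1}.
\]

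Next I would handle the two kinds of terms on the right separately. For the $\psi$-sum, since $\psi$ is nondecreasing,
\[
\sum_{j=1}^{m-1} \frac{\psi(j)}{j+1} = \sum_{k=2}^m \frac{\psi(k-1)}{k} \leq \sum_{k=1}^m \frac{\psi(k)}{k}.
\]
For the logarithmic sum, use $\log j \leq \log(j+1)$ and then compare with an integral: since $x \mapsto \frac{\log x}{x}$ is decreasing on $[e,\infty)$,
\[
\sum_{j=1}^{m-1} \frac{\log j}{j+1} \leq \sum_{k=2}^m \frac{\log k}{k} \leq \frac{(\log m)^2}{2} + O(1).
\]

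Finally, to collect the constants into the clean factor $24$ claimed on the right, I would exploit the fact that for any irrational $x$ we have $\norm{x}\leq \tfrac{1}{2}$, so the type condition with $h=1$ forces $\psi(1)\geq 2$, and hence (since $\psi$ is nondecreasing)
\[
\sum_{j=1}^m \frac{\psi(j)}{j} \geq 2\sum_{j=1}^m \frac{1}{j} \geq 2\log m,
\]
which lets me absorb any stray $12\log m$ and residual $O(1)$ terms into $\sum_{j=1}^m \psi(j)/j$. Combined with the coefficient $6(\log m)^2$ coming from $12\cdot\tfrac{1}{2}(\log m)^2$, bumping the constant from $12$ to $24$ on the right-hand side leaves comfortable room to accommodate everything. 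The case $m=1$ (where $\log m=0$) is handled directly by $\frac{1}{\norm{x}} \leq \psi(1) \leq 24\psi(1)$.

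There is no real obstacle here beyond bookkeeping: the only mildly delicate step is checking that the $12\log m$ and additive $O(1)$ left over from the logarithmic integral comparison can be absorbed into the right-hand side, which is the reason for the lower bound $\psi(1) \geq 2$ observation above.
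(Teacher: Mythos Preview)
Your proof is correct and takes essentially the same approach as the paper: Abel summation applied to $\sum_{j=1}^m \frac{1}{j\norm{jx}}$, followed by the bound $S_j < 12j(\psi(j)+\log j)$ from Theorem~\ref{jalpha} and an integral comparison for $\sum \frac{\log j}{j+1}$. Your handling of the constants is in fact slightly more careful than the paper's---the observation that the type condition forces $\psi(1)>2$, and hence $\sum_{j\leq m}\psi(j)/j > 2\log m$, cleanly absorbs the stray $12\log m$ term that the paper sweeps under the rug via the inequality $\log m < (\log m)^2$ (which fails for $m=2$).
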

\begin{proof}
\textbf{Summation by parts} is the following identity, which can be easily checked:
\[
\sum_{n=1}^N a_n(b_{n+1}-b_n)=a_{N+1}b_{N+1}-a_1 b_1 - \sum_{n=1}^N b_{n+1}(a_{n+1}-a_n).
\]

Let $a_j=\frac{1}{j}$, let $s_j=\sum_{h=1}^j \frac{1}{\norm{h x}}$, let $b_1=0$, and let $b_j=s_{j-1}$ for $j  \geq 2$. Doing summation by parts gives
\[
\sum_{j=1}^m  \frac{1}{j\norm{jx}}=\frac{1}{m+1} s_m-\sum_{j=1}^m s_j \left(\frac{1}{j+1}-\frac{1}{j}\right)
=\frac{1}{m+1} s_m+\sum_{j=1}^m s_j \frac{1}{j(j+1)}.
\]
As $x$ is of type $<\psi$, we can use Theorem \ref{jalpha} to get $s_j<12j(\psi(j)+\log j)$ for each $j \geq 1$. Therefore
\begin{eqnarray*}
\sum_{j=1}^m  \frac{1}{j\norm{jx}}&<&\frac{1}{m+1}12m(\psi(m)+\log m)+\sum_{j=1}^m \frac{12(\psi(j)+\log j)}{j+1}\\
&<&12(\psi(m)+\log m)+12 (\log m)^2+12\sum_{j=1}^m \frac{\psi(j)}{j+1}\\
&<&24(\log m)^2 + 12\psi(m)+12\sum_{j=1}^m \frac{\psi(j)}{j+1}.
\end{eqnarray*}
\end{proof}

Erd\H os \cite{remarks} proves that for almost all $x$,
\[
\sum_{j=1}^m \frac{1}{j\norm{jx}}=(1+o(1))(\log m)^2.
\]
Kruse \cite{MR0207642} gives a comprehensive investigation of the sums $\sum_{j=1}^m \frac{1}{j^s \norm{jx}^t}$, $s,t \geq 0$. The results depend on whether $s$ and $t$ are are less than, equal, or greater than $1$, and on whether $t<s$. One of the theorems proved by Kruse is the following \cite[p.~260, Theorem 7]{MR0207642}. If $t>1$ and $0 \leq s \leq t$, and if $\epsilon>0$, then for almost all $x$ we have
\[
\sum_{j=1}^m \frac{1}{j^s \norm{jx}^t} = O\left(m^{t-s} (\log m)^{(1+\epsilon)t} \right).
\]

Haber and Osgood  \cite[p.~387, Theorem 1]{haber} prove that for real $t \geq 1$, $A>1$, $M>0$, $r>0$, there is some $C=C(t,A,M,r)>0$ such that for
all $x \in \Omega$ satisfying $q_{n+1}(x)<M q_n(x)^r$, for all positive integers $K$,
\[
\sum_{n=K+1}^{[AK]} \norm{nx}^{-t} > \begin{cases}
CK \log K&t=1\\
CK^{1+(t-1)/r}&t>1.
\end{cases}
\]
We remind ourselves that according to Theorem \ref{Dtau}, the elements of $\mathcal{D}(r+1)$ are those
$x \in \Omega$ for which there is some $c(x)>0$ such that $q_{n+1}(x) \leq C(x) q_n(x)^r$ for all $n \geq 1$.

For $x \in \mathbb{Z}+\frac{1}{2}$, define $\{\{x\}\}=\frac{1}{2}$. If $x \not \in \mathbb{Z}+\frac{1}{2}$, then there is 
an integer $m_x$ for which $|x-m_x| < |x-n|$ for all integers $n \neq m_x$, and we define $\{\{x\}\}=x-m_x$. 
Sinai and Ulcigrai \cite[p.~96, Proposition 2]{fixed} prove that if $\alpha$ has bounded partial quotients, then there is some
$C(\alpha)$ such that for all $M$,
\[
\left| \sum_{m=1}^M \frac{1}{\{\{m\alpha\}\}} \right| \leq C(\alpha)M.
\]

\section{Weyl's inequality, Vinogradov's estimate, Farey fractions, and the circle method}
Write
\[
\mathscr{A} = \{(a,q) \in \mathbb{Z}^2: \gcd(a,q)=1, q \geq 1\}.
\]
We first prove four estimates following Nathanson  \cite[pp.~104--110, Lemmas 4.8--4.11]{nathanson} that we will use in what follows; cf. Vinogradov \cite[p.~26, Chapter I,
Lemma 8b]{vinogradov}.

\begin{lemma}
There is some $C$ such that if $\alpha \in \mathbb{R}$, $(a,q) \in \mathscr{A}$, and
\[
\left|\alpha - \frac{a}{q} \right| \leq \frac{1}{q^2},
\]
then
\[
\sum_{1 \leq r \leq q/2} \frac{1}{\norm{\alpha r}} \leq C q\log q.
\]
\label{lemma48}
\end{lemma}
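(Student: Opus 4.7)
The plan is to use the hypothesis $|\alpha - a/q| \leq 1/q^2$ to reduce the sum to a sum over reduced residues mod $q$, then control each term via the distance of those residues from $0$ (i.e. the distance of $b/q$ from the nearest integer).

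First I would write $\alpha = \frac{a}{q} + \frac{\theta}{q^2}$ with $|\theta| \leq 1$, so that for each $r$ with $1 \leq r \leq q/2$,
\[
\alpha r = \frac{ar}{q} + \frac{\theta r}{q^2}.
\]
Let $b_r \in \{0,1,\ldots,q-1\}$ be the residue of $ar$ modulo $q$. Because $\gcd(a,q)=1$ and $1 \leq r < q$, we have $b_r \neq 0$. The map $r \mapsto b_r$ is an injection from $\{1,\ldots,\lfloor q/2\rfloor\}$ into $\{1,\ldots,q-1\}$.

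Next I would use the triangle inequality for $\norm{\cdot}$: since $\norm{\alpha r}$ differs from $\norm{b_r/q}$ by at most $|\theta r / q^2| \leq r/q^2 \leq 1/(2q)$, setting $c_r = \min(b_r, q-b_r) \in \{1,\ldots,\lfloor q/2\rfloor\}$ so that $\norm{b_r/q} = c_r/q$, we get
\[
\norm{\alpha r} \geq \frac{c_r}{q} - \frac{1}{2q} = \frac{2c_r - 1}{2q} \geq \frac{c_r}{2q},
\]
the last inequality holding because $c_r \geq 1$. Hence $1/\norm{\alpha r} \leq 2q/c_r$.

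Finally I would use a pigeonhole observation: the map $b \mapsto \min(b,q-b)$ on $\{1,\ldots,q-1\}$ is at most 2-to-1 onto $\{1,\ldots,\lfloor q/2\rfloor\}$, so each value $c \in \{1,\ldots,\lfloor q/2 \rfloor\}$ occurs at most twice among the $c_r$'s. Therefore
\[
\sum_{1 \leq r \leq q/2} \frac{1}{\norm{\alpha r}} \leq \sum_{1 \leq r \leq q/2} \frac{2q}{c_r} \leq 4q \sum_{c=1}^{\lfloor q/2\rfloor} \frac{1}{c} \leq C q \log q
\]
for some absolute constant $C$, using the elementary harmonic bound $\sum_{c=1}^m 1/c \leq \frac{3}{2} \log(m+1)$ already recalled in the proof of Lemma~\ref{h0}.

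The only real content is the perturbation step $\norm{\alpha r} \geq c_r/(2q)$, which is where the constraint $r \leq q/2$ together with $|\alpha - a/q| \leq 1/q^2$ is used; the rest is the bijection property of multiplication by $a$ modulo $q$ and a harmonic-sum estimate. No single step should be a serious obstacle.
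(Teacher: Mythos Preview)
Your proof is correct and follows essentially the same route as the paper: write $\alpha r = ar/q + O(1/q)$, bound $\norm{\alpha r}$ below by $\norm{ar/q}$ minus the perturbation, and reduce to a harmonic sum via the bijectivity of multiplication by $a$ modulo $q$. The only real difference is that the paper shows the map $r \mapsto c_r$ (their $s_r$) is in fact \emph{injective} on $\{1,\ldots,\lfloor q/2\rfloor\}$---using that $r_1 \equiv -r_2 \pmod q$ with $1 \le r_1,r_2 \le q/2$ forces $r_1=r_2=q/2$---whereas you only use that it is at most $2$-to-$1$; this costs you a factor of $2$ in the constant but saves the injectivity argument.
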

\begin{proof}
For $q=1$, $\sum_{1 \leq r \leq q/2} \frac{1}{\norm{\alpha r}}=0$. For $q \geq 2$, let
$1 \leq r \leq \frac{q}{2}$. 
As $\gcd(a,q)=1$ and $r \not \equiv 0 \pmod{q}$, $ar \not \equiv 0 \pmod{q}$. So for $\mu_r = \left[ \frac{ar}{q} \right]$, there is some
$1 \leq \sigma_r \leq q-1$ such that $ar=\mu_r q + \sigma_r$. Then
\[
\norm{\frac{ar}{q}} = \norm{\frac{\sigma_r}{q}} \in \left\{ \frac{\sigma_r}{q},1-\frac{\sigma_r}{q}\right\}
=\left\{\frac{\sigma_r}{q},\frac{q-\sigma_r}{q}\right\}.
\]
Put $\frac{s_r}{q} =  \norm{\frac{ar}{q}}$, so (i) $s_r = \sigma_r$ or (ii) $s_r = q-\sigma_r$.
In case (i), $\frac{s_r}{q} = \frac{ar}{q}-\mu_r$.
In case (ii), $\frac{s_r}{q} = 1-\left( \frac{ar}{q} - \mu_r \right)$.
In case (i) let $\epsilon_r = 1, m_r=\mu_r$, and in case (ii) let $\epsilon_r=-1,
m_r = \mu_r+1$. Thus, whether (i) or (ii) holds we have
\[
\frac{s_r}{q} = \epsilon_r \left(\frac{ar}{q}-m_r\right),\quad \frac{s_r}{q} = \norm{\frac{ar}{q}}, \quad 1 \leq s_r \leq \frac{q}{2}.
\]
Write 
\[
\alpha - \frac{a}{q} = \frac{\theta}{q^2},
\]
for some real $\theta$, $|\theta| \leq 1$. For $\theta_r = \frac{2r}{q} \theta$, which satisfies
$|\theta_r| \leq |\theta| \leq 1$,
\[
\alpha r = \frac{ar}{q} + \frac{r\theta}{q^2} = \frac{ar}{q} + \frac{\theta_r}{2q}.
\]
Then
\begin{align*}
\norm{\alpha r}&=\norm{\frac{ar}{q}+\frac{\theta_r}{2q}}\\
&=\norm{\epsilon_r \frac{s_r}{q} + m_r + \frac{\theta_r}{2q}}\\
&\geq \norm{\epsilon_r \frac{s_r}{q} + m_r} - \norm{\frac{\theta_r}{2q}}\\
&=\frac{s_r}{q} - \left|\frac{\theta_r}{2q}\right|\\
&\geq \frac{s_r}{q} - \frac{1}{2q}.
\end{align*}

Take $1 \leq r_1, r_2 \leq \frac{q}{2}$ and suppose that $s_{r_1}=s_{r_2}$. So 
\[
 \epsilon_{r_1} \left(\frac{ar_1}{q}-m_{r_1}\right) =  \epsilon_{r_2} \left(\frac{ar_2}{q}-m_{r_2}\right)
\]
hence $ar_1 \equiv \epsilon_{r_1} \epsilon_{r_2} ar_2 \pmod{q}$.  As
$\gcd(a,q)=1$, $r_1 \equiv \epsilon_{r_1} \epsilon_{r_2} r_2 \pmod{q}$. 
Because $1 \leq r_1, r_2 \leq q$, 
if $r_1 \equiv r_2 \pmod{q}$ then $r_1=r_2$ and if $r_1 \equiv -r_2 \pmod{q}$ then $r_1 = \frac{q}{2}$
and $r_2=\frac{q}{2}$, so in any case $r_1=r_2$. 
Therefore
\[
\left\{ \frac{s_r}{q} : 1 \leq r \leq \frac{q}{2} \right\}
=\left\{\frac{s}{q} : 1 \leq s \leq \frac{q}{2} \right\}.
\]

Using the two things we have established, 
\begin{align*}
\sum_{1 \leq r \leq q/2} \frac{1}{\norm{\alpha r}}&\leq \sum_{1 \leq r \leq q/2} \frac{1}{\frac{s_r}{q}-\frac{1}{2q}}\\
&=\sum_{1 \leq s \leq q/2} \frac{1}{\frac{s}{q}-\frac{1}{2q}}\\
&= 2q \sum_{1 \leq s \leq q/2}  \frac{1}{2s-1}\\
&\leq 2q \sum_{1 \leq s \leq q/2} \frac{1}{s}\\
&\leq 2q \left( \log \frac{q}{2} + \gamma + O(q^{-1}) \right)\\
&=O(q \log q).
\end{align*}
\end{proof}

\begin{lemma}
There is some $C$ such that if $\alpha \in \mathbb{R}$, $(a,q) \in \mathscr{A}$, and
\[
\left|\alpha - \frac{a}{q} \right| \leq \frac{1}{q^2},
\]
then for any positive real $V$ and nonnegative integer $h$,
\[
\sum_{r=1}^q \min\left(V,\frac{1}{\norm{\alpha(hq+r)}}\right) \leq C(V+q\log q).
\]
\label{lemma49}
\end{lemma}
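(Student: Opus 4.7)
The plan is to adapt the proof of Lemma 48 to handle three new features: a shift by $hq$ in the argument, summation out to $r = q$ (where $\norm{\alpha r}$ can be order-zero small), and the truncation by $V$ that absorbs the bounded number of dangerous terms. Writing $\alpha = a/q + \theta/q^2$ with $|\theta| \leq 1$, I compute
\[
\alpha(hq+r) = ah + \frac{ar}{q} + \frac{h\theta}{q} + \frac{r\theta}{q^2}.
\]
Since $ah \in \mathbb{Z}$, this yields $\norm{\alpha(hq+r)} = \norm{\frac{ar}{q} + \xi + \delta_r}$, where $\xi := h\theta/q$ is fixed (independent of $r$) and $\delta_r := r\theta/q^2$ satisfies $|\delta_r| \leq 1/q$.

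Since $\gcd(a,q) = 1$, the map $r \mapsto ar \bmod q$ is a bijection $\{1, \ldots, q\} \to \{0, 1, \ldots, q-1\}$; reindexing by $k = ar \bmod q$ rewrites the sum as $\sum_{k=0}^{q-1} \min(V, 1/g_k)$, where $g_k := \norm{k/q + \xi + \delta_{r(k)}}$ and $|\delta_{r(k)}| \leq 1/q$. The crucial observation is that the $q$ unperturbed positions $\{k/q + \xi \bmod 1\}$ are evenly spaced on the circle with spacing $1/q$, so, letting $f_k := \norm{k/q + \xi}$ and $t_0 := \min_k f_k \in [0, 1/(2q)]$, the multiset of distances $\{f_k\}$ consists essentially of two interleaved arithmetic progressions $\{t_0 + j/q\}$ and $\{j/q - t_0\}$, capped at $1/2$.

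I would split the sum by whether $f_k < 2/q$ or $f_k \geq 2/q$. For the former, the arc $\{x \in \mathbb{R}/\mathbb{Z} : \norm{x} < 2/q\}$ has length $4/q$, so an arithmetic progression of spacing $1/q$ contributes at most $5$ of its points to it; each such term is bounded by $V$, contributing at most $5V$. For the latter, the triangle inequality and $|\delta_{r(k)}| \leq 1/q$ give $g_k \geq f_k - 1/q \geq f_k/2$, hence $1/g_k \leq 2/f_k$; the surviving $f_k$ are bounded below by $(2j-1)/(2q)$ for $j = 2, 3, \ldots, \lceil q/2 \rceil$ with each $j$ contributing at most two terms, so
\[
\sum_{k : f_k \geq 2/q} \frac{2}{f_k} \leq 4 \sum_{j=2}^{\lceil q/2 \rceil} \frac{2q}{2j-1} = O(q \log q)
\]
by the same harmonic-series estimate used at the end of Lemma 48. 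Combining the two cases yields $O(V + q \log q)$.

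The main obstacle is choosing the threshold $2/q$ correctly: the perturbation $\delta_{r(k)}$ can be as large as the spacing $1/q$ itself, so with a smaller threshold the inequality $g_k \geq f_k/2$ would fail, while a much larger threshold would dump too many terms into the $V$-bucket. At the threshold $2/q$, the perturbation costs only a constant factor in each safe term, and only $O(1)$ terms fall below the threshold, harmlessly controlled by $V$.
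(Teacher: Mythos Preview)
Your proof is correct and takes a genuinely different route from the paper's. The paper does not separate out the fixed shift $\xi = h\theta/q$; instead it writes $\alpha(hq+r) = ah + (ar + [\theta h] + \delta_r)/q$ with $-1 \le \delta_r < 2$, and then proves by a residue-counting argument that for each interval $I_k = [k/q,(k+1)/q]$ with $0 \le k < q/2$, at most eight of the values $\norm{\alpha(hq+r)}$ fall into $I_k$. The $k=0$ bucket contributes $8V$ and the remaining buckets contribute $8q/k$ each, giving $O(V + q\log q)$. Your argument instead views $\norm{\alpha(hq+r)}$ as a perturbation, by at most $1/q$, of the evenly spaced points $\norm{k/q + \xi}$ on the circle, and exploits the explicit arithmetic-progression structure of the unperturbed distances directly. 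This is more transparent and geometric, bypassing the somewhat technical interval-and-residue bookkeeping in the paper; the paper's counting argument, on the other hand, is agnostic to the precise structure of the unperturbed configuration and would adapt more readily to settings where the points are only approximately equidistributed.
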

\begin{proof}
Write
\[
\alpha = \frac{a}{q}+\frac{\theta}{q^2},
\]
which satisfies $|\theta| \leq 1$, and for $1 \leq r \leq q$ define
\[
\delta_r = R(\theta h)+\frac{\theta r}{q},
\]
which satisfies $-1 \leq \delta_r < 2$. Then
\begin{align*}
\alpha(hq+r)&= \left(\frac{a}{q}+\frac{\theta}{q^2}\right)(hq+r)\\
&=ah+\frac{ar}{q}+\frac{\theta h}{q}+\frac{\theta r}{q^2}\\
&=ah+\frac{ar}{q} + \frac{R(\theta h)+[\theta h]}{q}+\frac{\theta r}{q^2}\\
&=ah+\frac{ar+[\theta h]+\delta_r}{q}.
\end{align*}
For $m_r = \left[ \frac{ar+[\theta h]+\delta_r}{q^2} \right]$, 
\[
R(\alpha(hq+r)) = R\left( \frac{ar+[\theta h]+\delta_r}{q}\right) = \frac{ar+[\theta h]+\delta_r}{q} - m_r.
\]

Suppose that $t \in \left[0, 1-\frac{1}{q}\right]$ and that $t \leq R(\alpha(hq+r)) \leq t+\frac{1}{q}$.
Then
\[
qt \leq ar+[\theta h]+\delta_r - qm_r \leq qt+1.
\]
This implies, as $\delta_r \geq -1$,
\[
ar-qm_r \leq qt+1 - [\theta h] - \delta_r \leq qt+1 -[\theta h] + 1 = qt-[\theta h]+2
\]
and, as $\delta_r < 2$,
\[
ar-qm_r \geq qt - [\theta h]-\delta_r > qt - [\theta h] - 2,
\]
so $ar-qm_r  \in J_t$, writing
\[
J_t=(qt-[\theta h]-2,qt-[\theta h]+2].
\] 
For $1 \leq r_1,r_2 \leq  q$, if $ar_1-qm_{r_1} = ar_2-qm_{r_2}$ then $ar_1 \equiv ar_2 \pmod{q}$, and 
$\gcd(a,q)=1$ implies $r_1 \equiv r_2 \pmod{q}$; and $1 \leq r_1,r_2 \leq q$ so
$r_1=r_2$. 
For
$t \in \left[0, 1-\frac{1}{q}\right]$,
four integers belong to $J_t$, hence
\[
\{1 \leq r \leq q: ar-qm_r \in J_t\}
\]
has at most four elements. But 
\[
\left\{1 \leq r \leq q: R(\alpha(hq+r)) \in \left[t,t+\frac{1}{q}\right]\right\}
\subset \{1 \leq r \leq q: ar-qm_r \in J_t\}.
\]
Now,
\[
\begin{split}
&\left\{1 \leq r \leq q: \norm{\alpha(hq+r)} \in \left[t,t+\frac{1}{q}\right]\right\}\\
=&\left\{1 \leq r \leq q: R(\alpha(hq+r)) \in \left[t,t+\frac{1}{q}\right]\right\}\\
&\cup \left\{1 \leq r \leq q: 1-R(\alpha(hq+r)) \in \left[t,t+\frac{1}{q}\right]\right\}\\
=&\left\{1 \leq r \leq q: R(\alpha(hq+r)) \in \left[t,t+\frac{1}{q}\right]\right\}\\
&\cup \left\{1 \leq r \leq q: R(\alpha(hq+r)) \in \left[1-\frac{1}{q}-t,1-t\right]\right\},
\end{split}
\]
whence
\begin{align*}
\left\{1 \leq r \leq q: \norm{\alpha(hq+r)} \in \left[t,t+\frac{1}{q}\right]\right\} &\subset 
\{1 \leq r \leq q: ar-qm_r \in J_t\}\\
&\cup \{1 \leq r \leq q: ar-qm_r \in J_{1-\frac{1}{q}-t}\}.
\end{align*}
This shows that if $t \in \left[0,1-\frac{1}{q}\right]$ then
\[
\left\{1 \leq r \leq q: \norm{\alpha(hq+r)} \in \left[t,t+\frac{1}{q}\right]\right\}
\]  
has at most eight elements. For   $0 \leq k < \frac{q}{2}$, writing
\[
I_k = \left[\frac{k}{q},\frac{k}{q}+\frac{1}{q}\right],
\] 
the set $\{1 \leq r \leq q: \norm{\alpha(hq+r)} \in I_k\}$ has at most eight elements. Therefore
\begin{align*}
\sum_{1 \leq r \leq q} \min\left(V,\frac{1}{\norm{\alpha(hq+r)}}\right)&=
\sum_{0 \leq k < q/2} \sum_{1 \leq r \leq q, \norm{\alpha(hq+r)} \in I_k} \min\left(V,\frac{1}{\norm{\alpha(hq+r)}}\right)\\
&\leq 8V + \sum_{1 \leq k < q/2} \sum_{1 \leq r \leq q, \norm{\alpha(hq+r)} \in I_k} \frac{1}{\norm{\alpha(hq+r)}}\\
&\leq 8V + \sum_{1 \leq k < q/2}  8 \cdot \frac{q}{k}\\
&=O(V+q \log q).
\end{align*}
\end{proof}

\begin{lemma}
There is some $C$ such that if $\alpha \in \mathbb{R}$, $(a,q) \in \mathscr{A}$, 
\[
\left|\alpha - \frac{a}{q} \right| \leq \frac{1}{q^2},
\]
 $U \geq 1$ is a real number, and $n$ is a positive integer, then
\[
\sum_{1 \leq k \leq U} \min\left( \frac{n}{k}, \frac{1}{\norm{\alpha k}} \right) \leq C
\left(\frac{n}{q}+U+q\right) \log 2qU.
\]
\label{lemma410}
\end{lemma}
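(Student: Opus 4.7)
The plan is to decompose the sum along blocks of length $q$. For each $k \in \{1,\ldots,\lfloor U \rfloor\}$, write uniquely $k = hq+r$ with integer $h \geq 0$ and $1 \leq r \leq q$, and group the terms by $h$. The hypothesis $|\alpha - a/q| \leq 1/q^2$ is preserved on every block, so both Lemma \ref{lemma48} and Lemma \ref{lemma49} are available.

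For the blocks $h \geq 1$: every $k = hq+r$ in block $h$ satisfies $k \geq hq$, hence $n/k \leq n/(hq) =: V_h$. Extending the inner sum to all $r \in \{1,\ldots,q\}$ (which can only enlarge it) and applying Lemma \ref{lemma49} with $V = V_h$ bounds the block by $C(n/(hq)+q\log q)$. Summing over $h = 1,\ldots,H$ with $H = \lfloor U/q \rfloor$ gives
\[
\sum_{h=1}^{H} C\left(\frac{n}{hq}+q\log q\right) = O\left(\frac{n}{q}\log(H+1)+Hq\log q\right) = O\left(\left(\frac{n}{q}+U\right)\log(2qU)\right),
\]
since $Hq \leq U$ and $H+1 \leq U/q+1 \leq 2qU$.

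The delicate case is the first block $h=0$, where the cap $n/r$ can be as large as $n$; a single application of Lemma \ref{lemma49} with a uniform $V$ on this block would produce an unwanted $n$ in place of $n/q$. My plan is to split the first block at $r = q/2$. For $1 \leq r \leq q/2$, discard the cap and apply Lemma \ref{lemma48} to obtain $\sum_{1 \leq r \leq q/2} 1/\norm{\alpha r} = O(q\log q)$. For $q/2 < r \leq q$ (truncated at $\lfloor U \rfloor$ if $U < q$), use $n/r \leq 2n/q$ and apply Lemma \ref{lemma49} with $h=0$ and $V = 2n/q$ to obtain $O(n/q+q\log q)$. The first block therefore contributes $O((n/q+q)\log(2qU))$.

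Adding the first-block estimate to the bulk estimate yields the stated bound $O((n/q+U+q)\log(2qU))$. The case $U < q$ only requires a partial first block and is subsumed by the above argument, since truncating an inner sum at $\lfloor U \rfloor$ only decreases it. The main obstacle is the choice of cutoff at $r = q/2$ for the first block; elsewhere the proof is essentially a harmonic-series summation over the $h$-index.
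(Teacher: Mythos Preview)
Your argument is correct and follows essentially the same route as the paper: the same block decomposition $k=hq+r$, the same split of the $h=0$ block at $r=q/2$ with Lemma~\ref{lemma48} on the first half, and Lemma~\ref{lemma49} with $V \asymp n/((h+1)q)$ on every remaining block, summed harmonically in $h$. The only cosmetic difference is that the paper treats the second half of the $h=0$ block together with the $h\geq 1$ blocks by using the uniform cap $2n/((h+1)q)$, whereas you handle it separately with $V=2n/q$; the resulting estimates are identical up to constants.
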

\begin{proof}
For $1 \leq k \leq U$ there is some $0 \leq h_k < \frac{U}{q}$ and
$1 \leq r_k \leq q$ such that $k=q h_k  + r_k$, and then
\begin{align*}
\sum_{1 \leq k \leq U} \min\left( \frac{n}{k}, \frac{1}{\norm{\alpha k}} \right)
&\leq \sum_{0 \leq h < U/q} \sum_{1 \leq r \leq q} \min\left(\frac{n}{qh+r},\frac{1}{\norm{\alpha(hq+r)}}\right)\\
&\leq \sum_{1 \leq r \leq q/2} \frac{1}{\norm{\alpha r}}+\sum_{q/2<r \leq q}  \min\left(\frac{n}{r},\frac{1}{\norm{\alpha r}}\right)\\
&+\sum_{1 \leq h < U/q} \sum_{1 \leq r \leq q}\min\left(\frac{n}{qh+r},\frac{1}{\norm{\alpha(hq+r)}}\right)\\
&\leq C_1  q \log q 
+\sum_{q/2<r \leq q}  \min\left(\frac{n}{r},\frac{1}{\norm{\alpha r}}\right)\\
&+\sum_{1 \leq h < U/q} \sum_{1 \leq r \leq q}\min\left(\frac{n}{qh+r},\frac{1}{\norm{\alpha(hq+r)}}\right),
\end{align*}
the last inequality by Lemma \ref{lemma48}.
If $\frac{q}{2} < r \leq q$ then 
$\frac{1}{r} < \frac{2}{q} = \frac{2}{(h+1)q}$ for $h=0$, and if
$1 \leq h < \frac{U}{q}$ and $1 \leq r \leq q$ then $h \geq \frac{h+1}{2}$ so
$hq+r > hq \geq \frac{(h+1)q}{2}$ and hence
$\frac{1}{hq+r} < \frac{2}{(h+1)q}$, whence
\[
\begin{split}
&\sum_{q/2<r \leq q}  \min\left(\frac{n}{r},\frac{1}{\norm{\alpha r}}\right)+
\sum_{1 \leq h < U/q} \sum_{1 \leq r \leq q}\min\left(\frac{n}{qh+r},\frac{1}{\norm{\alpha(hq+r)}}\right)\\
\leq&2\sum_{q/2<r \leq q}  \min\left( \frac{n}{(h+1)q},\frac{1}{\norm{\alpha r}}\right)+
2\sum_{1 \leq h < U/q}  \sum_{1 \leq r \leq q} \min\left( \frac{n}{(h+1)q},\frac{1}{\norm{\alpha(hq+r)}}\right).
\end{split}
\]
Consquently 
\[
\sum_{1 \leq k \leq U} \min\left( \frac{n}{k}, \frac{1}{\norm{\alpha k}} \right)
\leq C_1 q \log q + 2 \sum_{0 \leq h < U/q} \sum_{1 \leq r \leq q} \min\left( \frac{n}{(h+1)q},
\frac{1}{\norm{\alpha(hq+r)}}\right).
\]
Lemma \ref{lemma49} with $V=\frac{n}{(h+1)q}$ says
\[
\sum_{1 \leq r \leq q} \min\left( \frac{n}{(h+1)q},\frac{1}{\norm{\alpha(hq+r)}}\right) \leq C_2  \left( \frac{n}{(h+1)q}+q \log q\right),
\]
therefore
\begin{align*}
\sum_{1 \leq k \leq U} \min\left( \frac{n}{k}, \frac{1}{\norm{\alpha k}} \right)&\ll q \log q + \sum_{0 \leq h < U/q}  \left(\frac{n}{(h+1)q}+q \log q\right)\\
&\ll q \log q + \frac{n}{q} \sum_{1 \leq h <\frac{U}{q}+1} \frac{1}{h} + q (\log q) \left( \frac{U}{q}+1\right)\\
&\ll q \log q + \frac{n}{q} \log \left(\frac{U}{q}+1\right) + U \log q\\
&\ll U \log 2qU + q \log 2qU + \frac{n}{q} \log \left(\frac{U}{q}+1\right).
\end{align*}
If $U \leq q$ then $\frac{U}{q}+1 \leq 2 \leq 2qU$, and if $U > q$ then $\frac{U}{q} +1 \leq U+1 \leq 2U \leq 2qU$, hence
\[
\sum_{1 \leq k \leq U} \min\left( \frac{n}{k}, \frac{1}{\norm{\alpha k}} \right)
\ll U \log 2qU + q \log 2qU + \frac{n}{q} \log 2qU.
\]
\end{proof}

\begin{lemma}
There is some $C$ such that if $\alpha \in \mathbb{R}$, $(a,q) \in \mathscr{A}$, 
\[
\left| \alpha - \frac{p}{q} \right| \leq \frac{1}{q^2},
\]
and $U,V \geq 1$ are real numbers, then 
\[
\sum_{1 \leq k \leq U} \min\left(V,\frac{1}{\norm{\alpha k}} \right) \leq C\left(q+U+V+\frac{UV}{q}\right) \max\{1,\log q\}.
\]
\label{lemma411}
\end{lemma}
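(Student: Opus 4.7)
The plan is to reduce the problem to repeated application of Lemma \ref{lemma49}, which already handles a single block of $q$ consecutive integers. Specifically, I would write each integer $k$ with $1 \leq k \leq U$ uniquely as $k = hq + r$ with $h \geq 0$ and $1 \leq r \leq q$, so that the range $h$ traverses consists of at most $\lfloor U/q \rfloor + 1$ values. This decomposition slots the sum over $k$ into a double sum over $h$ and $r$, where the inner sum in $r$ is exactly the quantity estimated in Lemma \ref{lemma49}.

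Next, for each fixed $h$ appearing, I would apply Lemma \ref{lemma49} to obtain
\[
\sum_{1 \leq r \leq q} \min\!\left(V, \frac{1}{\norm{\alpha(hq+r)}}\right) \leq C_1 (V + q \log q),
\]
with a universal constant $C_1$. (A partial block at the end where only some $r$ satisfy $hq + r \leq U$ is dominated by the full block sum, since all terms are nonnegative.) Summing this estimate over the $h$ values yields
\[
\sum_{1 \leq k \leq U} \min\!\left(V, \frac{1}{\norm{\alpha k}}\right)
\leq \left(\frac{U}{q} + 1\right) C_1 (V + q \log q)
= C_1\!\left(\frac{UV}{q} + V + U \log q + q \log q\right).
\]

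Finally, I would note that each of the four terms on the right is bounded by a constant multiple of one of the four terms $q, U, V, UV/q$ multiplied by $\max\{1, \log q\}$: namely $UV/q \leq (UV/q)\max\{1,\log q\}$, $V \leq V\max\{1,\log q\}$, $U \log q \leq U\max\{1,\log q\}$, and $q \log q \leq q\max\{1,\log q\}$. Combining these, the desired inequality follows with $C = 4 C_1$. I do not anticipate a main obstacle: Lemma \ref{lemma49} already contains the delicate part of the argument (the residue-counting bound of eight integers per interval), and the present statement is essentially just bookkeeping that repeats Lemma \ref{lemma49} across blocks and absorbs the resulting terms into the stated four-term bound. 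The only minor care needed is the handling of the final partial block and the normalization $\max\{1, \log q\}$ that covers the edge case $q = 1$.
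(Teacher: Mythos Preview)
Your proposal is correct and follows essentially the same approach as the paper: decompose the range $1 \leq k \leq U$ into blocks of length $q$ via $k = hq + r$, apply Lemma~\ref{lemma49} to each block to get $C_1(V + q\log q)$, sum over the at most $U/q + 1$ values of $h$, and absorb the resulting four terms into $(q + U + V + UV/q)\max\{1,\log q\}$. Your write-up is in fact slightly cleaner than the paper's, which routes the $h=0$ block through Lemma~\ref{lemma48} (echoing the structure of Lemma~\ref{lemma410}) in a way that is not needed here since $V$ does not depend on $k$.
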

\begin{proof}
For $1 \leq k \leq U$ there is some $0 \leq h_k < \frac{U}{q}$ and $1 \leq r_k \leq q$ such that
$k=qh_k+r_k$, and then, as in the proof of Lemma \ref{lemma410},
\begin{align*}
\sum_{1 \leq k \leq U} \min\left(V,\frac{1}{\norm{\alpha k}}\right)&\leq 
\sum_{0 \leq h<U/q} \sum_{1 \leq r \leq q} \min\left(V,\frac{1}{\norm{\alpha(hq+r)}}\right)\\
&\leq C_1 q \log q +2 \sum_{0 \leq h < U/q} \sum_{1 \leq r \leq q} \min\left(V,\frac{1}{\norm{\alpha(hq+r)}}\right).
\end{align*}
Using Lemma \ref{lemma49}, 
\begin{align*}
\sum_{1 \leq k \leq U} \min\left(V,\frac{1}{\norm{\alpha k}}\right)&\leq C_1 q \log q + 2C_2 \sum_{0 \leq h < U/q} (V+q\log q)\\
&\ll q \log q + (V+q\log q)\left(\frac{U}{q}+1\right)\\
&\ll q \log q + \frac{UV}{q}+V+U\log q.
\end{align*}
\end{proof}

\textbf{Weyl's inequality} \cite[p.~114, Theorem 4.3]{nathanson} is the following.
For $k \geq 2$ and $\epsilon>0$, there is some $C(k,\epsilon)$ such that if
$\alpha \in \mathbb{R}$, $f(x)$ is a real polynomial with highest degree term $\alpha x^k$, 
$(a,q) \in \mathscr{A}$, and
\[
\left|\alpha-\frac{a}{q}\right| \leq \frac{1}{q^2},
\]
then, writing $S_N(f)=\sum_{j=1}^N e^{2\pi i f(j)}$ and $K=2^{k-1}$,
\[
|S_N(f)| \leq C(k,\epsilon) \cdot N^{1+\epsilon}(N^{-1}+q^{-1}+N^{-k}q)^{\frac{1}{K}}.
\]
Weyl's inequality is proved using Lemma \ref{lemma411}.

Montgomery \cite[Chapter 3]{montgomery} gives a similar but more streamlined presentation of Weyl's inequality.
Chandrasekharan \cite{MR0369277} gives a historical survey of exponential sums.

\textbf{Vinogradov's estimate}  \cite[p.~26, Theorem 3.1]{vaughan}  states that
there is some $C$ such that for $n \geq 2$, $1 \leq q \leq n$, $\gcd(a,q)=1$, and $\left|\alpha - \frac{a}{q}\right| \leq q^{-2}$, then
\begin{equation}
|f_n(\alpha)| \leq C (nq^{-1/2} + n^{4/5} + n^{1/2} q^{1/2}) (\log n)^4,
\label{vinogradov}
\end{equation}
where  $f_n(\alpha) = \sum_{p \leq n} (\log p)e^{2\pi i\alpha p}$;
cf. Nathanson \cite[p.~220, Theorem 8.5]{nathanson} and Vinogradov \cite[p.~131, Chapter IX, Theorem 1]{vinogradov}.
This is proved using Lemma \ref{lemma410}.

Fix $B>0$ and let $P_n = (\log n)^B$. For $1 \leq a \leq q \leq P_n$ and $\gcd(a,q)=1$, let
\[
\mathfrak{M}_n(q,a) = \left\{ \alpha \in \mathbb{R} : \left|\alpha - \frac{a}{q} \right| \leq P_n n^{-1} \right\},
\]
called a \textbf{major arc}. One checks that there is some $n_B$ such that if $n \geq n_B$, then
$\mathfrak{M}_n(q,a)$ and $\mathfrak{M}_n(q',a')$ are disjoint when $(q,a) \neq (q',a')$. 
Let
\[
\mathfrak{M}_n = \bigcup_{1 \leq a \leq q \leq P_n, \gcd(a,q)=1} \mathfrak{M}_n(q,a).
\]

The \textbf{Farey fractions of order $N$} are
\[
\mathfrak{F}_N = \left\{ \frac{h}{k} : 0 \leq h \leq k \leq N, \gcd(h,k)=1\right\}.
\]
Cf. the \textbf{Stern-Brocot tree} \cite[\S 4.5]{concrete}.
For early appearances of  Farey
fractions, see Dickson \cite[pp.~155--158, Chapter V]{dicksonI}.
It is proved by Cauchy that if $h/k$ and $h'/k'$ are successive elements of $\mathfrak{F}_N$, then
$kh'-hk'=1$ \cite[p.~23, Theorem 28]{wright}.
Let
\[
\phi(m) = |\{1 \leq k \leq m: \gcd(k,m)=1\}|,
\]
the \textbf{Euler phi function}, and write $\Phi(N) = \sum_{1 \leq m \leq N} \phi(m)$. 
One sees that 
$|\mathfrak{F}_N| = 1+\Phi(N)$, and
 it was proved by Mertens  \cite[p.~268]{wright} that
\[
\Phi(N) = \frac{3N^2}{\pi^2} + O(N\log N).
\]

Let $\lambda$ be Lebesgue measure on $\mathbb{R}$.
For $n \geq n_B$, because the major arcs are pairwise disjoint,
\begin{align*}
\lambda(\mathfrak{M}_n)&=\sum_{1 \leq a \leq q \leq P_n, \gcd(a,q)=1} 2P_n n^{-1}\\
&=\left( \sum_{m=1}^{P_n} \phi(m) \right) 2P_n n^{-1}\\
&=\frac{6}{\pi^2} P_n^3 n^{-1} + O(P_n^2 n^{-1} \log P_n).
\end{align*} 

Let $I_n = (P_nn^{-1},1+P_nn^{-1}]$, which
for $n>2P_n^2$
 contains  $\mathfrak{M}_n$. Let
\[
\mathfrak{m}_n=I_n \setminus \mathfrak{M}_n,
\]
called the \textbf{minor arcs}.

With $f_n(\alpha) = \sum_{p \leq n} (\log p) e^{2\pi i\alpha p}$ and 
\[
R(n) = \sum_{p_1+p_2+p_3=n} (\log p_1) (\log p_2) (\log p_3),
\]
we have
\[
R(n)=\int_{I_n} f_n(\alpha)^3 e^{-2\pi in\alpha} d\alpha
=\int_{\mathfrak{M}_n}  f_n(\alpha)^3 e^{-2\pi in\alpha} d\alpha
+\int_{\mathfrak{m}_n}  f_n(\alpha)^3 e^{-2\pi in\alpha} d\alpha.
\]
Using \eqref{vinogradov}, it can be proved that for $A>0$ with $B \geq 2A+10$ \cite[p.~29, Theorem 3.2]{vaughan},
\[
\int_{\mathfrak{m}_n} |f_n(\alpha)|^3 d\alpha = O(n^2 (\log n)^{-A}).
\]
Writing 
\[
\mathfrak{S}(n) = \left( \prod_{p \nmid n} (1+(p-1)^{-3}) \right) \prod_{p \mid n} (1-(p-1)^{-2}),
\]
called the \textbf{singular series}, it is proved, using the Siegel-Walfisz theorem on primes in arithmetic progressions \cite[p.~381, Corollary
11.19]{multiplicative}, that 
for $A>0$ with $B \geq 2A$ \cite[p.~31, Theorem 3.3]{vaughan},
\[
\int_{\mathfrak{M}_n} f_n(\alpha)^3 e^{-2\pi in\alpha} d\alpha = \frac{1}{2} n^2 \mathfrak{S}(n) + O(n^2(\log n)^{-A}).
\]
Thus
\[
R(n) = \frac{1}{2} n^2 \mathfrak{S}(n) + O(n^2(\log n)^{-A}),
\]
and it follows from this  that there is some $n_0$ such that if $n \geq n_0$ is odd then there are primes
$p_1,p_2,p_3$ such that $n=p_1+p_2+p_3$. 

For integers $a,b$ with $\gcd(a,b)=1$, the \textbf{Ford circle} $C(a,b)$ is the circle in $\mathbb{C}$ 
that touches the line $\Im z=0$ at $z=\frac{a}{b}$ and has radius $\frac{1}{2b^2}$; in other words, $C(a,b)$ is the circle in $\mathbb{C}$
with center $\frac{a}{b}+\frac{i}{2b^2}$ and radius
$\frac{1}{2b^2}$.
It is straightforward to prove that if $C(a,b)$ and $C(c,d)$ are Ford circles, then they are tangent if and only if $(bc-ad)^2=1$, and
otherwise they are disjoint \cite[p.~100, Theorem 5.6]{apostol}. It is also straightforward to prove \cite[p.~101, Theorem 5.7]{apostol} that if 
$\frac{h_1}{k_1} < \frac{h}{k} < \frac{h_2}{k_2}$ are successive elements of $\mathfrak{F}_N$, then 
$C(h_1,k_1)$ and $C(h,k)$ touch at
\[
\frac{h}{k} - \frac{k_1}{k(k^2+k_1^2)} + \frac{i}{k^2+k_1^2}
\]
and  $C(h,k)$ and $C(h_2,k_2)$ touch at
\[
\frac{h}{k} + \frac{k_2}{k(k^2+k_2^2)} + \frac{i}{k^2+k_2^2}.
\]
Bonahon \cite[pp.~207 ff., Chapter 8]{bonahon} explains Ford circles in the language of hyperbolic geometry.

We remind ourselves that $|\mathfrak{F}_N|=1+\Phi(N) = 1+\sum_{m=1}^N \phi(m)$ and let
 $\rho_{0,N} <  \cdots < \rho_{\Phi(N),N}$ be the elements of $\mathfrak{F}_N$. In particular,
$\rho_{0,N}=0$ and $\rho_{\Phi(N),N}=1$.  For $\rho_{n,N} = \frac{h_{n,N}}{k_{n,N}}$ with
$\gcd(h_{h,N},k_{n,N})=1$,
write
$C_{n,N} = C(h_{n,N},k_{n,N})$. 

Let $A_{0,N}$ be the clockwise arc of $C_{0,N}$ from $i$ to the point at which $C_{0,N}$ and $C_{1,N}$ touch. 
For $0<n<\Phi(N)$, let $A_{n,N}$ be the clockwise arc of $C_{n,N}$ from the point at which $C_{n-1,N}$ and $C_{n,N}$ touch
to the point at which $C_{n,N}$ and $C_{n+1,N}$ touch. Finally, let $A_{\Phi(N),N}$ be the clockwise arc of $C_{\Phi(N),N}$ from the point at which
$C_{\Phi(N)-1,N}$ and $C_{\Phi(N),N}$ touch to $i+1$.
Let $A_N$ be the composition of the arcs
\begin{equation}
A_{0,N},\ldots,A_{\Phi(N),N},
\label{arc_composition}
\end{equation}
which is a contour from $i$ to $i+1$. 

Write $H = \{\tau \in \mathbb{C} : \Im \tau > 0\}$. 
The \textbf{Dedekind eta function} $\eta:H \to \mathbb{C}$ is defined by
\[
\eta(\tau) = e^{\pi i \tau/12} \prod_{m=1}^\infty (1-e^{2\pi im\tau}).
\]
It is straightforward to check that $\eta$ is analytic and that
$\eta(\tau) \neq 0$ for all $\tau \in H$ \cite[pp.~17--18, \S 1.44]{titchmarsh}. For 
$(h,k) \in \mathscr{A}$, let
\[
s(h,k) = \sum_{r=1}^{k-1} \frac{r}{k} \left( \frac{hr}{k} - \left[ \frac{hr}{k} \right] - \frac{1}{2}\right)
=\sum_{r=1}^{k-1} \frac{r}{k} P_1(hr/k),
\]
called a \textbf{Dedekind sum}; $P_1$ is the periodic Bernoulli function.
Also, for $\begin{pmatrix}a&b\\c&d\end{pmatrix} \in SL_2(\mathbb{Z})$ write
\[
\epsilon(a,b,c,d)  = \exp\left( \pi i\left(\frac{a+d}{12c} + s(-d,c)\right) \right).
\]
The \textbf{functional equation for the Dedekind eta function} \cite[p.~52, Theorem 3.4]{apostol} is 
\[
\eta\left( \frac{a\tau +b}{c\tau+d} \right) = \epsilon(a,b,c,d) (-i(c\tau+d))^{1/2} \eta(\tau),
\quad \begin{pmatrix}a&b\\c&d\end{pmatrix} \in SL_2(\mathbb{Z}), \tau \in H.
\]

Let $p(n)$ be the number of ways of writing $n$ as a sum of positive integers where the order does not matter, called the
\textbf{partition function}. For example, $4,3+1,2+2,2+1+1,1+1+1+1$ are the partitions of $4$, so $p(4)=5$. 
Denoting by $D(0,1)$ the open disc with center $0$ and radius $1$, define $F:D(0,1) \to \mathbb{C}$ by
\[
F(z) = \prod_{m=1}^\infty (1-z^m)^{-1} =  \sum_{n=0}^\infty p(n) z^n;
\]
that the product and the  series are equal was found by Euler.
$F$ is analytic. On the one hand, $p(n) =\frac{F^{(n)}(0)}{n!} $, and on the other hand, by Cauchy's integral formula  \cite[p.~82, Theorem 2.41]{titchmarsh}, if $C$ is a circle with center $0$ and radius $0<R<1$ then 
\[
F^{(n)}(0) = \frac{n!}{2\pi i} \int_C \frac{F(z)}{z^{n+1}} dz.
\]
Taking $C$ to be the circle with center $0$ and
radius $e^{-2\pi}$ and doing the change of variable $z = e^{2\pi i\tau}$, 
\begin{align*}
p(n) & = \frac{1}{2\pi i} \int_i^{i+1} \frac{F(e^{2\pi i\tau})}{e^{2\pi i (n+1)\tau}} \cdot 2\pi i e^{2\pi i\tau}
d\tau\\
&=\int_i^{i+1} F(e^{2\pi i\tau}) e^{-2\pi in\tau} d\tau\\
&=\int_{A_N} F(e^{2\pi i\tau}) e^{-2\pi in\tau} d\tau,
\end{align*}
where we remind ourselves that $A_N$ is the contour \eqref{arc_composition} from $i$ to $i+1$.
Using this and the functional equation for the Dedekind eta function, 
Rademacher \cite[p.~104, Theorem 5.10]{apostol} proves that for $n \geq 1$,
\[
p(n) = \frac{1}{\pi \sqrt{2}} \sum_{k=1}^\infty A_k(n) k^{1/2} \frac{d}{dn}  \frac{ \sinh \left( \pi \sqrt{\frac{2}{3}} \cdot \frac{1}{k} \sqrt{n-\frac{1}{24}}\right)}{\sqrt{n-\frac{1}{24}}},
\]
for $A_k(n) = \sum_{0 \leq h < k, \gcd(h,k)=1} e^{\pi i s(h,k) - 2\pi inh/k}$.

We make a final remark about the Farey fractions. Writing the elements of $\mathfrak{F}_N$ as $\rho_{0,N}<\cdots<\rho_{\Phi(N),N}$,
 let $\eta_{n,N} = \rho_{n,N} - \frac{n}{\Phi(N)}$ for $1 \leq n \leq N$. 
For example,  $\Phi(5)=10$ and
\[
\left\{\rho_{1,5},\ldots,\rho_{10,5}\right\}=
 \left\{\frac{1}{5},\frac{1}{4},\frac{1}{3},\frac{2}{5},\frac{1}{2},\frac{3}{5},\frac{2}{3},\frac{3}{4},\frac{4}{5},1\right\},
\]
and 
\[
\{\eta_{1,5},\ldots,\eta_{10,5}\} = \left\{\frac{1}{10},\frac{1}{20},\frac{1}{30},0,0,0,-\frac{1}{30},
-\frac{1}{20},-\frac{1}{10},0\right\}.
\]
Landau \cite{farey}, following work of Franel, proves that the \textbf{Riemann hypothesis} is true if and only if
for every $\epsilon>0$,
\[
\sum_{n=1}^{\Phi(N)} |\eta_{n,N}| = O(N^{\frac{1}{2}+\epsilon}).
\]
For example, 
for $N=5$, the left-hand side is $\frac{11}{30}$. 
See Narkiewicz \cite[p.~40, \S 2.2.3]{narkiewicz}.

\section{Discrepancy and exponential sums}
\label{discrepancy}
Discrepancy and Diophantine approximation are covered by Kuipers and Niederreiter \cite[Chapters 1--2 ]{kuipers} and by 
Drmota and Tichy \cite[\S\S 1.1--1.4]{drmota}, especially \cite[pp.~48--66, \S 1.4.1]{drmota}.

Let $\omega=(x_n)$, $n \geq 1$, be a sequence of real numbers, and let $E \subset [0,1)$. For a positive integer $N$, let
$A(E;N;\omega)$ be the number of $x_n$, $1 \leq n \leq N$, such that $R(x_n) \in E$. 
We say that the sequence $\omega$ is \textbf{uniformly distributed modulo $1$} if we have for all $a$ and $b$ with $0 \leq a < b \leq 1$ that 
\[
\lim_{N \to \infty} \frac{A([a,b);N;\omega)}{N}=b-a.
\]
It can be shown \cite[p. 3, Corollary 1.1]{kuipers} that a sequence $x_n$ is uniformly distributed modulo $1$ if and only if for every 
Riemann integrable function $f:[0,1] \to \mathbb{R}$ we have 
\begin{equation}
\lim_{N \to \infty} \frac{1}{N} \sum_{n=1}^N f(R(x_n))= \int_0^1 f(t) dt.
\label{riemann}
\end{equation}
Thus, if a sequence is uniformly distributed then the integral of any Riemann integrable function on $[0,1]$ can be approximated by sampling according to this sequence.
This approximation can be quantified using the notion of discrepancy.

It can be proved \cite[p.~7, Theorem 2.1]{kuipers} that a sequence $x_n$ is uniformly distributed modulo $1$ if and only if 
for all nonzero integers $h$ we have
\[
\lim_{N \to \infty} \frac{1}{N} \sum_{n=1}^N e^{2\pi ihx_n}=0;
\]
this is called \textbf{Weyl's criterion}.
But if $x \in \Omega$, then
\begin{equation}
\left| \sum_{n=1}^N e^{2\pi i hnx} \right| = \left| \frac{1-e^{2\pi i hNx}}{1-e^{2\pi ihx}} \right|
\leq \frac{2}{|1-e^{2\pi ihx}|}=\frac{1}{|\sin \pi h x|}.
\label{irratunif}
\end{equation}
We thus obtain the following theorem.

\begin{theorem}
If $x \in \Omega$ then the sequence $nx$ is uniformly distributed modulo $1$.
\label{uniformtheorem}
\end{theorem}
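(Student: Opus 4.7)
The plan is to deduce this as an immediate application of Weyl's criterion together with the geometric series bound \eqref{irratunif}, both of which the excerpt has already set up. So the proof is essentially a one-liner once we combine the two ingredients.

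First I would invoke Weyl's criterion: to show that the sequence $nx$ is uniformly distributed modulo $1$, it suffices to verify that for every nonzero integer $h$,
\[
\lim_{N \to \infty} \frac{1}{N} \sum_{n=1}^N e^{2\pi i h n x} = 0.
\]
Fix such an $h$. Since $x \in \Omega$ is irrational, $hx \notin \mathbb{Z}$, and in particular $\sin \pi hx \neq 0$. Hence the quantity $\frac{1}{|\sin \pi h x|}$ is a finite constant depending only on $h$ and $x$, but not on $N$.

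Next I would apply the bound \eqref{irratunif}, which says
\[
\left| \sum_{n=1}^N e^{2\pi i h n x} \right| \leq \frac{1}{|\sin \pi h x|}.
\]
Dividing by $N$ gives
\[
\left| \frac{1}{N} \sum_{n=1}^N e^{2\pi i h n x} \right| \leq \frac{1}{N |\sin \pi h x|},
\]
and the right-hand side tends to $0$ as $N \to \infty$. This verifies Weyl's criterion for every nonzero integer $h$, completing the proof.

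There is really no obstacle here: irrationality of $x$ guarantees that the denominator in the geometric-series bound is nonzero and independent of $N$, so the partial exponential sums are uniformly bounded in $N$ and divided by $N$ they vanish in the limit. The only thing to be careful about is explicitly observing $hx \notin \mathbb{Z}$ for $h \neq 0$, which is precisely the definition of $x \in \Omega = [0,1] \setminus \mathbb{Q}$.
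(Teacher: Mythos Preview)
Your proof is correct and matches the paper's approach exactly: the paper derives the theorem immediately from Weyl's criterion and the bound \eqref{irratunif}, with the implicit observation that $\sin\pi hx\neq 0$ for irrational $x$ and nonzero $h$.
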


The \textbf{discrepancy} of a sequence $\omega$ is defined, for $N$ a positive integer, by
\[
D_N(\omega)=\sup_{0 \leq a<b \leq 1} \left| \frac{A([a,b);N;\omega)}{N}-(b-a)\right|
\]
One proves that the sequence $\omega$ is uniformly distributed modulo $1$ if and only if
$D_N(\omega) \to 0$ as $N \to \infty$ \cite[p.~89, Theorem 1.1]{kuipers}.

For $f:[0,1] \to \mathbb{R}$, let $V(f)$ denote the total variation of $f$.
Koksma's inequality \cite[p.~143, Theorem 5.1]{kuipers} states that for any sequence $\omega=(x_n)$,  
for any $f:[0,1] \to \mathbb{R}$ of bounded variation, and for any positive integer $N$, we have
\begin{equation}
\left| \frac{1}{N} \sum_{n=1}^N f(R(x_n))-\int_0^1 f(t) dt \right| \leq V(f) D_N(\omega).
\label{koksma}
\end{equation}

Following Kuipers and Niederreiter \cite[p.~122, Lemma 3.2]{kuipers}, we can bound the discrepancy of the sequence $nx$ in terms of the sum on the left-hand side
of  Theorem \ref{hhalpha}

\begin{lemma}
There is some $C>0$ such that for all  $x \in \Omega$,  $\omega=(nx)$, and for all positive integers $m$
we have
\[
D_N(\omega) < C\left(\frac{1}{m}+\frac{1}{N}\sum_{j=1}^m \frac{1}{j \norm{jx}}\right).
\]
\label{erdosturan}
\end{lemma}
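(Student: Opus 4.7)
The plan is to reduce to a general Erdős-Turán inequality: for any sequence $\omega = (y_n)$ of real numbers and any positive integer $m$,
\[
D_N(\omega) \leq C_0\left(\frac{1}{m} + \sum_{h=1}^m \frac{1}{h}\left|\frac{1}{N}\sum_{n=1}^N e^{2\pi i h y_n}\right|\right).
\]
Once this is in hand, specializing to $y_n = nx$ and applying the geometric-series bound \eqref{irratunif} together with $|\sin \pi h x| \geq 2\norm{hx}$ gives $\bigl|\sum_{n=1}^N e^{2\pi i h n x}\bigr| \leq \frac{1}{2\norm{hx}}$, and substituting produces the desired inequality with an absolute constant.

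To prove the general inequality I would construct Selberg's trigonometric polynomial majorant $S_m^+$ and minorant $S_m^-$ for the characteristic function $\chi_{[a,b)}$ on $\mathbb{T}$. These are polynomials of degree at most $m$ satisfying $S_m^- \leq \chi_{[a,b)} \leq S_m^+$ pointwise, with
\[
\int_0^1 \bigl(S_m^+(t) - S_m^-(t)\bigr)\,dt \leq \frac{C_1}{m},
\]
Fourier coefficients vanishing for $|h| > m$, and satisfying $|\hat{S}_m^{\pm}(h)| \leq \frac{C_2}{|h|}$ for $1 \leq |h| \leq m$.

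Given these polynomials, write $A([a,b); N; \omega) = \sum_{n=1}^N \chi_{[a,b)}(R(y_n))$ and sandwich this between $\sum_n S_m^{\pm}(R(y_n))$. Expanding each of the latter by the Fourier series of $S_m^{\pm}$ yields
\[
\sum_{n=1}^N S_m^{\pm}(R(y_n)) = N\hat{S}_m^{\pm}(0) + \sum_{0 < |h| \leq m} \hat{S}_m^{\pm}(h)\sum_{n=1}^N e^{2\pi i h y_n},
\]
and combining with $\hat{S}_m^{\pm}(0) = (b-a) + O(1/m)$ and $|\hat{S}_m^{\pm}(h)| \ll 1/h$ gives the Erdős-Turán inequality after taking the supremum over $0 \leq a < b \leq 1$.

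The main obstacle is constructing the Selberg majorant with both the sharp $1/m$ gap and the $1/|h|$ decay of its Fourier coefficients; this is a classical but delicate extremal problem for entire functions of exponential type, treated most cleanly through Beurling's and Vaaler's explicit formulas. A more elementary alternative giving the same qualitative bound is to convolve $\chi_{[a,b)}$ with the Fejér kernel of order $m$, after a small inward/outward shift of the endpoints to preserve the sandwich; one loses in the numerical constants but retains the $O(1/m)$ mean and $O(1/|h|)$ Fourier decay needed for the argument.
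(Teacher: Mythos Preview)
Your approach is correct and essentially identical to the paper's: both invoke the Erd\H{o}s--Tur\'an inequality \eqref{erdosturantheorem}, specialize to $y_n=nx$, bound the exponential sum via \eqref{irratunif}, and use $|\sin \pi h x|\geq 2\norm{hx}$. The only difference is that the paper simply cites Erd\H{o}s--Tur\'an as a known result from Kuipers and Niederreiter, whereas you additionally outline its proof via Selberg--Beurling--Vaaler majorants (or the Fej\'er-kernel variant); this extra material is correct but not needed for the lemma as stated.
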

\begin{proof}
We shall use the following inequality, which lets us bound the discrepancy of a sequence in terms of exponential sums formed from the elements of the sequence.
The Erd\H os-Tur\'an theorem \cite[p.~114, Eq. 2.42]{kuipers} states that there is some constant $C>0$ such that for any sequence
 $\omega=(x_n)$ of real numbers, any positive integer $N$, and any positive integer $m$ we have
\begin{equation}
 D_N(\omega) \leq C\left(\frac{1}{m}+\sum_{j=1}^m \frac{1}{j} \left| \frac{1}{N} \sum_{n=1}^N e^{2\pi i j x_n} \right| \right).
 \label{erdosturantheorem}
\end{equation}

Take $x_n=nx$. For each $j \geq 1$, by \eqref{irratunif} we have
\[
\left| \sum_{n=1}^N e^{2\pi i jnx} \right| 
\leq \frac{1}{| \sin \pi j x|}
=\frac{1}{\sin(\pi \norm{jx})}.
\]
But $\sin t \geq \frac{2}{\pi}t$ for $0 \leq t \leq \frac{\pi}{2}$, so 
\[
\frac{1}{\sin(\pi \norm{jx})} \leq \frac{1}{2\norm{jx}}<\frac{1}{\norm{jx}}.
\]
Using this in \eqref{erdosturantheorem} gives us
\[
D_N(\omega) < C\left(\frac{1}{m}+\sum_{j=1}^m \frac{1}{j} \cdot \frac{1}{N} \cdot \frac{1}{\norm{jx}}\right)
= C\left(\frac{1}{m}+\frac{1}{N} \sum_{j=1}^m \frac{1}{j\norm{jx}}\right),
\]
which is the claim.
\end{proof}

It follows from Theorem \ref{hhalpha} and Lemma \ref{erdosturan} (taking $m=N$) that if $x$ is of type $<K(\log h)^{1+\epsilon}$ then
\begin{equation}
D_N(\omega)=O\left( \frac{(\log N)^{2+\epsilon}}{N} \right).
\label{DNepsilon}
\end{equation}
Lemma \ref{aatype} tells us that for  almost all $x \in \Omega$ there is some $K>0$ such that $x$ is of type $<K(\log h)^{1+\epsilon}$, so for almost
all $x \in \Omega$, the bound \eqref{DNepsilon} is true. 
It likewise  follows that if $x$ has bounded partial quotients then 
\begin{equation}
D_N(\omega)=O\left( \frac{(\log N)^2}{N} \right).
\label{DNbounded}
\end{equation}
In fact, it can be proved that if $x \in \mathcal{B}_K$ then, for $g = \frac{1+\sqrt{5}}{2}$ \cite[p.~125, Theorem 3.4]{kuipers},
\[
D_N(\omega) \leq  3N^{-1} + \left( \frac{1}{\log g} + \frac{K}{\log(K+1)} \right) N^{-1} \log N.
\]

We use the above bounds in the proof of the following theorem.

\begin{theorem}
Let $\epsilon>0$. For almost all $x$ we have
\[
\sum_{n=1}^N \norm{nx}=\frac{N}{4}+O((\log N)^{2+\epsilon}),
\]
while if $x$ has bounded partial quotients then
\[
\sum_{n=1}^N \norm{nx}=\frac{N}{4}+O((\log N)^2).
\]
\label{normsum}
\end{theorem}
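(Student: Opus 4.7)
The plan is to apply Koksma's inequality \eqref{koksma} to the function $f(t) = \min(t, 1-t)$ on $[0,1]$, which satisfies $f(R(nx)) = \norm{nx}$, together with the discrepancy bounds \eqref{DNepsilon} and \eqref{DNbounded} already established for the sequence $\omega = (nx)$.

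First I would check the two ingredients needed for Koksma's inequality. The function $f$ is piecewise linear, increasing from $0$ to $\tfrac{1}{2}$ on $[0,\tfrac{1}{2}]$ and decreasing from $\tfrac{1}{2}$ to $0$ on $[\tfrac{1}{2},1]$, so $V(f) = 1$. Its integral is
\[
\int_0^1 f(t)\,dt = \int_0^{1/2} t\,dt + \int_{1/2}^1 (1-t)\,dt = \frac{1}{8}+\frac{1}{8} = \frac{1}{4}.
\]
Since $\norm{nx} = f(R(nx))$, Koksma's inequality \eqref{koksma} gives
\[
\left| \frac{1}{N} \sum_{n=1}^N \norm{nx} - \frac{1}{4} \right| \leq V(f)\,D_N(\omega) = D_N(\omega),
\]
and thus $\sum_{n=1}^N \norm{nx} = \frac{N}{4} + O(N\, D_N(\omega))$.

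Next I would invoke the discrepancy estimates derived right before the theorem. For almost every $x \in \Omega$, by Lemma \ref{aatype} the number $x$ is of type $<K(\log h)^{1+\epsilon}$ for some $K$, and hence the bound \eqref{DNepsilon} applies: $D_N(\omega) = O((\log N)^{2+\epsilon}/N)$. Multiplying by $N$ yields the first assertion. If instead $x$ has bounded partial quotients, Lemma \ref{constanttype} shows $x$ is of constant type and the stronger bound \eqref{DNbounded} gives $D_N(\omega) = O((\log N)^2/N)$, which multiplied by $N$ yields the second assertion.

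There is essentially no obstacle here: all the analytic work has been done in Theorem \ref{hhalpha} and Lemma \ref{erdosturan}, and the only task is to recognize that $\norm{\,\cdot\,}$ is a function of bounded variation with mean $\tfrac{1}{4}$ and then apply Koksma's inequality. The only tiny point worth flagging is that $\norm{nx} = f(R(nx))$ holds because for any real $y$ one has $\norm{y} = \min(R(y), 1-R(y)) = f(R(y))$, which is immediate from the definition of $\norm{\cdot}$ used in \S\ref{preliminaries}.
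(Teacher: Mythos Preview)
Your proposal is correct and follows essentially the same approach as the paper: apply Koksma's inequality \eqref{koksma} to $f(t)=\norm{t}$ (which is precisely your $\min(t,1-t)$), using $V(f)=1$ and $\int_0^1 f=\tfrac14$, and then invoke the discrepancy bounds \eqref{DNepsilon} and \eqref{DNbounded}. The paper's proof is slightly terser but identical in substance.
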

\begin{proof}
Let $f(t)=\norm{t}$. Then $V(f)=1$ and $\int_0^1 f(t) dt=\frac{1}{4}$, so we get from Koksma's inequality \eqref{koksma} that
\[
\left| \frac{1}{N} \sum_{n=1}^N \norm{nx} - \frac{1}{4} \right| \leq D_N(\omega),
\]
thus
\[
\sum_{n=1}^N \norm{nx}=\frac{N}{4}+O(ND_N(\omega)).
\]
The claims then follow respectively from \eqref{DNepsilon} and \eqref{DNbounded}.
\end{proof}

Like we mentioned at the beginning of \S \ref{reciprocalsection},
because $|\sin(\pi x)|=\sin(\pi \norm{x}) \leq \pi \norm{x}$ and $|\sin(\pi x)| = \sin(\pi \norm{x}) \geq \frac{2}{\pi} \cdot \pi \norm{x}=2\norm{x}$, we have
\[
2\sum_{n=1}^N \norm{nx} \leq   \sum_{n=1}^N |\sin(\pi nx)|  \leq \pi \sum_{n=1}^N \norm{nx}.
\]
Thus Theorem \ref{normsum} also gives estimates for  $\sum_{n=1}^N |\sin(\pi nx)|$.

We can investigate the sum $\sum_{n=1}^N R(nx)$ rather than $\sum_{n=1}^N \norm{nx}$; see Lang \cite[p.~37, Theorem 1]{MR0209227}, who proves that
for almost all $x \in \Omega$,
\[
\sum_{n=1}^N R(nx)=\frac{N}{2}+O((\log N)^{2 + \epsilon}).
\]

For $x \in \Omega$, let $q_n=q_n(x)$, the denominator of the $n$th convergent of the continued fraction expansion of $x$,
 and let $a_n=a_n(x)$, the $n$th partial quotient of the continued fraction expansion of $x$. For $m \geq 1$, one can prove
 \cite[p.~211, Proposition~1]{berthe} that $m$ can be written in one and only one way in the form
\begin{equation}
 m=\sum_{k=1}^\infty z_k q_{k-1}=\sum_{k=1}^t z_k q_{k-1},
 \label{ostexpansion}
\end{equation}
 where (i) $0 \leq z_1 \leq a_1-1$, (ii) $0 \leq z_k \leq a_k$ for $k \geq 2$, (iii) for $k \geq 1$, if $z_{k+1}=a_{k+1}$ then
 $z_k=0$, and (iv) $z_t \neq 0$ and $z_k=0$ for $k>t$. The expression \eqref{ostexpansion} is called the \textbf{Ostrowski expansion} of $m$. We emphasize that
 this expansion depends on $x$. Berth\'e \cite{berthe} surveys applications of this numeration system in combinatorics.
 For $n \geq 0$, define $d_{2n}=q_{2n}x-p_{2n}$ and $d_{2n+1}=p_{2n+1}-q_{2n+1}x$. 
 Brown and Shiue \cite[p.~184, Theorem~1]{MR1316813} prove that for $x \in \Omega$,
\begin{equation}
\sum_{k=1}^m \left(R(kx)-\frac{1}{2}\right)=\sum_{k=1}^t (-1)^k z_k\left( \frac{1}{2}-d_{k-1}\left(m_{k-1}+\frac{1}{2}z_k q_{k-1}+\frac{1}{2}\right) \right),
\label{Cxformula}
\end{equation}
where $m_0=0$ and if $k \geq 1$ then $m_k=\sum_{j=1}^k z_j q_{j-1}$.
If $k \geq 0$, then by \eqref{upperbound} we have $0<d_k<\frac{1}{q_{k+1}}$.
For $k \geq 1$, using the fact that $q_k \geq m_k+1$ (for the same reason that if the highest power of $2$ appearing in a number's binary expansion is $2^{k-1}$, then the number
is $\leq 2^k-1$),
\begin{eqnarray*}
m_{k-1}+\frac{1}{2}z_k q_{k-1}+\frac{1}{2}&=&m_k-\frac{1}{2}z_kq_{k-1} + \frac{1}{2}\\
&\leq&q_k-1-\frac{1}{2}z_kq_{k-1} + \frac{1}{2}\\
&=&q_k-\frac{1}{2}-\frac{1}{2}z_kq_{k-1}\\
&<&q_k.
\end{eqnarray*} 
Using \eqref{Cxformula}, this inequality, and the inequality $0<d_k<\frac{1}{q_{k+1}}$, we obtain
\begin{eqnarray*}
\left| \sum_{k=1}^m \left(R(kx)-\frac{1}{2}\right) \right| &\leq& \sum_{k=1}^t z_k \left|\frac{1}{2}-d_{k-1}\left(m_{k-1}+\frac{1}{2}z_k q_{k-1}+\frac{1}{2}\right) \right|\\
&<&\frac{1}{2}\sum_{k=1}^t z_k\\
&<&\frac{1}{2}\sum_{k=1}^t a_k.
\end{eqnarray*}
If the continued fraction expansion of $x$ has bounded partial quotients, say $a_k \leq K$ for all $k$, we obtain from the above that
\[
\left| \sum_{k=1}^m \left(R(kx)-\frac{1}{2}\right) \right| < \frac{Kt}{2}.
\]
It can be proved \cite[p.~185, Fact~2]{MR1316813} that $t<3\log m$. Thus, if $a_k \leq K$ for all $k$, then for $m \geq 1$,
\[
\left| \sum_{k=1}^m \left(R(kx)-\frac{1}{2}\right) \right| < \frac{3K\log m}{2}.
\]
This is Lerch's claim stated in \S \ref{background}.
For example, if  $x=\frac{-1+\sqrt{5}}{2} \in \Omega$ then $a_k(x)=1$ for all $k \geq 1$. We compute that 
\[
\sum_{k=1}^{1000000} \left(R(kx)-\frac{1}{2}\right)=0.941799\ldots;
\]
on the other hand, we compute that
\[
\sum_{k=1}^{1000000} \left( R(k\pi)-\frac{1}{2} \right)=19.223414\ldots.
\]

Brown and Shiue \cite[p.~185, Fact~1]{MR1316813}  use \eqref{Cxformula} to obtain the result of Sierpinski stated in \S \ref{background} that
for all $x \in \Omega$,
\[
\sum_{k=1}^m R(kx)=\frac{m}{2}+o(m).
\]
They also prove \cite[p.~188, Theorem~4]{MR1316813} that for $A>0$, there exists some $d_A>0$ such that for $x \in \Omega$, if there
are infinitely many  $t$ such that $\sum_{k=1}^t a_k \leq At$ (which happens in particular if $x$ has bounded partial quotients), then there are infinitely many $m$ such that
\[
\sum_{k=1}^m \left(R(kx)-\frac{1}{2}\right)>d_A \log m,
\]
and there are infinitely many $m$ such that
\[
\sum_{k=1}^m \left(R(kx)-\frac{1}{2}\right)< -d_A \log m.
\]

It can be shown \cite[p.~44, Theorem 4]{MR0209227} that if $k$ is a positive integer 
and $\epsilon>0$, then
for almost all $x$ we have 
\[
\left| \sum_{n=1}^N e^{2\pi i n^k x} \right| = O\left( N^{\frac{1}{2}+\epsilon}\right).
\]
Lang attributes this result to Vinogradov. But it is not so easy to obtain a bound on this exponential sum for specific $x$. For $k=2$, one can prove
\cite[p.~45, Lemma]{MR0209227}  that for any $x \in \Omega$,
\[
\left| \sum_{n=1}^N e^{2\pi i n^2 x} \right|^2 \leq N + 4\sum_{n=1}^N \frac{1}{|\sin 4\pi n x|}<
N+4\sum_{n=1}^{4N}  \frac{1}{|\sin \pi n x|};
\]
cf. Steele \cite[Problem 14.2]{steele}.
By \eqref{sinebound} this gives us
\[
\left| \sum_{n=1}^N e^{2\pi i n^2 x} \right|^2 < N+2\sum_{n=1}^{4N} \frac{1}{\norm{jx}}.
\]
If $x$ has bounded partial quotients, it follows from Theorem \ref{reciprocaltheorem} that
\[
\left| \sum_{n=1}^N e^{2\pi i n^2 x} \right| = O\left(N^{1/2}(\log N)^{1/2}\right).
\]

Hardy and Littlewood \cite[p.~28, Theorem B5]{latticeI} prove  that if $x \in \Omega$ is an algebraic number, then
there is some $0<\alpha(x)<1$ such that $\sum_{n=1}^N R(nx) = \frac{N}{2} + O(N^{\alpha(x)})$.
Pillai \cite{pillai1}  gives a different proof of this. 

\begin{theorem}[Hardy and Littlewood, Pillai]
For $\tau>2$, if $x \in \mathcal{D}(\tau)$ then for $\alpha = \frac{\tau-2}{\tau-1}$,
\[
\sum_{n=1}^N R(nx) = \frac{N}{2} + O(N^\alpha).
\]
\end{theorem}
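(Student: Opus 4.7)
The plan is to combine Koksma's inequality with the Erd\H{o}s-Tur\'an discrepancy estimate in Lemma~\ref{erdosturan} and the explicit type bound supplied by Theorem~\ref{hhalpha}. First, apply Koksma's inequality \eqref{koksma} with $f(t) = t - \tfrac{1}{2}$ on $[0,1]$, which has $V(f) = 1$ and $\int_0^1 f\,dt = 0$, to the sequence $\omega = (nx)_{n \geq 1}$. Since $R(nx) \in [0,1)$ we have $f(R(nx)) = R(nx) - \tfrac{1}{2}$, so
\[
\left|\sum_{n=1}^N R(nx) - \frac{N}{2}\right| \leq N \cdot D_N(\omega),
\]
and it suffices to prove $D_N(\omega) = O(N^{-1/(\tau-1)})$.

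Next, translate the Diophantine hypothesis into the language of type. Since $x \in \mathcal{D}(\tau)$, there is $\gamma > 0$ with $\norm{hx} \geq \gamma h^{-(\tau-1)} = \frac{1}{h \cdot \gamma^{-1} h^{\tau-2}}$ for every $h \geq 1$, so $x$ is of type $<\psi$ with $\psi(h) = \gamma^{-1} h^{\tau-2}$ (nondecreasing because $\tau > 2$). Theorem~\ref{hhalpha} then gives
\[
\sum_{j=1}^m \frac{1}{j\norm{jx}} \ll_{\tau,\gamma} (\log m)^2 + m^{\tau-2} + \sum_{j=1}^m j^{\tau-3} \ll_{\tau,\gamma} m^{\tau-2},
\]
the last step because a straightforward integral comparison shows $\sum_{j=1}^m j^{\tau-3} \ll_\tau m^{\tau-2}$ (with a harmless logarithm at $\tau = 3$ that is absorbed by $\psi(m) = \gamma^{-1}m$) and because $(\log m)^2 \ll_\tau m^{\tau-2}$ for $\tau > 2$ and $m$ large.

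Now feed this into Lemma~\ref{erdosturan}:
\[
D_N(\omega) \leq C \left( \frac{1}{m} + \frac{m^{\tau-2}}{N} \right).
\]
Choose $m = \lfloor N^{1/(\tau-1)} \rfloor$, which makes both terms $\asymp N^{-1/(\tau-1)}$, so $D_N(\omega) = O(N^{-1/(\tau-1)})$. Combined with Koksma this yields
\[
\left|\sum_{n=1}^N R(nx) - \frac{N}{2}\right| \leq N \cdot D_N(\omega) = O\bigl(N^{1 - 1/(\tau-1)}\bigr) = O(N^\alpha).
\]
The only mildly delicate point is the bookkeeping of the logarithmic contributions in Theorem~\ref{hhalpha}, but the strict inequality $\tau > 2$ makes $\alpha = (\tau-2)/(\tau-1)$ strictly positive, so any polylogarithmic factor is swallowed by $N^\alpha$ for large $N$.
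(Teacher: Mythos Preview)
Your proof is correct. The paper itself does not supply a proof of this theorem: it merely states the result and cites Hardy--Littlewood and Pillai. Your argument is a clean synthesis of machinery already developed in the paper---Koksma's inequality \eqref{koksma}, the Erd\H{o}s--Tur\'an bound packaged as Lemma~\ref{erdosturan}, and the type-based estimate Theorem~\ref{hhalpha}---and is precisely the kind of proof the paper's exposition sets up the reader to give.

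One cosmetic remark: your parenthetical about a ``harmless logarithm at $\tau=3$'' is misplaced. The integral comparison $\sum_{j=1}^m j^{\tau-3} \ll_\tau m^{\tau-2}$ is valid uniformly for all $\tau>2$ (the borderline case producing a logarithm would be $\tau=2$, which is excluded), so there is nothing special about $\tau=3$. This does not affect the argument.
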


Pillai \cite{pillai2} proves other identities and inequalities for  $\sum_{n=1}^N R(nx)$, some for all $x \in \Omega$ and some for all algebraic $x \in \Omega$.

For $\omega=(x_n)$, $n \geq 1$ and for $E \subset [0,1)$, we remind ourselves that
$A(E;M;\omega)$ denotes the number of $x_n$, $1 \leq n \leq M$, such that $R(x_n) \in E$.
Define for $M \geq 1$,
\[
D_M^*(\omega) = \sup_{0<\beta \leq 1} \left| \frac{A([0,\beta);M;\omega)}{M}-\beta \right|.
\]
It is straightforward to prove that $D_M^* \leq D_M \leq 2D_M^*$ \cite[p.~91, Theorem 1.3]{kuipers}.
For $N \geq 1$ write $r=\log N$. Let $\epsilon_0>0$ and let
$N^{1/2} \leq \tau \leq N \exp(-r^{\epsilon_0})$. Suppose that $\alpha \in \mathbb{R}$ and 
\[
\alpha = \frac{a}{q} + \frac{\theta}{q\tau},\quad \gcd(a,q)=1,\quad \exp(r^{\epsilon_0}) \leq q \leq \tau,\quad |\theta| \leq 1.
\]
For $0<\beta<1$ denote by $H_\beta(N)$  the number of primes $p \leq N$ such that $R(\alpha p) \leq \beta$. 
Vinogradov \cite[p.~177, Chapter XI, Theorem]{vinogradov} proves that for $\epsilon>0$,
\[
H_\beta(N) = \beta \pi (N) + O(N(q^{-1}+qN^{-1})^{\frac{1}{2}-\epsilon} + N^{\frac{4}{5}+\epsilon}),
\qquad N \to \infty.
\]
Let $\omega=(p_n \alpha)$, $n \geq 1$, where $p_n$ is the $n$th prime. Using Vinogradov's estimate,
one proves that for $\alpha \in \mathbb{R} \setminus \mathbb{Q}$,
$D_N^*(\omega) \to 0$ as $N \to \infty$, which implies that the sequence $(p_n \alpha)$ is uniformly distributed modulo $1$.
A clean proof of this is given by Pollicott \cite[p.~200, Theorem 1]{pollicott},
and this is also
proved by Vaaler \cite{vaaler} using a Tauberian theorem. See also the early survey by 
Hua \cite[pp.~98--99, \S 38]{hua}.

Defining $S_\alpha(n)=\sum_{k=1}^n \left(R(k\alpha)-\frac{1}{2}\right)$, 
 Beck \cite[p.~14, Theorem 3.1]{beck} proves that there is some $c>0$ such that for every
 $\lambda \in \mathbb{R}$, 
 \[
 \frac{1}{N} \left| \left\{ 1 \leq n \leq N: \frac{S_{\sqrt{2}}(n)}{c\sqrt{\log n}} \leq \lambda 
 \right\} \right| \to \frac{1}{\sqrt{2\pi}} \int_{-\infty}^\lambda \exp\left(-\frac{u^2}{2}\right) du
 \]
 as $N \to \infty$. 
Beck \cite[p.~20, Theorem 1.2]{beck2014} further proves that if $\alpha$ is a quadratic irrational, there are
 $C_1=C_1(\alpha) \in \mathbb{R}$ and $C_2=C_2(\alpha) \in \mathbb{R}_{>0}$ such that for $A,B \in \mathbb{R}$, $A<B$,
\[
\begin{split}
&\frac{1}{N} \left| \left\{ 0 \leq n < N : A \leq \frac{S_\alpha(n)-C_3 \log N}{C_4 \sqrt{\log N}} \leq B\right\}\right|\\
=&(2\pi)^{-1/2} \int_A^B e^{-u^2/2} du + O((\log N)^{-1/10} \log \log N).
\end{split}
\]

Beck and Chen \cite{irregularities}

\section{Dirichlet series}
\label{dirichlet}
The result of de la Vall\'ee-Poussin \cite{valleepoussin} stated in \S \ref{background} implies that there is no $s$ such that for all irrational $x$ the Dirichlet series
$\sum_{n=1}^\infty \frac{n^{-s}}{|\sin n\pi x|}$ converges. It follows from this fact that there is no $s$ such that for all irrational $x$ the Dirichlet series
$\sum_{n=1}^\infty \frac{n^{-s}}{\norm{nx}}$ converges.

Lerch \cite{comptes} in 1904 gives some statements without proof about the series
\[
\sum_{\nu=1}^\infty \frac{\cot \nu \omega \pi}{(2\nu \pi)^{2m+1}}.
\]
He states that if $\omega$ is a real algebraic number that does not belong to $\mathbb{Q}$, then for sufficiently
large $m$ this series converges, and states, for example, that
with $\omega=\frac{1+\sqrt{5}}{2}$,
\[
\sqrt{5} \sum_{\nu=1}^\infty \frac{\cot \nu \omega \pi}{(2\nu \pi)^7} = -\frac{8}{10!}.
\]
Writing $c_r(\theta)=\sum_{n=1}^\infty \frac{\cot(\pi n \theta)}{n^{2r-1}}$, Berndt \cite[p.~135, Theorem 5.1]{berndt}
proves that if $\theta$ is a real algebraic number of degree $d$ and $1<d<2r-1$ (to say that $d>1$ is to say that $\theta$ is irrational), then
$c_r(\theta)$ converges.

For a Dirichlet series $\sum_{n=1}^\infty a_n n^{-s}$, one can show \cite[pp.~289--290, \S 9.11]{titchmarsh} that if the series is convergent at $s=\sigma_0+it_0$, then for $\sigma>\sigma_0$ and any  $t$ the series is convergent at $s=\sigma+it$. It follows that there is some $\sigma_0 \in [-\infty,\infty]$ such if $\sigma<\sigma_0$ then the series diverges at $s=\sigma+it$, and if $\sigma>\sigma_0$ then the series converges at $s=\sigma+it$. We call $\sigma_0$ the \textbf{abscissa of convergence} of the Dirichlet series. If each $a_n$ is a nonnegative real number, then 
the function
\[
F(s)=\sum_{n=1}^\infty a_n n^{-s}, \qquad \Re s>\sigma_0,
\]
cannot be analytically continued to any domain that includes $s=\sigma_0$ \cite[p.~101, Proposition~18]{schoiss}.

Let $a_n$ be a sequence of complex numbers.
It can be  shown \cite[pp.~292--293, \S 9.14]{titchmarsh} that if $s_n=a_1+\ldots+a_n$ and the sequence $s_n$ diverges, then the abscissa of convergence of the Dirichlet series $\sum_{n=1}^\infty a_n n^{-s}$ is given by
\[
\sigma_0=\limsup_{n \to \infty} \frac{\log |s_n|}{\log n}.
\]
By Theorem \ref{reciprocaltheorem} and Theorem \ref{omegabound} (taking, say, $\epsilon=1$), for almost all $x \in \Omega$ there are $C_1,C_2$ such that for all positive integers $n$ we have
\[
C_1 n \log n< \sum_{j=1}^n \frac{1}{\norm{jx}} < C_2 n(\log n)^2.
\]
Thus, if $a_n=\frac{1}{\norm{nx}}$, then
\[
\frac{\log C_1}{\log n} + 1 + \frac{\log \log n}{\log n} < \frac{\log s_n}{\log n} < \frac{\log C_2}{\log n} + 1 + \frac{2 \log \log n}{\log n},
\]
and hence
\[
\lim_{n \to \infty} \frac{\log s_n}{\log n}=1.
\]
It follows that for almost all $x \in \Omega$ the abscissa of convergence of the Dirichlet series $\sum_{n=1}^\infty \frac{1}{\norm{nx}} n^{-s}$ is $\sigma_0=1$.

Likewise, by Theorem \ref{normsum} (taking $\epsilon=1$), we get for almost all $x \in \Omega$ that $\sum_{j=1}^n \norm{jx}=\frac{n}{4}+O\left((\log n)^3\right)$.
We can then check that $\lim_{n \to \infty} \frac{\log s_n}{\log n}=1$, and hence that the abscissa of convergence of the Dirichlet series
$\sum_{n=1}^\infty \norm{nx} n^{-s}$ is $\sigma_0=1$.

A 1953 result of Mahler \cite[pp. 107--108]{feldman} implies that if $\alpha \in \mathbb{R}$ is
an algebraic number of degree $d$, then, for $m=[20\cdot 2^{\frac{5(d-1)}{2}}]$, the Dirichlet series
\[
\sum_{n=1}^\infty \frac{n^{-s}}{\sin n\alpha}
\]
has abscissa of convergence $\sigma_0 \leq d(m+1)\log(m+1)$, and the power series
\[
\sum_{n=1}^\infty \frac{z^n}{\sin n\alpha}
\]
has radius of convergence 1.

Rivoal \cite{MR2928508} presents later work on similar Dirichlet series. See also Queff\'elec and Queff\'elec \cite{queffelec}.
Lal\'in, Rodrigue and Rogers \cite{secant} prove results about Dirichlet series of the form
$\sum_{n=1}^\infty \frac{n^{-s}}{\cos(n \pi z)}$.
Duke and Imamo{\=g}lu \cite{MR2127435} review Hardy and Littlewood's work on estimating lattice points in triangles, and 
prove results about lattice points in cones.

For $\theta \in \Omega$,  write
\[
R_{r,\theta}(\zeta,m)=\sum_{n=0}^m \frac{1}{r!} P_r(\zeta+n\theta),
\]
where $P_r$ is a periodic Bernoulli function.
Spencer \cite{spencer} proves that for any $\epsilon>0$ and almost all $\theta \in \Omega$, 
\[
R_{1,\theta}(\zeta,m) = O\left( (\log m)^{1+\epsilon}\right).
\]
Another result Spencer proves in this paper is that if $q_n(\theta) = O(q_{n-1}^h)$, then $R_{r,\theta}(\zeta,m)=O(m^{1-\frac{r}{h}})$ for $1 \leq r < h$.
Schoi{\ss}engeier \cite{MR1012966} gives an explicit formula for $\sum_{k=0}^{N-1} P_2(k\alpha)$.

\section{Power series}
\label{powersection}
For a power series $\sum a_n z^n$ with radius of convergence $0 \leq R \leq \infty$, the \textbf{Cauchy-Hadamard formula} \cite[p.~111,
Chapter 4, \S 1]{remmert}
states
\begin{equation}
R =\frac{1}{\limsup_{n \to \infty} |a_n|^{1/n}} =  \liminf_{n \to \infty} |a_n|^{-1/n}.
\label{cauchyhadamard}
\end{equation}
The radius of convergence $R$ is equal to the supremum of those $t \geq 0$ for which 
 $|a_n| t^n$ is a bounded sequence.

\begin{lemma}
If $x \in \Omega$, then
the power series
\[
\sum_{n=1}^\infty \frac{z^n}{\norm{nx}}
\]
and 
\[
\sum_{n=1}^\infty \frac{z^n}{|\sin \pi n x|},
\]
have the same radius of convergence.
\label{sameradius}
\end{lemma}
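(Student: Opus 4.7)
The plan is to invoke the Cauchy-Hadamard formula \eqref{cauchyhadamard} and show that the $n$th roots of the two coefficient sequences have the same $\limsup$. Since $x \in \Omega$ means $x$ is irrational, we have $\norm{nx} > 0$ and $|\sin \pi n x| > 0$ for every $n \geq 1$, so the two sequences of coefficients are well-defined positive reals.

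First I would record the pointwise inequalities already used in \S\ref{reciprocalsection}: from $2\norm{y} \leq |\sin \pi y| \leq \pi \norm{y}$ for all real $y$, one has, for each $n \geq 1$,
\[
\frac{1}{\pi \norm{nx}} \leq \frac{1}{|\sin \pi n x|} \leq \frac{1}{2 \norm{nx}}.
\]
Setting $a_n = 1/\norm{nx}$ and $b_n = 1/|\sin \pi n x|$, this reads $\pi^{-1} a_n \leq b_n \leq 2^{-1} a_n$.

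Next I would take $n$th roots and pass to the limit. The above gives
\[
\pi^{-1/n} a_n^{1/n} \leq b_n^{1/n} \leq 2^{-1/n} a_n^{1/n},
\]
and since $\pi^{-1/n} \to 1$ and $2^{-1/n} \to 1$ as $n \to \infty$, we conclude $\limsup_{n\to\infty} b_n^{1/n} = \limsup_{n\to\infty} a_n^{1/n}$. By the Cauchy-Hadamard formula \eqref{cauchyhadamard}, the two series have the same radius of convergence.

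There is no real obstacle here; the only thing to be careful about is that the constants $\pi^{-1/n}$ and $2^{-1/n}$ tend to $1$, which is why multiplying coefficients by a bounded sequence bounded away from $0$ does not affect the $\limsup$ of the $n$th roots. Equivalently, one could argue directly that for any $t > 0$, the sequence $a_n t^n$ is bounded if and only if $b_n t^n$ is bounded, and then use the characterization of the radius of convergence as the supremum of such $t$.
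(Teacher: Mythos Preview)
Your proof is correct and follows essentially the same approach as the paper: both use the sandwich $2\norm{nx}\le |\sin \pi nx|\le \pi\norm{nx}$, take $n$th roots, and use $\pi^{1/n}\to 1$, $2^{1/n}\to 1$ together with the Cauchy--Hadamard formula. The only cosmetic difference is that the paper works with $\liminf_{n\to\infty}\norm{nx}^{1/n}$ and $\liminf_{n\to\infty}|\sin \pi nx|^{1/n}$ directly (i.e.\ the $R=\liminf |a_n|^{-1/n}$ form), whereas you phrase it via the $\limsup$ of the coefficients; these are the same argument.
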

\begin{proof}
The radii of convergence of these power series are respectively
\[
\liminf_{n \to \infty} \norm{nx}^{1/n} \quad \textrm{and} \quad  \liminf_{n \to \infty} |\sin(\pi n x)|^{1/n}.
\]
On the one hand, 
\[
|\sin(\pi nx)|=\sin(\pi \norm{nx}) \leq \pi \norm{nx}
\]
Therefore, since $\lim_{n \to \infty} \pi^{1/n}=1$,
\[
\liminf_{n \to \infty} |\sin(\pi nx)|^{1/n} \leq \liminf_{n\to \infty} \left(\pi\norm{nx}\right)^{1/n}
=\liminf_{n \to \infty} \norm{nx}^{1/n}
\]

On the other hand, since $\sin t \geq \frac{2}{\pi}t$ for $t \geq 0$,
\[
|\sin(\pi n x)|=\sin(\pi\norm{nx}) \geq \frac{2}{\pi} \pi\norm{nx}=2\norm{nx}.
\]
Therefore, using $\lim_{n \to \infty} 2^{1/n}=1$, we have
\[
\liminf_{n \to \infty} |\sin(\pi n x)|^{1/n} \geq \liminf_{n \to \infty} \left(2\norm{nx}\right)^{1/n}
=\liminf_{n \to \infty} \norm{nx}^{1/n},
\]
showing that the two power series have the same radius of convergence.
\end{proof}

We  show in the following theorem that for almost all $x$, the power series $\sum_{n=1}^\infty \frac{z^n}{\norm{nx}}$ has radius of convergence $1$.

\begin{theorem}
For almost all $x \in \Omega$, the power series 
\begin{equation}
\sum_{n=1}^\infty \frac{z^n}{\norm{nx}}
\label{roc}
\end{equation}
has radius of convergence $1$.
\label{roclemma}
\end{theorem}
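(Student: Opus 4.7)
By the Cauchy-Hadamard formula \eqref{cauchyhadamard}, the radius of convergence of \eqref{roc} equals
\[
R(x) = \liminf_{n \to \infty} \norm{nx}^{1/n},
\]
so the plan is to show that for almost all $x \in \Omega$ this $\liminf$ equals $1$. The upper bound $R(x) \leq 1$ is automatic and holds for every $x \in \Omega$: since $\norm{nx} \leq \tfrac{1}{2}$, we have $\norm{nx}^{1/n} \leq (1/2)^{1/n} \to 1$, so $\liminf_{n \to \infty} \norm{nx}^{1/n} \leq 1$. The work is therefore in the lower bound.

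For the lower bound the plan is to use Khinchin's lemma (Lemma \ref{khinchin}). Fix $0 < r < 1$ and set $f(q) = r^q$; then $\sum_{q=1}^\infty r^q < \infty$, so Lemma \ref{khinchin} produces a full-measure set $\Omega_r \subset \Omega$ such that for every $x \in \Omega_r$ there are only finitely many positive integers $n$ with $\norm{nx} < r^n$. For such an $x$ there exists $N = N(x,r)$ such that $\norm{nx} \geq r^n$ for all $n \geq N$, hence $\norm{nx}^{1/n} \geq r$ for $n \geq N$, and therefore $\liminf_{n \to \infty} \norm{nx}^{1/n} \geq r$.

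To promote this to a statement independent of $r$, choose a sequence $r_k \uparrow 1$ (for instance $r_k = 1 - 1/k$) and let $\Omega_\infty = \bigcap_{k \geq 1} \Omega_{r_k}$. Then $\Omega_\infty$ has full measure as the countable intersection of full-measure sets, and for every $x \in \Omega_\infty$ we have $\liminf_{n \to \infty} \norm{nx}^{1/n} \geq r_k$ for every $k$, hence $\liminf_{n \to \infty} \norm{nx}^{1/n} \geq 1$. Combined with the trivial upper bound, $R(x) = 1$ for all $x \in \Omega_\infty$, which completes the proof.

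The proof is quite short; there is no real obstacle beyond recognizing that Khinchin's lemma applies to the exponentially decaying sequence $f(q) = r^q$ and that one must take a countable intersection so that the null set does not depend on $r$. (By Lemma \ref{sameradius}, the same conclusion holds for $\sum z^n/|\sin \pi nx|$.)
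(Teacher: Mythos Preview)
Your proof is correct and follows essentially the same route as the paper: both apply Lemma \ref{khinchin} with $f(q)=r^q$ for $r=1-1/m$ and then take a countable intersection (equivalently, the paper's countable union of null sets $A_m$) to push the lower bound up to $1$. The only cosmetic difference is that you obtain $R(x)\leq 1$ directly from Cauchy--Hadamard and $\norm{nx}\leq 1/2$, whereas the paper observes that the series diverges at $z=1$.
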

\begin{proof}
For $x \in \Omega$, let $R_x$ be the radius of convergence of the power series \eqref{roc}.
We have $0 < \norm{nx} < \frac{1}{2}$, so $\frac{1}{\norm{nx}} > 2$. Therefore $\sum_{n=1}^N \frac{1}{\norm{nx}} \to \infty$ as $N \to \infty$, and so
the power series \eqref{roc} diverges at $z=1$. Therefore $R_x \leq 1$ for all $x \in \Omega$.

We shall use Lemma \ref{khinchin} to get a lower bound on $R_x$ that holds for almost all $x \in \Omega$.
Let $A=\{x \in \Omega: R_x<1\}$,
let $A_m=\{x \in \Omega:R_x<1-\frac{1}{m}\}$, and let
$B_m$ be those $x \in \Omega$ such that $\norm{nx}^{1/n}<1-\frac{1}{m}$ infinitely often.
If $x \in A_m$, then $R_x=\liminf_{n \to \infty} \norm{nx}^{1/n}<1-\frac{1}{m}$, and this implies that there are infinitely many $n$ such that
$\norm{nx}^{1/n}<1-\frac{1}{m}$, so $x \in B_m$, i.e. 
$A_m \subset B_m$.
But let $f_m(n)=\left(1-\frac{1}{m}\right)^n$. Then $\sum_{n=1}^\infty f_m(n)$ converges, since $1-\frac{1}{m}<1$, so, by Lemma \ref{khinchin}, for almost all $x \in \Omega$ there are only
finitely many $n$ such that $\norm{nx} < f_m(n)$. Thus $\mu(B_m)=0$. Hence $\mu(A_m)=0$, and since $A=\bigcup_{m=2}^\infty A_m$ we
get that
\[
\mu(A) \leq \sum_{m=2}^\infty \mu(A_m)=0,
\]
that is, $R_x \geq 1$ for almost all $x \in \Omega$. In conclusion, $R_x=1$ for almost all $x \in \Omega$.
\end{proof}

In fact, we can prove the above theorem using the bounds we obtained in Theorem \ref{reciprocaltheorem}.
By Theorem \ref{reciprocaltheorem}, for almost all $x \in \Omega$ we have that $\sum_{j=1}^m \frac{1}{\norm{j x}}=O(m^2)$. (Here we will merely need the fact that the sum is 
subexponential in $m$.) For such an $x$, take $0<r<1$.
Let $a_n=r^n$, let $s_n=\sum_{j=1}^n \frac{1}{\norm{j x}}$, let $b_1=0$, and let $b_n=s_{n-1}$ for $n  \geq 2$. Using summation by parts,
namely
\[
\sum_{n=1}^N a_n(b_{n+1}-b_n)=a_{N+1}b_{N+1}-a_1 b_1 - \sum_{n=1}^N b_{n+1}(a_{n+1}-a_n),
\]
we get
\[
\sum_{n=1}^N \frac{r^n}{\norm{nx}} = r^{N+1} s_N + \sum_{n=1}^N s_n  r^n (1-r).
\]
Therefore
\[
\sum_{n=1}^N \frac{r^n}{\norm{nx}}  = O\left(r^{N+1} N^2\right) + O\left( \sum_{n=1}^N n^2 r^n \right)=O(1).
\]
Since $\sum_{n=1}^N \frac{r^n}{\norm{nx}}$ is increasing in $N$ (being a sum of positive terms), we obtain that the series
$\sum_{n=1}^\infty \frac{r^n}{\norm{nx}}$ converges. Since this is true for all $r$ with $0<r<1$, it follows that $R_x \geq 1$.

For $x \in \Omega$ let $R_x$ be the radius of convergence of the power series $\sum_{q=1}^\infty \frac{z^q}{\norm{qx}}$. 
We proved in Theorem \ref{roc} that for almost all $x \in \Omega$, $R_x=1$. 

\begin{theorem}
For  $x \in \Omega$, let $R_x$ be the radius of convergence of the power series $\sum_{q=1}^\infty \frac{z^q}{\norm{qx}}$, and let
$a_n=a_n(x)$ and $q_n=q_n(x)$. Then
\[
R_x = \liminf_{n \to \infty} a_{n+1}^{-1/q_n}.
\]
For any $0 \leq R \leq 1$ there is some $x \in \Omega$ such that $R_x=R$. 
\end{theorem}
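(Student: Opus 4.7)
The plan is to apply the Cauchy-Hadamard formula \eqref{cauchyhadamard}, which gives $R_x=\liminf_{q\to\infty}\norm{qx}^{1/q}$, and then to reduce this $\liminf$ over all positive integers to a $\liminf$ over the subsequence of denominators $q_n$ of convergents. For this, take any $q$ with $q_n\leq q<q_{n+1}$; by Theorem \ref{bestapproximation} one has $\norm{qx}\geq \norm{q_nx}$, and since $\norm{q_nx}<\frac{1}{2}<1$ and $q\geq q_n$, raising to a smaller positive exponent gives $\norm{q_nx}^{1/q}\geq \norm{q_nx}^{1/q_n}$. Thus $\norm{qx}^{1/q}\geq \norm{q_nx}^{1/q_n}$, and since the convergent denominators themselves form a subsequence, $\liminf_{q\to\infty}\norm{qx}^{1/q}=\liminf_{n\to\infty}\norm{q_nx}^{1/q_n}$.

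Next I would replace $\norm{q_nx}$ by $a_{n+1}$ up to a factor tending to $1$. From \eqref{upperbound} and \eqref{lowerbound} we have $\frac{1}{2q_{n+1}}<\norm{q_nx}<\frac{1}{q_{n+1}}$, and from the recurrence $q_{n+1}=a_{n+1}q_n+q_{n-1}$ we have $a_{n+1}q_n\leq q_{n+1}\leq (a_{n+1}+1)q_n$. Combining these,
\[
\bigl(2(a_{n+1}+1)q_n\bigr)^{-1/q_n}<\norm{q_nx}^{1/q_n}<\bigl(a_{n+1}q_n\bigr)^{-1/q_n}.
\]
Now $q_n^{-1/q_n}=\exp(-\log q_n/q_n)\to 1$, $2^{-1/q_n}\to 1$, and $(1+a_{n+1}^{-1})^{-1/q_n}\to 1$ (uniformly, regardless of whether $a_{n+1}$ stays bounded or not, since in either case the exponent $1/q_n$ tames the base). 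Hence the ratio $\norm{q_nx}^{1/q_n}/a_{n+1}^{-1/q_n}\to 1$, and therefore $R_x=\liminf_{n\to\infty}a_{n+1}^{-1/q_n}$.

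For the existence statement, I split into cases. For $R=1$ take $x=\frac{-1+\sqrt{5}}{2}$, whose partial quotients are all $1$, so $a_{n+1}^{-1/q_n}=1$ for every $n$. For $R=0$ build $a\in\mathbb{N}^{\mathbb{N}}$ inductively by choosing $a_{n+1}>e^{nq_n}$, which forces $a_{n+1}^{-1/q_n}<e^{-n}\to 0$, whence $R_{v(a)}=0$. For $0<R<1$ write $R=e^{-c}$ with $c>0$ and define $a_{n+1}=\lceil e^{cq_n}\rceil$ inductively (which is always a positive integer); then
\[
c\leq\frac{\log a_{n+1}}{q_n}\leq c+\frac{\log 2}{q_n},
\]
so $\log a_{n+1}/q_n\to c$ and $a_{n+1}^{-1/q_n}\to e^{-c}=R$, giving $R_{v(a)}=R$.

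The only nontrivial step is the second paragraph, where one must verify that the multiplicative error between $\norm{q_nx}^{1/q_n}$ and $a_{n+1}^{-1/q_n}$ tends to $1$ even when $a_{n+1}$ grows very rapidly; this is the main place where care is required, but it reduces to the elementary fact that $q_n^{-1/q_n}\to 1$ and $(1+a_{n+1}^{-1})^{-1/q_n}\to 1$. Everything else is a direct application of \eqref{upperbound}, \eqref{lowerbound}, Theorem \ref{bestapproximation}, and the Cauchy-Hadamard formula.
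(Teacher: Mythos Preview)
Your proof is correct and follows essentially the same approach as the paper: reduce the Cauchy--Hadamard $\liminf$ to the subsequence of convergent denominators via Theorem \ref{bestapproximation}, sandwich $\norm{q_n x}$ between constant multiples of $(a_{n+1}q_n)^{-1}$ using \eqref{upperbound}, \eqref{lowerbound}, and the recurrence, and then build explicit continued fractions for the existence statement. The only cosmetic difference is that you package the two inequalities as ``the ratio tends to $1$'' rather than proving $R_x\leq$ and $R_x\geq$ separately, and you use $\lceil\cdot\rceil$ instead of $[\cdot]$ in the construction; you should also explicitly fix $a_1$ (e.g.\ $a_1=1$) when you start the inductive constructions, but this is a trivial omission.
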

\begin{proof}
From the Cauchy-Hadamard formula \eqref{cauchyhadamard},
\[
R_x =\liminf_{q \to \infty} \norm{qx}^{1/q}.
\]
Then $R_x \leq \liminf_{n \to \infty} \norm{q_n x}^{1/q_n}$. 
On the one hand, by \eqref{upperbound}, 
$\norm{q_n x}<q_{n+1}^{-1}$, 
and $q_{n+1} = a_{n+1} q_n + q_{n-1} > a_{n+1} q_n$ hence
$\norm{q_n x} < a_{n+1}^{-1} q_n^{-1}$, and using that
$\lim_{n \to \infty} q_n^{-1/q_n}=1$,
\[
R_x \leq \liminf_{n \to \infty} a_{n+1}^{-1/q_n} q_n^{-1/q_n}
=\liminf_{n \to \infty} a_{n+1}^{-1/q_n}.
\]
On the other hand, let $q \geq 2$ and take $q_n \leq q < q_{n+1}$. 
Applying \eqref{lowerbound},
\[
\norm{q_n x} > \frac{1}{q_{n+1}+q_n} 
> \frac{1}{2q_{n+1}} = 
\frac{1}{2(a_{n+1}q_n+q_{n-1})}
> \frac{1}{4a_{n+1} q_n}.
\]
Then applying Theorem \ref{bestapproximation}, and using that $0<\norm{q_n x}<1$ and
$q \geq q_n$,
\[
\norm{q x}^{1/q} \geq \norm{q_n x}^{1/q}
\geq \norm{q_n x}^{1/q_n} >
\left( \frac{1}{4a_{n+1} q_n} \right)^{1/q_n}.
\]
As $(4q_n)^{-1/q_n} \to 1$ as $n \to \infty$, this implies
\[
R_x = \liminf_{q \to \infty} \norm{q x}^{1/q} \geq \liminf_{n \to \infty} a_{n+1}^{-1/q_n}.
\]

For $0<R<1$, let $R=e^{-r}$ for $r>0$. Define $a \in \mathbb{N}^\mathbb{N}$ as follows.
Define $a_1=1$. Suppose for $n \geq 1$ that we have defined $a_1,\ldots,a_n$
and thus $p_1,\ldots,p_n$ and $q_1,\ldots,q_n$. 
Define
$a_{n+1} = [e^{r q_n}]$.
Then 
$a_{n+1}^{1/q_n} \leq e^r$, so $a_{n+1}^{-1/q_n} \geq e^{-r}$. Therefore for $x=v(a)$, $R_x \geq e^{-r}$. 
Now, $e^{rq_n}>1$ so $a_{n+1}=[e^{rq_n}] \geq 2^{-1} e^{rq_n}$. Then $a_{n+1}^{1/q_n} \geq 2^{-1/q_n}
e^r$, hence
\[
R_x = \liminf_{n \to \infty} a_{n+1}^{-1/q_n} \leq  \liminf_{n \to \infty} 2^{1/q_n} e^{-r} = e^{-r}.
\]
We have therefore established that when $x=v(a)$, $R_x = e^{-r}$. 

For $R=0$, define $a \in \mathbb{N}^\mathbb{N}$ by
$a_1=1$ and $a_{n+1} = [e^{nq_n}]$, which satisfies
$a_{n+1} \geq 2^{-1} e^{nq_n}$. For
$x=v(a)$,
\[
R_x = \liminf_{n \to \infty} a_{n+1}^{-1/q_n} \leq 
\liminf_{n \to \infty} 2^{1/q_n} e^{-n} = 0.
\]

For $R=1$, define $a \in \mathbb{N}^\mathbb{N}$ by $a_n=1$ for all $n \geq 1$.
Namely, $v(a) = \frac{-1+\sqrt{5}}{2} \in \Omega$. 
For $x=v(a)$ it is immediate that $R_x \geq 1$.
\end{proof}

Since $R(nx) <1$, of course the power series $\sum_{n=1}^\infty R(nx) z^n$ has radius of convergence $\geq 1$.  
The following result, for which P\'olya and Szeg\H o \cite[p.~280, Part II, No. 168]{polyaI}  cite Hecke, shows in particular that the radius of convergence of this power series is
$\leq 1$ for $x \in \Omega$ and is thus equal to $1$.

\begin{theorem}
For $x \in \Omega$, let
\[
f(z)=\sum_{n=1}^\infty R(nx) z^n,\qquad |z|<1.
\]
We have
\[
\lim_{r \to 1^-} (1-r) f(re^{2\pi i x}) = \frac{1}{2\pi i }.
\]
\end{theorem}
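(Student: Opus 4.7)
The plan is to substitute $w = re^{2\pi i x}$ so that $f(re^{2\pi i x}) = \sum_{n=1}^\infty a_n r^n$ with $a_n = R(nx) e^{2\pi i n x}$, and to extract the limit via the standard implication Ces\`aro $\Rightarrow$ Abel.

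First I would compute the Ces\`aro averages of $(a_n)$. Since $e^{2\pi i n x} = e^{2\pi i R(nx)}$, one has $a_n = F(R(nx))$ where $F \colon [0,1] \to \mathbb{C}$ is given by $F(t) = t e^{2\pi i t}$. This $F$ is bounded and continuous on $[0,1)$, hence Riemann integrable on $[0,1]$. Because $x \in \Omega$, Theorem~\ref{uniformtheorem} tells us that $(nx)$ is uniformly distributed modulo $1$, so \eqref{riemann} applied to the real and imaginary parts of $F$ yields
\[
\frac{1}{N} \sum_{n=1}^N R(nx) e^{2\pi i n x} \;\longrightarrow\; \int_0^1 t\, e^{2\pi i t}\, dt \;=\; \frac{1}{2\pi i},
\]
the integral being computed by one integration by parts.

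Next I would invoke (or prove from scratch) the elementary Tauberian lemma that if $s_N = \sum_{n=1}^N a_n$ satisfies $s_N / N \to A$ then $(1-r) \sum_{n=1}^\infty a_n r^n \to A$ as $r \to 1^-$: summation by parts rewrites the Abel mean as $(1-r)^2 \sum_{n \geq 1} s_n r^n$, after which the decomposition $s_n = An + \varepsilon_n$ with $\varepsilon_n = o(n)$, together with the identity $(1-r)^2 \sum_n n r^n = r$, produces a bound of the form $\epsilon\, r + o(1)$ as $r \to 1^-$ for arbitrary $\epsilon>0$. Applied with $a_n = R(nx) e^{2\pi i n x}$ and $A = \frac{1}{2\pi i}$, this is the asserted limit. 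The only delicate bookkeeping is confirming the Riemann integrability of the complex-valued $F$, which has a jump when viewed on the circle, and pinning down the periodicity identity $F(R(nx)) = R(nx) e^{2\pi i n x}$; neither causes any real trouble, so I do not expect a substantive obstacle beyond these routine checks.
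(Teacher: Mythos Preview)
Your proposal is correct and follows essentially the same route as the paper: both write $a_n=R(nx)e^{2\pi inx}=F(R(nx))$ with $F(t)=te^{2\pi it}$, use uniform distribution of $(nx)$ to get the Ces\`aro mean $\frac{1}{2\pi i}$, and then pass from Ces\`aro to Abel summability (the paper cites P\'olya--Szeg\H{o} for this last step, while you sketch it directly via summation by parts).
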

\begin{proof}
Since $x \in \Omega$, the sequence $nx$ is uniformly distributed modulo $1$. Therefore, with
$f(t)=te^{2\pi i t}$ we have by \eqref{riemann} that
\[
\lim_{N \to \infty} \frac{1}{N} \sum_{n=1}^N R(nx) e^{2\pi inx} =
\lim_{N \to \infty} \frac{1}{N} \sum_{n=1}^N f(R(nx))=
 \int_0^1 te^{2\pi i t} dt=\frac{1}{2\pi i}.
\]

We will use the following result \cite[p.~21, Part I, No. 88]{polyaI}.
If a sequence of complex numbers $a_n$  satisfies
$\lim_{N \to \infty} \frac{1}{N} \sum_{n=1}^N a_n=s$, then
\[
\lim_{t \to 1^-} (1-t)\sum_{n=1}^\infty a_n t^n=s.
\]
Let $a_n=R(nx) e^{2\pi inx}$, and we thus have
\[
\lim_{t \to 1^-} (1-t)\sum_{n=1}^\infty R(nx) e^{2\pi inx} t^n=\frac{1}{2\pi i}.
\]
\end{proof}

It follows from the above theorem that if $x \in \Omega$ then
$|z|=1$ is a natural boundary of the function $f$ defined on the open unit disc by $f(z)=\sum_{n=1}^\infty R(nx)z^n$; cf. Segal \cite[p.~255, Chapter 6]{segal}, who writes about this
power series, and who gives a thorough introduction to natural boundaries in the same chapter.
Breur and Simon \cite{breuer} prove a generalization of this result.

Hata \cite[p.~173, Problem 12.6]{hata} 
mentions the appearance of the function  $f$ from the above theorem in the study of the Caianiello neuron equations.

\section{Product}
We will use the following lemma proved by Hardy and Littlewood \cite[p.~89]{XXIV},  whose brief proof we expand.

\begin{lemma}
Let $\psi:(0,\infty) \to \mathbb{R}$ be positive and nondecreasing.
If 
\[
\sum_{k=1}^\infty \frac{1}{k\psi(k)}<\infty,
\]
then
for almost all $x \in \Omega$, there exists some $H$ such that for all $n \geq 1$ and for all real $h \geq H$, there are at most $\max\{\frac{n\psi(h)}{h},1\}$ integers $m \in \{1,\ldots,n\}$
that satisfy $\norm{mx} < \frac{1}{h}$.
\label{hllemma}
\end{lemma}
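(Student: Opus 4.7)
The plan is to combine a pigeonhole argument on gaps between good $m$'s with a dyadic Borel-Cantelli application.

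First, I would establish the following \emph{pigeonhole step}. Suppose $m_1<\cdots<m_r$ lie in $\{1,\ldots,n\}$ with $\norm{m_i x}<1/h$ for each $i$. The consecutive gaps $m_{i+1}-m_i$ are positive integers summing to $m_r-m_1\le n-1$, so at least one gap $k:=m_{i+1}-m_i$ satisfies $1\le k\le n/(r-1)$, and by the triangle inequality $\norm{kx}\le\norm{m_{i+1}x}+\norm{m_ix}<2/h$. So if the count $r=E_{n,h}(x)$ of good $m$'s is $\ge 2$ and strictly exceeds $n\psi(h)/(2h)$, then $r-1\ge r/2>n\psi(h)/(4h)$, hence there is $k$ with $1\le k\le n/(r-1)<4h/\psi(h)$ and $\norm{kx}<2/h$.

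Second, I would \emph{define the exceptional set} $B_h=\{x\in\Omega:\exists\,k\in\mathbb{Z}\text{ with }1\le k\le 4h/\psi(h)\text{ and }\norm{kx}<2/h\}$. The contrapositive of the pigeonhole step says: if $x\notin B_h$ then $E_{n,h}(x)\le\max\{n\psi(h)/(2h),1\}$ for every $n\ge 1$. A union bound gives
\[
\mu(B_h)\le\sum_{1\le k\le 4h/\psi(h)}\mu\{\norm{kx}<2/h\}\le \frac{4h}{\psi(h)}\cdot\frac{4}{h}=\frac{16}{\psi(h)}.
\]

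Third, I would pass to \emph{dyadic scales} $h_j=2^j$. Since $\psi$ is nondecreasing, the sequence $1/(k\psi(k))$ is nonincreasing, so Cauchy's condensation test yields the equivalence $\sum_k 1/(k\psi(k))<\infty\iff\sum_j 1/\psi(2^j)<\infty$. Hence $\sum_j\mu(B_{2^j})<\infty$, and by the Borel-Cantelli lemma, for almost every $x\in\Omega$ there is $J=J(x)$ with $x\notin B_{2^j}$ for all $j\ge J$. For such $x$,
\[
E_{n,2^j}(x)\le\max\bigl\{n\psi(2^j)/(2\cdot 2^j),\,1\bigr\}\quad\text{for all }j\ge J,\ n\ge 1.
\]

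Finally, I would \emph{extend to arbitrary $h$}. Set $H=2^J$ and take any real $h\ge H$; pick $j\ge J$ with $2^j\le h<2^{j+1}$. Since $\{m:\norm{mx}<1/h\}\subset\{m:\norm{mx}<1/2^j\}$, we have $E_{n,h}(x)\le E_{n,2^j}(x)$. Moreover $\psi(2^j)\le\psi(h)$ and $2\cdot 2^j>h$ together give $\psi(2^j)/(2\cdot 2^j)\le\psi(h)/h$. Combining these,
\[
E_{n,h}(x)\le\max\bigl\{n\psi(2^j)/(2\cdot 2^j),1\bigr\}\le\max\{n\psi(h)/h,1\},
\]
which is the desired conclusion. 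The main subtlety is the bookkeeping of constants: the factor $1/2$ in the dyadic bound is exactly what is needed to absorb the factor $2$ lost when passing from a general $h$ up to the next dyadic scale $2^{j+1}$, and for this reason the set $B_h$ is defined with the slightly generous range $k\le 4h/\psi(h)$ rather than $2h/\psi(h)$.
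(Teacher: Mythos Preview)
Your proof is correct but organized differently from the paper's. Both arguments hinge on the same pigeonhole observation: too many $m\in\{1,\dots,n\}$ with $\norm{mx}<1/h$ force a small gap $k$ with $\norm{kx}<2/h$ and $k\lesssim h/\psi(h)$. The difference is in where Borel--Cantelli enters. The paper first invokes Khinchin's lemma (Lemma~\ref{khinchin}) with $f(k)=2/(k\psi(k))$ to obtain, for almost every $x$, a single Diophantine condition: there is $K$ with $\norm{kx}\ge 2/(k\psi(k))$ for all $k\ge K$. Then for any $n$ and any $h\ge H$ (with $H$ chosen so that $\min_{k<K}\norm{kx}\ge 2/H$ and $\psi(H)\ge 1$), the gap $\mu$ produced by pigeonhole satisfies $\mu<h/\psi(h)$ and $\norm{\mu x}<2/h$, hence $\norm{\mu x}<2/(\mu\psi(\mu))$ and $\mu\ge K$, contradicting the Diophantine condition directly---no scaling in $h$ is needed. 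You instead bound $\mu(B_h)\lesssim 1/\psi(h)$ at each fixed $h$, run Borel--Cantelli along the dyadic sequence $h_j=2^j$ via Cauchy condensation, and then interpolate to general $h$, carrying the extra factor $\tfrac12$ in the intermediate bound precisely so it absorbs the dyadic loss. The paper's route is shorter because the Khinchin property works for all $h$ at once; your route has the virtue of being self-contained (it does not appeal to Lemma~\ref{khinchin}) and of displaying an explicit tail estimate $\mu(B_h)\le 16/\psi(h)$.
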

\begin{proof}
By Lemma~\ref{khinchin},
for almost all $x \in \Omega$ there is some $K$ such that if $k \geq K$ then
\begin{equation}
\norm{kx} \geq \frac{2}{k\psi(k)}.
\label{Kcontradiction}
\end{equation}
Let $H$ be  large enough  so that
\[
\min_{k < K} \norm{kx} \geq \frac{2}{H};
\]
also let $\psi(H) \geq 1$.
Now suppose by contradiction that there is some $n \geq 1$ and some $h \geq H$ such that there are more
than $\max\{\frac{n\psi(h)}{h},1\}$ integers $m \in \{1,\ldots,n\}$
that satisfy $\norm{mx} < \frac{1}{h}$.
Then there are some $1 \leq m_1 < m_2 \leq n$ satisfying $\norm{m_1 x}<\frac{1}{h}$ and $\norm{m_2 x}<
\frac{1}{h}$ and such that
\[
\mu=m_2-m_1< \frac{n}{\frac{n\psi(h)}{h}}=\frac{h}{\psi(h)},
\]
so $\mu \psi(h)<h$. 
On the other hand,
\[
\norm{\mu x} \leq \norm{m_1 x}+\norm{m_2 x}<\frac{1}{h}+\frac{1}{h}=\frac{2}{h},
\]
so $h<\frac{2}{\norm{\mu x}}$. Thus $\mu \psi(h)<\frac{2}{\norm{\mu x}}$, i.e.,
\[
\norm{\mu x}< \frac{2}{\mu \psi(h)} \leq \frac{2}{\mu \psi(\mu)};
\]
$\mu < h$ because $\mu<\frac{h}{\psi(h)}$ and $\psi(h) \geq \psi(H) \geq 1$.
Moreover, since $\norm{\mu x}<\frac{2}{h} \leq  \frac{2}{H}$, we have $\mu \geq K$. This contradicts \eqref{Kcontradiction}.
\end{proof}

Hardy and Littlewood \cite[p.~89, Theorem~4]{XXIV} prove the following theorem
that  gives us the conclusion \eqref{riemann} for certain functions that are not Riemann integrable on $[0,1]$.

\begin{theorem}
Let $f:(0,1) \to \mathbb{R}$ be  nonnegative, let $f$ be nonincreasing on $(0,\frac{1}{2})$ and nondecreasing on $(\frac{1}{2},1)$, and let
\[
\int_0^1 f(t) dt < \infty.
\] 
Let $\psi:(1,\infty) \to \mathbb{R}$ be a positive and nondecreasing function such that
\[
\sum_{k=2}^\infty \frac{1}{k\psi(k)} < \infty.
\]
If
\[
\int_0^1 f(t) \left( \psi\left(\frac{1}{t}\right)+\psi\left(\frac{1}{1-t}\right) \right) dt<\infty,
\]
then for almost all $x \in \Omega$,
\[
\lim_{n \to \infty} \frac{1}{n} \sum_{m=1}^n f(R(mx))= \int_0^1 f(t)dt.
\]
\label{fpsi}
\end{theorem}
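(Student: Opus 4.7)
The plan is to split $f$ into a Riemann-integrable part and a singular tail. Fix an integer $N\ge 2$ and write $f=f_N+g_N$, where $f_N(t)=f(t)\mathbf{1}_{[1/N,\,1-1/N]}(t)$. Because $f$ is monotone on $(0,1/2)$ and on $(1/2,1)$, $f_N$ is bounded and piecewise monotone, hence Riemann integrable on $[0,1]$. Theorem \ref{uniformtheorem} together with \eqref{riemann} then yields, for every $x\in\Omega$,
\[
\lim_{n\to\infty}\frac{1}{n}\sum_{m=1}^n f_N(R(mx))=\int_0^1 f_N(t)\,dt,
\]
while $\int_0^1 f_N\,dt\uparrow\int_0^1 f\,dt$ as $N\to\infty$ by monotone convergence. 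So I need only show that for almost all $x\in\Omega$,
\[
\lim_{N\to\infty}\limsup_{n\to\infty}\frac{1}{n}\sum_{m=1}^n g_N(R(mx))=0.
\]

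By the symmetry $t\leftrightarrow 1-t$, which interchanges the two halves of the hypothesis involving $\psi(1/t)$ and $\psi(1/(1-t))$, it suffices to control
\[
T_N(n,x):=\sum_{\substack{1\le m\le n\\ R(mx)<1/N}} f(R(mx)).
\]
Group by level: for $k\ge N$ let $A_k=\#\{m\le n: R(mx)\in(1/(k+1),1/k]\}$ and $S_k=\#\{m\le n: R(mx)<1/k\}$, so $A_k=S_k-S_{k+1}$ and $S_k=0$ for $k$ large enough. Since $f$ is nonincreasing on $(0,1/2)$, $f(R(mx))\le f(1/(k+1))$ on $A_k$, and Abel summation gives
\[
T_N(n,x)\le S_N f(1/(N+1))+\sum_{k>N} S_k\bigl(f(1/(k+1))-f(1/k)\bigr).
\]

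Now invoke Lemma \ref{hllemma} with the given $\psi$: on a full-measure set $X\subset\Omega$, for each $x\in X$ there is $H(x)$ with $S_k\le n\psi(k)/k+1$ whenever $k\ge H(x)$. Fix such an $x$, take $N\ge H(x)$, substitute, and divide by $n$. The main step is then to show the bound
\[
\sum_{k>N}\frac{\psi(k)}{k}\bigl(f(1/(k+1))-f(1/k)\bigr)\le C\int_0^{1/N} f(t)\psi(1/t)\,dt,
\]
obtained by Riemann-Stieltjes comparison: on each interval $(1/(k+1),1/k]$ one has $\psi(1/t)\ge\psi(k)$ and $f(t)\ge f(1/k)$, so $\int_{1/(k+1)}^{1/k} f(t)\psi(1/t)\,dt\ge f(1/k)\psi(k)/(k(k+1))$, and a further Abel summation converts the left-hand sum into such integral tails. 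By hypothesis the right-hand side tends to $0$ as $N\to\infty$, and the boundary term $S_N f(1/(N+1))/n$ is controlled similarly.

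The "$+1$" remainders from Lemma \ref{hllemma} contribute, after telescoping and truncation at $k^{\star}:=\max\{k:S_k\ge 1\}$, a total at most $f(1/(k^{\star}+1))/n$. On a further full-measure set—obtained by intersecting $X$ with the set given by Lemma \ref{partialquotients} (so that $q_{n+1}(x)$ grows only polynomially and hence $\min_{m\le n}\norm{mx}\gg n^{-c(x)}$)—this quantity tends to $0$ for each fixed $N$. Combining everything and taking $\limsup_n$ followed by $N\to\infty$ proves the theorem. The hard part will be the Riemann-Stieltjes comparison step and the bookkeeping that shows the "$+1$" debris is absorbed; the potential pitfall is that without the polynomial control on $\min_{m\le n}\norm{mx}$ (which holds a.e.\ but fails for Liouville-like $x$), the single "closest-approach" term $f(\min_m R(mx))/n$ could be too large.
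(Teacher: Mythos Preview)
Your decomposition $f=f_N+g_N$ and appeal to Lemma~\ref{hllemma} for the tail is exactly the paper's strategy; the paper just uses dyadic shells $[\delta/2^{k+1},\delta/2^k)$ rather than your harmonic ones $(1/(k+1),1/k]$, which makes the integral comparison a one-line estimate (each shell's count times $f$ at its inner endpoint is dominated by $\int f(t)\psi(1/t)\,dt$ over the adjacent dyadic interval, losing only a factor~$2$) and sidesteps your second Abel summation entirely.

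There is, however, a genuine gap in your handling of the ``$+1$'' debris. From Lemma~\ref{partialquotients} (as in the proof of Theorem~\ref{omegabound}) you only get $q_{\ell+1}<Kq_\ell^2$, hence $\min_{m\le n}\norm{mx}\gg n^{-2}$, so you are relying on $f(cn^{-2})/n\to 0$. The hypotheses do not force this. Take $\psi(h)=(\log h)^2$ and, for small $t$, $f(t)=t^{-1}(\log(1/t))^{-4}$ (extended symmetrically about $t=\tfrac12$ and adjusted on a compact subinterval to be nonincreasing throughout). Then $\sum 1/(k\psi(k))<\infty$ and $\int_0^{1/2} f(t)\psi(1/t)\,dt=\int_0^{1/2} dt/\bigl(t(\log(1/t))^2\bigr)<\infty$, yet $f(cn^{-2})/n\asymp n/(\log n)^4\to\infty$. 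The correct fix needs no second full-measure set: the set $X$ you already took from Lemma~\ref{hllemma} carries, via \eqref{Kcontradiction} in its proof (which is just Lemma~\ref{khinchin} applied with the \emph{given}~$\psi$), the bound $\norm{mx}\ge 2/(m\psi(m))$ for all large $m$, so $k^{\star}\lesssim n\psi(n)$. Since the nonincreasing integrable function $t\mapsto f(t)\psi(1/t)$ satisfies $t\,f(t)\,\psi(1/t)\to 0$ and $\psi$ is nondecreasing (so $\psi(n\psi(n))\ge\psi(n)$), this yields $f\bigl(c/(n\psi(n))\bigr)/n\to 0$, which is exactly what you need.
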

\begin{proof}
For $0<\delta<\frac{1}{2}$, define 
\[
f_\delta(t)=\begin{cases}f(t),&\delta\leq t \leq 1-\delta,\\
0,&0<t < \delta \quad \textrm{or} \quad 1-\delta < t<1.
\end{cases}
\]
From Lemma~\ref{hllemma}, for almost all $x \in \Omega$ there is some $C$ such that for all $n$ and $h$ there are at most
$C\frac{n\psi(h)}{h}$ integers $m \in \{1,\ldots,n\}$ satisfying $\norm{mx} < \frac{1}{h}$.
Let
\[
S_n=\frac{1}{n}\sum_{m=1}^n f(R(mx))=S_n^1(\delta)+S_n^2(\delta),
\]
where
\[
S_n^1(\delta)=\frac{1}{n}\sum_{m=1}^n f_\delta(R(mx))
\]
and
\begin{eqnarray*}
S_n^2(\delta)&=&\frac{1}{n}\sum_{m=1}^n f(R(mx))-f_\delta(R(mx))\\
&=&\frac{1}{n}\sum_{\stackrel{1 \leq m \leq n}{\norm{mx} < \delta}} f(R(mx))\\
&=&\frac{1}{n}\sum_{k=0}^\infty \sum_{\stackrel{1 \leq m \leq n}{\frac{\delta}{2^{k+1}} \leq \norm{mx} < \frac{\delta}{2^k}}} f(R(mx))\\
&=&\frac{1}{n} \sum_{k=0}^\infty T_{k,n}(\delta).
\end{eqnarray*}
There are at most $C\frac{n\psi\left( \frac{2^k}{\delta} \right)}{\frac{2^k}{\delta}}$ integers $m \in \{1, \ldots, n\}$ that satisfy 
$\norm{mx} < \frac{\delta}{2^k}$, thus, as $\psi$ is nondecreasing, there are at most $2C\frac{n\delta\psi\left( \frac{2^k}{\delta} \right)}{2^{k+1}} \leq 2C \frac{n\delta \psi\left( \frac{2^{k+1}}{\delta} \right)}{2^{k+1}}$ 
terms in $T_{k,n}(\delta)$. 
For each term $f(R(mx))$ in $T_{k,n}(\delta)$, since $\frac{\delta}{2^{k+1}} \leq \norm{mx}$ we have, by assumption on $f$, either $f(R(mx)) \leq f\left( \frac{\delta}{2^{k+1}} \right)$ or
$f(R(mx)) \leq f\left(1-  \frac{\delta}{2^{k+1}} \right)$, and hence 
\[
f(R(mx)) \leq f\left( \frac{\delta}{2^{k+1}} \right) + f\left(1-  \frac{\delta}{2^{k+1}} \right).
\]
Therefore,
\begin{eqnarray*}
S_n^2(\delta)&\leq&\frac{1}{n}\sum_{k=0}^\infty  2C \frac{n\delta \psi\left( \frac{2^{k+1}}{\delta} \right)}{2^{k+1}} \left(f\left( \frac{\delta}{2^{k+1}} \right) + f\left(1-  \frac{\delta}{2^{k+1}} \right) \right)\\
&=&4C\sum_{k=0}^\infty  \frac{\delta}{2^{k+2}} \psi\left( \frac{2^{k+1}}{\delta} \right) f\left( \frac{\delta}{2^{k+1}} \right) 
+4C\sum_{k=0}^\infty \frac{\delta}{2^{k+2}} \psi\left( \frac{2^{k+1}}{\delta} \right) f\left(1-  \frac{\delta}{2^{k+1}} \right)\\
&\leq&4C\int_0^{\frac{\delta}{2}} \psi\left(\frac{1}{t}\right) f(t) dt + 4C\int_{1-\frac{\delta}{2}}^1 \psi\left(\frac{1}{1-t}\right) f(t) dt.
\end{eqnarray*}
Let $\epsilon>0$. Because $\int_0^1 f(t) \left( \psi\left(\frac{1}{t}\right)+\psi\left(\frac{1}{1-t}\right) \right) dt<\infty$, there exists a $\delta_1$ such that if
$\delta \leq \delta_1$ then $S_n^2(\delta)<\epsilon$.

On the other hand, since $f_\delta$ is Riemann integrable on $[0,1]$ and because, by \eqref{irratunif}, the sequence $mx$ is uniformly distributed modulo $1$, we obtain from
\eqref{riemann} that
\[
\lim_{n \to \infty} S_n^1(\delta)=\int_0^1 f_\delta(t) dt=\int_\delta^{1-\delta} f(t)dt.
\]
As $\int_0^1 f(t)dt < \infty$, there exists a $\delta_2$ such that for $\delta \leq \delta_2$ and for sufficiently large $n$,
\[
\left| S_n^1(\delta)-\int_0^1 f(t) dt\right| \leq \left|S_n^1(\delta) - \int_\delta^{1-\delta} f(t) dt \right| +
\left| \int_0^\delta f(t) dt \right| + \left| \int_{1-\delta}^1 f(t) dt \right| < 3\epsilon.
\]
Therefore, for sufficiently large $n$ and for sufficiently small $\delta$,
\[
\left|S_n - \int_0^1 f(t)dt \right| \leq \left|S_n^1(\delta) - \int_0^1 f(t)dt  \right| + \left|S_n^2(\delta) \right|
<
 4\epsilon.
\]
Thus for sufficiently large $n$,
\[
\left|S_n - \int_0^1 f(t)dt \right| \leq 4\epsilon.
\]
\end{proof}

By the Birkhoff ergodic theorem \cite[p.~44, Theorem~2.30]{einsiedler}, if $f \in L^1[0,1]$ and $x \in \Omega$, then for almost all
$\alpha \in [0,1]$,
\[
\lim_{n \to \infty} \frac{1}{n} \sum_{j=1}^n f(R(\alpha+jx))=\int_0^1 f(t) dt.
\]
This equality holding for $\alpha=0$ is the conclusion of Theorem~\ref{fpsi}. 

Baxa \cite{MR2189068} reviews further results that give conditions when a function $f:[0,1] \to \mathbb{R} \cup \{+\infty\}$ that is not Riemann integrable on $[0,1]$ nevertheless satisfies
\[
\lim_{n \to \infty} \frac{1}{n} \sum_{m=1}^n f(R(mx))= \int_0^1 f(t)dt
\]
for certain $x \in \Omega$. Oskolkov \cite[p.~170, Theorem~1]{MR1044053}  shows that if $f:(0,1) \to \mathbb{R}$ satisfies
$\lim_{t \to 0+} f(t)=+\infty$ and $\lim_{t \to 1-} f(t)=+\infty$, and also the improper Riemann integral of $f$ on $[0,1]$ exists, then, for $x \in \Omega$,
\[
\lim_{n \to \infty} \frac{1}{n} \sum_{m=1}^n f(R(mx))= \int_0^1 f(t)dt
\]
if and only if 
\[
\lim_{n \to \infty} \frac{1}{q_n(x)} f(R(q_n(x) x))=0,
\]
where $q_n(x)$ is the denominator of the $n$th convergent of the continued fraction expansion of $x$.

Driver, Lubinsky, Petruska and Sarnak \cite{sarnak}

Using Theorem \ref{fpsi} we can now prove the following theorem of Hardy and Littlewood \cite[p.~88, Theorem~2]{XXIV}.

\begin{theorem}
For almost all $x \in \Omega$, 
\[
\lim_{n \to \infty} \left( \prod_{k=1}^n |\sin k\pi x| \right)^{1/n}=\frac{1}{2}.
\]
\label{bigtheorem}
\end{theorem}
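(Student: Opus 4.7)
The plan is to take logarithms and reduce the claim to a statement of the form appearing in Theorem~\ref{fpsi}. Since $x \in \Omega$ implies $kx \notin \mathbb{Z}$, we have $|\sin k\pi x| = |\sin \pi R(kx)| > 0$, so
\[
\frac{1}{n} \log \prod_{k=1}^n |\sin k\pi x| = - \frac{1}{n} \sum_{k=1}^n f(R(kx)), \qquad f(t) := -\log |\sin \pi t|.
\]
Thus it suffices to show that for almost all $x \in \Omega$,
\[
\lim_{n \to \infty} \frac{1}{n} \sum_{k=1}^n f(R(kx)) = \int_0^1 f(t)\,dt = \log 2;
\]
the value of the integral is the classical identity $\int_0^1 \log \sin(\pi t)\, dt = -\log 2$, which I would just quote (it follows from $\prod_{k=1}^{n-1} \sin(k\pi/n) = n/2^{n-1}$ and a Riemann sum argument, or equivalently from $\int_0^\pi \log \sin u \, du = -\pi \log 2$).

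Next I would apply Theorem~\ref{fpsi} to $f$ with $\psi(k) := (\log k)^2$ for $k \geq 2$. The function $f$ is nonnegative on $(0,1)$ (since $|\sin \pi t| \leq 1$), nonincreasing on $(0,\tfrac{1}{2})$ and nondecreasing on $(\tfrac{1}{2}, 1)$ (since $|\sin \pi t|$ has the opposite monotonicity), and $\int_0^1 f\,dt = \log 2 < \infty$ as noted. The series $\sum_{k \geq 2} \frac{1}{k (\log k)^2}$ converges by the integral test. The main verification is the remaining integrability condition
\[
\int_0^1 f(t) \left( \psi(1/t) + \psi(1/(1-t)) \right) dt < \infty.
\]
By symmetry $f(1-t) = f(t)$, so it suffices to handle the contribution near $t = 0$. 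Using the elementary bound $\sin \pi t \geq 2t$ for $t \in (0, \tfrac{1}{2}]$, one has $f(t) \leq \log(1/(2t))$, so
\[
\int_0^{1/2} f(t) \psi(1/t)\, dt \leq \int_0^{1/2} \log\!\left(\tfrac{1}{2t}\right)\left(\log \tfrac{1}{t}\right)^2 dt = \int_2^\infty \frac{\log(u/2) (\log u)^2}{u^2}\, du,
\]
and the substitution $u = 1/t$ makes the convergence (dominated by $\int_2^\infty (\log u)^3 u^{-2} du$) obvious.

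With the hypotheses checked, Theorem~\ref{fpsi} yields, for almost all $x \in \Omega$,
\[
\frac{1}{n} \sum_{k=1}^n f(R(kx)) \longrightarrow \log 2,
\]
and exponentiating gives $\left(\prod_{k=1}^n |\sin k\pi x|\right)^{1/n} \to e^{-\log 2} = \frac{1}{2}$. The only step requiring any real care is the choice of $\psi$: one needs $\psi$ large enough to guarantee convergence in the Borel--Cantelli step hidden inside Lemma~\ref{hllemma} (which underlies Theorem~\ref{fpsi}), yet small enough that $\int_0^1 f(t) \psi(1/t)\, dt$ converges despite the logarithmic blow-up of $f$ at the endpoints. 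The choice $\psi(k) = (\log k)^2$ is comfortably in the sweet spot, since $f$ is only logarithmically singular; anything of the form $\psi(k) = (\log k)^{1+\epsilon}$ would work equally well.
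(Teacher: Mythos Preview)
Your proposal is correct and follows essentially the same route as the paper: reduce to Theorem~\ref{fpsi} with $f(t)=-\log\sin\pi t$ and $\psi(k)=(\log k)^2$, then verify the three hypotheses (monotonicity of $f$, convergence of $\sum 1/(k\psi(k))$, and the weighted integrability condition). Your use of the symmetry $f(1-t)=f(t)$ to reduce the integrability check to a neighborhood of $t=0$ is a minor streamlining over the paper's direct two-sided estimate, but the argument is otherwise identical.
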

\begin{proof}
Let $f(t)=-\log \sin \pi t$. Using $\cos(t-\frac{\pi}{2})=\sin t$ and $\sin 2t=2\sin t \cos t$, one can check that  $\int_0^1 \log \sin \pi t dt=-\log 2$. (The earliest evaluation of this integral of which we are aware
is by Euler \cite{E393}, who gives two derivations, the first using the Euler-Maclaurin summation formula, the power
series expansion for $\log\left( \frac{1+z}{1-z} \right)$, and the power series expansion of $z\cot(z)$,
and the second using the Fourier series of
$\log | \sin t|$.) Thus, $\int_0^1 f(t) dt=\log 2<\infty$. So $f$ satisfies the conditions of Theorem \ref{fpsi}.

Let $\psi(t)=(\log t)^2$. First, upper bounding the series by an integral,
\[
\sum_{k=2}^\infty \frac{1}{k(\log k)^2}\leq \frac{1}{2(\log 2)^2}+\int_2^\infty \frac{1}{t(\log t)^2} dt=
\frac{1}{2(\log 2)^2}+\log 2 <\infty.
\]
Second, 
\begin{eqnarray*}
\int_0^1 f(t) \left( \psi\left(\frac{1}{t}\right)+\psi\left(\frac{1}{1-t}\right) \right) dt&=&\int_0^1 f(t) \left(\psi(t)+\psi(1-t) \right) dt\\
&=&\int_0^{\frac{1}{2}} -2\log \sin (\pi t)\\
&& \left((\log t)^2+(\log(1-t))^2\right) dt \\
&\leq&\int_0^{\frac{1}{2}} -2\log (2t)  \left((\log t)^2+(\log(1-t))^2\right) dt \\
&<&\infty.
\end{eqnarray*}
Therefore by Theorem \ref{fpsi},
for almost all $x \in \Omega$,
\[
\lim_{n \to \infty} \frac{1}{n} \sum_{m=1}^n -\log \sin(\pi R(mx))= \int_0^1 -\log \sin \pi t dt,
\]
i.e.
\[
\lim_{n \to \infty} \frac{1}{n} \sum_{m=1}^n \log |\sin \pi m x|= \log \frac{1}{2}.
\]
\end{proof}

Hardy and Littlewood give another proof \cite[p.~86, Theorem~1]{XXIV} of the above theorem, which we now work out.
This proof is complicated and we greatly expand on the abbreviated presentation of Hardy and Littlewood.

We remind ourselves that the Cauchy-Hadamard formula states that the radius of
convergence $R$ of a  power series $\sum a_n z^n$ satisfies
\[
R =\frac{1}{\limsup_{n \to \infty} |a_n|^{1/n}} =  \liminf_{n \to \infty} |a_n|^{-1/n}.
\]

\begin{theorem}
Fix $x \in \Omega$ and write $q_0=e^{2\pi ix}$. Let $\rho$ and $R$ respectively be   the radii of convergence of the power series
\[
f(z) = \sum_{n=1}^\infty \frac{z^n}{n(1-q_0^n)},\qquad F(z) = 1 + \sum_{n=1}^\infty \frac{z^n}{(1-q_0) (1-q_0^2)\cdots (1-q_0^n)}.
\]
Then $R=\rho$, and if $|z|<\rho$ then $F(z) = e^{f(z)}$. 
\label{efz}
\end{theorem}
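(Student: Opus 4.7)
The plan is to derive the identity $F = e^{f}$ as formal power series via a functional equation, and then read off equality of radii of convergence together with the pointwise identity.

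First I would establish the functional equation $(1-z) F(z) = F(q_0 z)$. Since $x \in \Omega$, $q_0^n \neq 1$ for all $n \geq 1$, so all coefficients are well-defined, and a direct telescoping in $\mathbb{C}[[z]]$ gives
\begin{align*}
F(z) - F(q_0 z) &= \sum_{n=1}^\infty \frac{(1-q_0^n)\, z^n}{(1-q_0)(1-q_0^2)\cdots(1-q_0^n)} \\
&= \sum_{n=1}^\infty \frac{z^n}{(1-q_0)\cdots(1-q_0^{n-1})} = z F(z),
\end{align*}
where the empty product is interpreted as $1$. This is a formal power series identity and also holds analytically on $\{|z| < R\}$.

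Next I would take formal logarithms. Because $F(0) = 1$, the formal power series $g(z) = \log F(z) = \sum_{n \geq 1} b_n z^n$ is well-defined, and applying $\log$ to $(1-z) F(z) = F(q_0 z)$ (using that $\log$ turns products of units in $1 + z\mathbb{C}[[z]]$ into sums and commutes with the substitution $z \mapsto q_0 z$) gives
\begin{equation*}
g(z) - g(q_0 z) = -\log(1-z) = \sum_{n=1}^\infty \frac{z^n}{n}.
\end{equation*}
Comparing coefficients yields $b_n (1 - q_0^n) = 1/n$, so $b_n = \frac{1}{n(1-q_0^n)}$, which are exactly the coefficients of $f$. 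Hence $\log F = f$ and $F = \exp(f)$ as elements of $\mathbb{C}[[z]]$.

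Since equal formal power series have identical Taylor coefficients, $F$ and $e^{f}$ have the same radius of convergence, giving $R = \rho$. For $|z| < \rho = R$ the series $f(z)$ converges, $e^{f(z)}$ is analytic, and by uniqueness of Taylor expansions $F(z) = e^{f(z)}$. The only subtlety is justifying the manipulations with $\log$ in the second step, but this is standard once one notes that $\exp \colon z\mathbb{C}[[z]] \to 1 + z\mathbb{C}[[z]]$ is a bijection with inverse $\log$ and that both sides of the functional equation have constant term $1$.
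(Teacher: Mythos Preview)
Your formal power series argument correctly establishes $F = e^f$ in $\mathbb{C}[[z]]$, and from this the inequality $R \geq \rho$ does follow: wherever the series for $f$ converges, $e^{f(z)}$ is analytic and its Taylor series (which coincides with $F$) must converge there. The identity $F(z) = e^{f(z)}$ for $|z| < \rho$ is then immediate.

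The gap is in the reverse inequality $R \leq \rho$. Your sentence ``Since equal formal power series have identical Taylor coefficients, $F$ and $e^{f}$ have the same radius of convergence, giving $R = \rho$'' conflates two different objects: the radius of convergence of the formal series $e^f$ is by definition $R$, but there is no general principle forcing this to equal $\rho$. Indeed, for $f(z) = \log(1-z) = -\sum_{n\geq 1} z^n/n$ one has $\rho = 1$ while $e^f = 1-z$ has $R = \infty$. Equivalently, from the formal identity $f = \log F$ one would like to say that $f$ is analytic wherever $F$ is, but this requires $F$ to be nonvanishing on $D(0,R)$, which is not known a priori.

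The paper treats exactly this obstruction. After proving $R \geq \rho$ (via a limiting argument from the $q$-binomial identity with $|q|<1$), it assumes $R > \rho$ and factors out the zeros of $F$ on the circle $|z| = \rho$, writing $F = G \cdot \prod (1 - z/(u_j\rho))^{p_j}$ with $G$ zero-free on a slightly larger disc. Taking logarithms and comparing coefficients with $f$ produces the relation $\rho^n/(1-q_0^n) = n\rho^n g_n - \sum_j p_j u_j^{-n}$, and a growth estimate on the right forces $\rho^{2n}/\sin^2 n\pi x = O(1)$; summing then contradicts a uniform distribution estimate (Lemma~\ref{axC}). This analytic continuation step is the genuine content of the theorem, and your proposal does not supply a substitute for it.
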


The functions $f:D(0,\rho) \to \mathbb{C}$ and $F:D(0,R) \to \mathbb{C}$ are analytic \cite[p.~69, Theorem 2.16]{titchmarsh}.

Using the Cauchy-Hadamard formula we have
\[
\rho = \liminf_{n \to \infty} n^{1/n} |1-q_0^n|^{1/n}
\leq \liminf_{n \to \infty} (2n)^{1/n} = 1
\]
and
\begin{equation}
R = \liminf_{n \to \infty} (|1-q_0| |1-q_0^2| \cdots |1-q_0^n|)^{1/n}
\leq 2\rho.
\label{R2rho}
\end{equation}

\begin{lemma}
For $|u| = 1$ and for $0 \leq r < 1$,
\[
\left| \frac{1-u}{1-ru} \right| \leq 2.
\]
\label{unimodular}
\end{lemma}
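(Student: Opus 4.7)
The plan is to apply the triangle inequality in two complementary ways: once to bound the numerator $|1-u|$ from above in terms of $|1-ru|$, and once (in its reverse form) to bound the denominator $|1-ru|$ from below.

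First I would write $1 - u = (1-ru) + (ru - u)$ and apply the triangle inequality to get
\[
|1-u| \;\leq\; |1-ru| + |ru-u| \;=\; |1-ru| + (1-r)|u| \;=\; |1-ru| + (1-r),
\]
using $|u|=1$ and $0\leq r<1$. Second, by the reverse triangle inequality,
\[
|1-ru| \;\geq\; 1 - r|u| \;=\; 1-r.
\]
Dividing the first inequality by $|1-ru|$ and applying the second gives
\[
\left|\frac{1-u}{1-ru}\right| \;\leq\; 1 + \frac{1-r}{|1-ru|} \;\leq\; 1 + \frac{1-r}{1-r} \;=\; 2,
\]
which is the claim. (Note that when $r=1$ both sides of the second bound collapse; but the hypothesis $r<1$ ensures the denominator $1-r$ is strictly positive, and in any case $|1-ru|=|1-u|$ when $r=1$, giving the trivial bound.)

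I do not anticipate any real obstacle here: the bound is essentially sharp (take $r\to 0$ and $u=-1$ to make the ratio approach $2$), and the two-step triangle-inequality argument is the natural tool. The only thing to be careful about is writing the decomposition $1-u = (1-ru)+(ru-u)$ in the form that isolates the $u$-factor so that $|u|=1$ can be used to extract $(1-r)$ cleanly.
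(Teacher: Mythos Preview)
Your proof is correct and follows essentially the same route as the paper: the first step (triangle inequality applied to $1-u=(1-ru)+(ru-u)$) is identical, and for the second step you use the reverse triangle inequality $|1-ru|\geq 1-r$ where the paper instead observes $1-r\leq 1-r\,\Re u=\Re(1-ru)\leq|1-ru|$, which amounts to the same bound.
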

\begin{proof}
\[
|1-u| \leq |1-ru| + |ru-u| = |1-ru| + 1-r.
\]
Because $\Re u \leq 1$ we have $1-r \leq 1-r \Re u = \Re(1-ru) \leq |1-ru|$. 
Hence $|1-u| \leq 2|1-ru|$, from which the claim follows.
\end{proof}

We assert the following as a common fact in complex analysis.

\begin{lemma}
For $|w|<1$ define
\[
L(w) = \sum_{n=1}^\infty \frac{w^n}{n}.
\]
If $|w|<1$, then $e^{L(w)} = (1-w)^{-1}$. 
\label{logarithmlemma}
\end{lemma}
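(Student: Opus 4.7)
The plan is to introduce the auxiliary analytic function $g(w) = (1-w)e^{L(w)}$ on the open unit disc $D(0,1)$ and prove that $g$ is identically equal to $1$. Since $1-w$ does not vanish on $D(0,1)$, the lemma then follows by dividing.

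First I would note that $L$ is analytic on $D(0,1)$: its defining power series has radius of convergence $1$ by the Cauchy-Hadamard formula \eqref{cauchyhadamard}, since $n^{1/n}\to 1$. Inside the disc of convergence, a power series may be differentiated term by term, so
\[
L'(w) = \sum_{n=1}^\infty w^{n-1} = \frac{1}{1-w}, \qquad |w|<1,
\]
where the last equality is the geometric series. Because $\exp$ is entire, $e^{L(w)}$ is analytic on $D(0,1)$ with derivative $L'(w)e^{L(w)}$ by the chain rule.

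Applying the product rule to $g$,
\[
g'(w) = -e^{L(w)} + (1-w)\cdot\frac{1}{1-w}\cdot e^{L(w)} = 0
\]
for every $w \in D(0,1)$. Because $D(0,1)$ is connected, $g$ is constant on $D(0,1)$, and evaluating at $w=0$ gives $g(0) = (1-0)\cdot e^{0} = 1$. Therefore $(1-w)e^{L(w)} = 1$ throughout $D(0,1)$, which is exactly $e^{L(w)} = (1-w)^{-1}$.

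The proof has no real obstacle; the only point warranting mention is the termwise differentiation of $L$, which is one of the basic properties of power series within their disc of convergence. An alternative would be to identify $L$ as a branch of $-\log(1-w)$ via the power series expansion of the logarithm, but the argument via $g' = 0$ has the advantage of being self-contained and requiring no appeal to the theory of complex logarithms.
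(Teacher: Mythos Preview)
Your proof is correct. The paper actually gives no proof of this lemma at all, stating only ``We assert the following as a common fact in complex analysis.'' Your argument via $g(w)=(1-w)e^{L(w)}$, $g'(w)=0$, $g(0)=1$ is a standard and entirely self-contained way to establish the identity, so you have supplied what the paper omitted.
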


For $|q|, |z|<1$, we define
\[
f(z,q) = \sum_{n=1}^\infty \frac{z^n}{n(1-q^n)},\qquad F(z,q) = 1 + \sum_{n=1}^\infty \frac{z^n}{(1-q)(1-q^2)\cdots (1-q^n)}.
\]
For $|q|<1$, define $c_0(q)=0$ and for $n \geq 1$,
\[
c_n(q)=\frac{1}{n(1-q^n)},
\]
and thus for $|z|<1$,
\[
f(z,q) = \sum_{n=0}^\infty c_n(q) z^n.
\]
Furthermore define $\gamma_0=0$ and for $n \geq 1$,
\[
\gamma_n = \frac{1}{n(1-q_0^n)},
\]
and thus for $|z|<\rho$,
\[
f(z) = \sum_{n=0}^\infty \gamma_n z^n.
\]
For $|q|<1$, define $C_0(q)=1$ and for $n \geq 1$,
\[
C_n(q) = \frac{1}{(1-q)(1-q^2) \cdots (1-q^n)},
\]
and thus for $|z|<1$,
\[
F(z,q) = \sum_{n=0}^\infty C_n(q) z^n.
\]
Furthermore define $\Gamma_0=1$ and for $n \geq 1$,
\[
\Gamma_n = \frac{1}{(1-q_0)  (1-q_0^2) \cdots (1-q_0^n)},
\]
and thus for $|z|<R$,
\[
F(z) = \sum_{n=0}^\infty \Gamma_n z^n. 
\]

We prove directly the following, which is an instance of the \textbf{$q$-binomial formula}  \cite[p.~17, Theorem 2.1]{andrews}.

\begin{proposition}
If $|q|, |z|<1$ then
\[
F(z,q) = e^{f(z,q)}.
\]
\label{qbinomial}
\end{proposition}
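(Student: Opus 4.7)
The plan is to show that $F(z,q)$ and $e^{f(z,q)}$ both satisfy the $q$-difference equation $(1-z)G(z) = G(qz)$ with $G(0)=1$, and then invoke a uniqueness argument for analytic solutions near the origin. Both sides are analytic on $|z|<1$: from $|1-q^k| \geq 1-|q|^k$ one gets $|C_n(q)| \leq \prod_{k=1}^\infty (1-|q|^k)^{-1} < \infty$ and $|c_n(q)| \leq (n(1-|q|))^{-1}$, which force the radii of convergence of both $F(\,\cdot\,,q)$ and $f(\,\cdot\,,q)$ to be at least $1$.

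For $F$ the functional equation reduces to a coefficient comparison in $(1-z)F(z,q) = F(qz,q)$: the $z^n$ coefficient on the left is $C_n(q) - C_{n-1}(q)$ and on the right is $C_n(q) q^n$, and these agree by the defining recurrence $C_n(q)(1-q^n) = C_{n-1}(q)$. For $f$ the analogous identity is the telescoping calculation
\[
f(z,q) - f(qz,q) = \sum_{n=1}^\infty \frac{(1-q^n)z^n}{n(1-q^n)} = \sum_{n=1}^\infty \frac{z^n}{n},
\]
and Lemma \ref{logarithmlemma} identifies the right-hand side with $-\log(1-z)$, so exponentiating gives $e^{f(z,q)} = (1-z)^{-1} e^{f(qz,q)}$.

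For the uniqueness step, I would set $H(z) = e^{f(z,q)}/F(z,q)$, which is analytic on a neighborhood of $0$ since $F(0,q)=1$, with $H(0)=1$ and $H(z) = H(qz)$ there. Writing $H(z) = \sum_{n \geq 0} b_n z^n$ and matching coefficients yields $b_n(1-q^n) = 0$ for every $n$; since $|q|<1$ forces $q^n \neq 1$ for $n \geq 1$, all $b_n$ with $n \geq 1$ vanish, so $H \equiv 1$ on this neighborhood, and analytic continuation gives $F(z,q) = e^{f(z,q)}$ on the whole disc $|z|<1$. The main obstacle is essentially bookkeeping: justifying the termwise subtraction in the formula for $f(z,q)-f(qz,q)$ (which is legitimate by absolute convergence on $|z|<1$) and pinning down the common domain on which the functional equations and the quotient $H$ make sense, which is handled by noting that $|q|<1$ ensures $|qz| < |z| < 1$, so the functional equations relate values strictly inside the disc of analyticity.
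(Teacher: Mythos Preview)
Your argument is correct, and it is close in spirit to the paper's proof but organized differently. The paper first rewrites $f(z,q)=\sum_{m\ge 0}L(zq^m)$ and exponentiates to obtain the infinite product $G(z,q)=\prod_{m\ge 0}(1-zq^m)^{-1}$ as an intermediate object; it then derives the functional equation $G(qz,q)=(1-z)G(z,q)$ for this product, reads off the recursion $g_n(q)=(1-q^n)^{-1}g_{n-1}(q)$, and thereby identifies $g_n(q)$ with $C_n(q)$. You bypass the infinite product entirely: you verify the same functional equation $(1-z)G(z)=G(qz)$ directly for each of $F$ and $e^{f}$, and then run a uniqueness argument on the quotient $H$. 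Both routes hinge on Lemma~\ref{logarithmlemma} and on the $q$-difference equation, so the underlying mechanism is the same; what your version buys is that it avoids having to justify the convergence and analyticity of the infinite product, at the cost of an extra (but easy) step checking that $H(z)=H(qz)$ with $H(0)=1$ forces $H\equiv 1$. The paper's version, conversely, makes the Euler-product identity $e^{f(z,q)}=\prod_{m\ge 0}(1-zq^m)^{-1}$ visible, which is of independent interest.
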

\begin{proof}
By Lemma \ref{logarithmlemma},
\[
f(z,q)=\sum_{n=1}^\infty \frac{z^n}{n} \left( \sum_{m=0}^\infty q^{nm}\right)
=\sum_{m=0}^\infty \left( \sum_{n=1}^\infty \frac{(zq^m)^n}{n} \right)
=\sum_{m=0}^\infty L(zq^m).
\]
Because $e^{L(w)}=(1-w)^{-1}$ for $|w|<1$,
\[
e^{f(z,q)} = \prod_{m=0}^\infty e^{L(zq^m)}
=\prod_{m=0}^\infty (1-zq^m)^{-1}.
\]
Define
\[
G(z,q) = \prod_{m=0}^\infty (1-zq^m)^{-1} = \sum_{n=0}^\infty g_n(q) z^n.
\]
On the one hand, $g_0(q) = G(0,q) = 1$. On the other hand,
\[
G(qz,q) = (1-z) G(z,q),
\]
thus
\[
\sum_{n=0}^\infty g_n(q) z^{n+1} = \sum_{n=0}^\infty g_n(q)(1-q^n) z^n,
\]
and therefore for $n \geq 1$ we have 
$g_n(q) = (1-q^n)^{-1} g_{n-1}(q)$. 
Thus by induction, for $n \geq 1$,
\[
g_n(q) = \frac{1}{(1-q)(1-q^2)\cdots (1-q^n)}.
\]
Hence
\[
e^{f(z,q)} = 1 + \sum_{n=1}^\infty  \frac{z^n}{(1-q)(1-q^2)\cdots (1-q^n)} = F(z,q).
\]
\end{proof}

\begin{proposition}
$R \geq \rho$, and if $|z| < \rho$ then $e^{f(z)} = F(z)$. 
\label{Rgeqrho}
\end{proposition}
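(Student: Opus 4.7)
The plan is to approach $q_0$ from inside the unit disc by setting $q=rq_0$ with $r\in(0,1)$, so that $|q|<1$ and Proposition \ref{qbinomial} gives $F(z,rq_0)=e^{f(z,rq_0)}$ on $|z|<1$. The strategy is to let $r\to 1^-$ and pass both the factorization and the coefficients to the limit, using Lemma \ref{unimodular} to control $|1-r^n q_0^n|^{-1}$ uniformly in $r$ by $2|1-q_0^n|^{-1}$.

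Fix $s$ with $0<s<\rho$. Applying Lemma \ref{unimodular} with $u=q_0^n$ (which has $|u|=1$) and $r'=r^n\in[0,1)$ yields
\[
|c_n(rq_0)|=\frac{1}{n|1-r^nq_0^n|}\leq \frac{2}{n|1-q_0^n|}=2|\gamma_n|.
\]
Since $s<\rho$, the series $M_1:=\sum_{n=1}^\infty 2|\gamma_n|s^n$ converges, giving the uniform bound $|f(z,rq_0)|\leq M_1$ for all $|z|\leq s$ and $r\in(0,1)$. By dominated convergence applied termwise (with dominator $2|\gamma_n|s^n$) and since $c_n(rq_0)\to\gamma_n$ as $r\to 1^-$, one gets $f(z,rq_0)\to f(z)$ uniformly on $|z|\leq s$, hence $e^{f(z,rq_0)}\to e^{f(z)}$ uniformly there.

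Now by Proposition \ref{qbinomial}, $|F(z,rq_0)|=|e^{f(z,rq_0)}|\leq e^{M_1}$ on $|z|\leq s$, uniformly in $r\in(0,1)$. Since $F(z,rq_0)=\sum_{n\geq 0}C_n(rq_0)z^n$ is analytic in the disc $|z|<r^{-1}$ (which contains $|z|\leq s$ for $r$ near $1$), Cauchy's coefficient estimate on the circle $|z|=s$ gives $|C_n(rq_0)|\leq e^{M_1}/s^n$ for all $r$ sufficiently close to $1$. Because $C_n(rq_0)\to \Gamma_n$ as $r\to 1^-$ (clear from the explicit product formula), we obtain $|\Gamma_n|\leq e^{M_1}/s^n$. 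By the Cauchy-Hadamard formula this forces $R\geq s$, and since $s<\rho$ was arbitrary, $R\geq \rho$.

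For the identity $F(z)=e^{f(z)}$ on $|z|<\rho$, fix $z_0$ with $|z_0|<\rho$ and pick $s$ with $|z_0|<s<\rho$. The estimate $|C_n(rq_0)z_0^n|\leq e^{M_1}(|z_0|/s)^n$ is a summable dominator independent of $r$, so dominated convergence yields $F(z_0,rq_0)=\sum_n C_n(rq_0)z_0^n\to \sum_n \Gamma_n z_0^n=F(z_0)$ as $r\to 1^-$. Combined with $e^{f(z_0,rq_0)}\to e^{f(z_0)}$ and the identity $F(z_0,rq_0)=e^{f(z_0,rq_0)}$ from Proposition \ref{qbinomial}, this gives $F(z_0)=e^{f(z_0)}$. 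The main obstacle, and the point that Lemma \ref{unimodular} is tailored to, is securing a bound on $|1-r^nq_0^n|^{-1}$ that is uniform in $r\in(0,1)$ but sharp enough that summation against $|z|^n$ still converges for $|z|<\rho$; without this, one could not legitimately interchange the limit $r\to 1^-$ with the infinite sums defining $f$ and $F$.
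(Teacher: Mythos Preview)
Your proof is correct and follows essentially the same approach as the paper: bound $|c_n(rq_0)|\le 2|\gamma_n|$ via Lemma~\ref{unimodular} to get a uniform bound on $|f(z,rq_0)|$ for $|z|\le s<\rho$, transfer this to $|F(z,rq_0)|$ via Proposition~\ref{qbinomial}, extract a coefficient bound $|C_n(rq_0)|\le e^{M_1}/s^n$, pass to the limit $r\to 1^-$, and then justify the limiting identity by dominated convergence. The paper's version differs only cosmetically, phrasing the coefficient extraction as a Fourier-coefficient computation and the limit interchange as the Weierstrass $M$-test rather than dominated convergence.
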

\begin{proof}
If $\rho=0$ then the claim is immediate. Otherwise, $0 < \rho \leq 1$. Let
 $0<t<\rho$, $0<r<1$, and  define $G(\theta) = F(te^{i\theta},rq_0)$.
On the one hand,
\[
G(\theta) = \sum_{n=0}^\infty C_n(rq_0) (te^{i\theta})^n = \sum_{n=0}^\infty C_n(rq_0) t^n e^{in\theta},
\]
which implies that $\widehat{G}(n)=C_n(rq_0) t^n$ for $n \geq 0$ and $\widehat{G}(n)=0$ for $n<0$. 
On the other hand,
for $n \in \mathbb{Z}$, using 
Proposition \ref{qbinomial},
\begin{align*}
\widehat{G}(n) &= \frac{1}{2\pi} \int_0^{2\pi} G(\theta) e^{-in\theta} d\theta\\
&=\frac{1}{2\pi} \int_0^{2\pi} F(te^{i\theta},rq_0) e^{-in\theta} d\theta\\
&=\frac{1}{2\pi} \int_0^{2\pi} e^{f(te^{i\theta},rq_0)} e^{-in\theta} d\theta.
\end{align*}
By Lemma \ref{unimodular},
\[
|f(te^{i\theta},rq_0)| \leq \sum_{n=1}^\infty \frac{t^n}{n|1-r^n q_0^n|}
\leq 2 \sum_{n=1}^\infty \frac{t^n}{n|1-q_0^n|} = M(t),
\]
and because
$t<\rho$ it is the case that $M(t)<\infty$. 
Then for $n \geq 0$,
\[
|C_n(rq_0) t^n| = |\widehat{G}(n)| \leq  \frac{1}{2\pi} \int_0^{2\pi} |e^{f(te^{i\theta},rq_0)}| d\theta
\leq e^{M(t)}.
\]
$C_n(rq_0) \to \Gamma_n$ as $r \to 1$, and therefore
\[
|\Gamma_n t^n| \leq e^{M(t)},\qquad 0<t<\rho,\qquad n \geq 0.
\]
Now fix $|z|<\rho$, take $t$ such that $|z|<t<\rho$, and write $0 \leq \delta=\frac{|z|}{t}<1$
and $M_n=e^{M(t)} \delta^n$.
Because $|\Gamma_n t^n| \leq e^{M(t)}$,
\[
\sum_{n=0}^\infty |\Gamma_n z^n| = \sum_{n=0}^\infty  \delta^n |\Gamma_n t^n| \leq \sum_{n=0}^\infty  M_n
=\frac{e^{M(t)}}{1-\delta}<\infty,
\]
which implies that $|z| \leq R$. Therefore $R \geq \rho$.
Furthermore, because
\[
|C_n(rq_0) z^n| = \delta^n |C_n(rq_0| t^n| = M_n,\qquad r \in (0,1),
\]
by the Weierstrass $M$-test \cite[p.~148, Theorem 7.10]{rudin}, the sequence $\sum_{n=0}^N C_n(rq_0) z^n$ converges uniformly for $r \in (0,1)$ and therefore \cite[p.~149, Theorem 7.11]{rudin}
\begin{align*}
\lim_{r \to 1} F(z,rq_0)&=\lim_{r \to 1} \lim_{N \to \infty} \sum_{n=0}^N C_n(rq_0)z^n\\
&=\lim_{N \to \infty} \lim_{r \to 1}  \sum_{n=0}^N C_n(rq_0)z^n\\
&=\lim_{N \to \infty} \sum_{n=0}^N \Gamma_n z^n\\
&=F(z).
\end{align*}
Now, for $0<r<1$, by Lemma \ref{unimodular},
\[
|c_n(rq_0)z^n| = \left| \frac{z^n}{n(1-r^n q_0^n)} \right|
\leq 2 \left| \frac{z^n}{n(1-q_0^n)} \right|
=m_n.
\]
Because $|z|<\rho$, the series $\sum_{n=0}^\infty m_n$ converges, and therefore by the Weierstrass $M$-test, 
the sequence $\sum_{n=0}^N c_n(rq_0) z^n$ converges uniformly for $r \in (0,1)$. Then
\begin{align*}
\lim_{r \to 1} f(z,rq_0)&=\lim_{r \to 1} \lim_{N \to \infty} \sum_{n=0}^N c_n(rq_0)z^n\\
&=\lim_{N \to \infty} \lim_{r \to 1}  \sum_{n=0}^N c_n(rq_0)z^n\\
&=\lim_{N \to \infty} \sum_{n=0}^N \gamma_n z^n\\
&=f(z).
\end{align*}
Then using Lemma \ref{qbinomial},
\begin{align*}
\exp(f(z))&=\exp\left( \lim_{r \to 1} f(z,rq_0) \right)\\
&=\lim_{r \to 1} \exp(f(z,rq_0))\\
&=\lim_{r \to 1} F(z,rq_0)\\
&=F(z),
\end{align*}
completing the proof.
\end{proof}

\begin{lemma}
If $|u|=1$ and $u \neq 1$ then
\[
\left| \sum_{n=1}^N u^n \right|  \leq \frac{2}{|1-u|}.
\]
\label{geometricbound}
\end{lemma}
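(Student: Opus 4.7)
The plan is to evaluate the geometric sum explicitly and then bound the numerator using the triangle inequality. Since $u \neq 1$, the standard geometric series formula gives
\[
\sum_{n=1}^N u^n = u \cdot \frac{1 - u^N}{1-u},
\]
and taking absolute values produces $\left|\sum_{n=1}^N u^n\right| = \frac{|u|\,|1-u^N|}{|1-u|}$.

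Next, I would use $|u|=1$, which gives $|u|=1$ in the numerator and $|u^N|=1$, so by the triangle inequality $|1-u^N| \leq 1 + |u^N| = 2$. Combining these yields the claimed bound $\left|\sum_{n=1}^N u^n\right| \leq \frac{2}{|1-u|}$.

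There is no real obstacle here; the lemma is an immediate consequence of the closed form for a finite geometric series together with the triangle inequality, and the hypothesis $u \neq 1$ is needed only to ensure the denominator does not vanish. The purpose of the lemma in context is presumably to serve as a uniform (in $N$) bound on partial sums of $\sum u^n$, to be combined later with Abel summation or a Dirichlet-test style argument in the subsequent development surrounding $f(z) = \sum z^n / (n(1-q_0^n))$.
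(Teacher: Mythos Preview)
Your proof is correct and is essentially identical to the paper's: the paper writes $\left|\sum_{n=1}^N u^n\right| = \left|\frac{u-u^{N+1}}{1-u}\right| = \left|\frac{1-u^N}{1-u}\right| \leq \frac{2}{|1-u|}$, which is exactly your geometric-sum-plus-triangle-inequality argument with the factor of $|u|=1$ absorbed silently.
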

\begin{proof}
\[
\left| \sum_{n=1}^N u^n \right| = \left| \frac{u-u^{N+1}}{1-u} \right| = \left| \frac{1-u^N}{1-u} \right| \leq \frac{2}{|1-u|}.
\]
\end{proof}

\begin{lemma}
Let $0<a<1$, let $x \in \Omega$, and suppose that there is some $C$ such that
$\frac{a^n}{|\sin n\pi x|} \leq C$ for all $n$. Then
\[
\sum_{n=1}^N \frac{a^{2n}}{\sin^2 n \pi x} = o(N),\qquad N \to \infty.
\]
\label{axC}
\end{lemma}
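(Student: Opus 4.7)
The plan is to split the sum according to whether $|\sin n\pi x|$ is small or large, measuring ``small'' against a threshold $\delta>0$ to be sent to $0$ at the end. Fix $\epsilon>0$. For each $N$, partition $\{1,\ldots,N\}$ into $\mathcal{L}_N(\delta)=\{n\le N:|\sin n\pi x|\ge\delta\}$ and $\mathcal{S}_N(\delta)=\{n\le N:|\sin n\pi x|<\delta\}$, and write the sum accordingly.

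On $\mathcal{L}_N(\delta)$, each term is at most $a^{2n}/\delta^2$, so
\[
\sum_{n\in\mathcal{L}_N(\delta)} \frac{a^{2n}}{\sin^2 n\pi x} \le \frac{1}{\delta^2}\sum_{n=1}^\infty a^{2n}=\frac{a^2}{\delta^2(1-a^2)},
\]
which is a constant independent of $N$. On $\mathcal{S}_N(\delta)$ I will use the hypothesis $a^n/|\sin n\pi x|\le C$ directly: each term is bounded by $C^2$, so
\[
\sum_{n\in\mathcal{S}_N(\delta)} \frac{a^{2n}}{\sin^2 n\pi x} \le C^2 \cdot |\mathcal{S}_N(\delta)|.
\]
The essential task is therefore to control $|\mathcal{S}_N(\delta)|$.

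Here I invoke Theorem \ref{uniformtheorem}: since $x\in\Omega$, the sequence $(nx)$ is uniformly distributed modulo $1$. The set $E_\delta=\{t\in[0,1):|\sin\pi t|<\delta\}=[0,\tfrac{1}{\pi}\arcsin\delta)\cup(1-\tfrac{1}{\pi}\arcsin\delta,1)$ has Lebesgue measure $\tfrac{2}{\pi}\arcsin\delta$, and its indicator function $\mathbf{1}_{E_\delta}$ is Riemann integrable on $[0,1]$ because it is discontinuous at only two points. Noting that $|\sin n\pi x|=|\sin(\pi R(nx))|$, the Weyl-type characterization \eqref{riemann} gives
\[
\lim_{N\to\infty}\frac{|\mathcal{S}_N(\delta)|}{N}=\lim_{N\to\infty}\frac{1}{N}\sum_{n=1}^N \mathbf{1}_{E_\delta}(R(nx))=\frac{2}{\pi}\arcsin\delta.
\]

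Combining these pieces, for every $\delta>0$,
\[
\limsup_{N\to\infty}\frac{1}{N}\sum_{n=1}^N \frac{a^{2n}}{\sin^2 n\pi x}\le \frac{2C^2}{\pi}\arcsin\delta.
\]
Since $\arcsin\delta\to 0$ as $\delta\to 0^+$, this $\limsup$ is $0$, which is the claim. The main step that requires care is the application of uniform distribution to the discontinuous indicator $\mathbf{1}_{E_\delta}$; Riemann integrability of the indicator (so that \eqref{riemann} applies) is exactly what makes the ``small sine'' count go to the correct limit, and no further Diophantine information about $x$ beyond irrationality is needed.
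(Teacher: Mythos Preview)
Your proof is correct and follows essentially the same approach as the paper: split according to whether the sine term is small or large, bound the large-sine contribution by a convergent geometric series, bound the small-sine terms by $C^2$ each, and control their count via uniform distribution of $(nx)$. The only cosmetic difference is that the paper thresholds on $\norm{nx}<\epsilon$ rather than on $|\sin n\pi x|<\delta$, obtaining density $2\epsilon$ in place of your $\tfrac{2}{\pi}\arcsin\delta$; also, your opening ``Fix $\epsilon>0$'' is never used and can be deleted.
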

\begin{proof}
Take $0<\epsilon<\frac{1}{2}$ and let
\[
E_N = \{n: 1 \leq n \leq N, \norm{nx}<\epsilon\}.
\]
Thus if $1 \leq n \leq N$ and $n \not \in E_N$ then $\norm{nx} \geq \epsilon$. 
Write $S_N=\sum_{n=1}^N \frac{a^{2n}}{\sin^2 n \pi x}$.
Because $|\sin n\pi x| =\sin(\pi \norm{nx}) \geq 2\norm{nx}$,
\begin{align*}
S_N&=\sum_{n \in E_N} \frac{a^{2n}}{\sin^2 n\pi x} + \sum_{n \not \in E_N, 1 \leq n \leq N} 
\frac{a^{2n}}{\sin^2 n \pi x}\\
&\leq C^2 |E_N| + \sum_{n \not \in E_N, 1 \leq n \leq N} \frac{a^{2n}}{4 \epsilon^2}\\
&\leq C^2 |E_N| + \frac{1}{4\epsilon^2(1-a^2)}. 
\end{align*}
Because $x$ is irrational, by Theorem \ref{uniformtheorem} the sequence $nx$ is uniformly distributed modulo $1$.
Therefore
\[
\frac{|E_N|}{N} \to 2\epsilon,\qquad N \to \infty,
\] 
and this implies
\[
\limsup_{N \to \infty} \frac{S_N}{N} \leq 2\epsilon \cdot C^2.
\]
Because this is true for each $0<\epsilon<\frac{1}{2}$ it follows that $\lim_{N \to \infty} \frac{S_N}{N} = 0$, proving the claim.
\end{proof}

\begin{proposition}
$R \leq \rho$.
\label{Rleqrho}
\end{proposition}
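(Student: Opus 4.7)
The strategy is analytic continuation: I would show that $F$ has no zeros on $|z|<R$, so that the branch of $\log F$ vanishing at $0$ gives an analytic continuation of $f$ from $|z|<\rho$ to the entire disc $|z|<R$, which forces $\rho\geq R$. The case $R=0$ is vacuous (since then Proposition \ref{Rgeqrho} gives $\rho=0$ as well), so assume $R>0$.

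The first step is to establish the functional equation
\[
(1-z)F(z)=F(q_0 z)
\]
as an identity of formal power series, hence of analytic functions on $|z|<R$. The coefficient of $z^n$ in $(1-z)F(z)$ is $\Gamma_n-\Gamma_{n-1}$, and the recurrence $\Gamma_{n-1}=(1-q_0^n)\Gamma_n$ gives $\Gamma_n-\Gamma_{n-1}=q_0^n\Gamma_n$, which matches the coefficient of $z^n$ in $F(q_0 z)$. The main step is then to show, using this identity, that $F$ is zero-free on $|z|<R$. Suppose to the contrary $F(z_0)=0$ with $|z_0|<R$. Since $F(0)=1$, we have $z_0\neq 0$. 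Iterating the functional equation at $z_0,q_0z_0,q_0^2z_0,\dots$ yields $F(q_0^n z_0)=0$ for every $n\geq 0$. Because $x\in\Omega$, Theorem \ref{uniformtheorem} shows that $(q_0^n)=(e^{2\pi i nx})$ is dense on the unit circle, so $\{q_0^nz_0\}$ is dense on the circle $|z|=|z_0|$. Since $|z_0|<R$, $F$ is continuous on a neighborhood of that circle; vanishing on a dense subset forces $F\equiv 0$ on $|z|=|z_0|$, and the identity theorem then forces $F\equiv 0$ on $|z|<R$, contradicting $F(0)=1$.

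Because $F$ is zero-free on the simply connected disc $|z|<R$, there is a unique analytic branch $g$ of $\log F$ there with $g(0)=0$. By Proposition \ref{Rgeqrho}, $F=e^f$ on $|z|<\rho$, and since $f(0)=0=g(0)$, continuity of branches gives $g=f$ on $|z|<\rho$. Hence $g$ extends $f$ analytically to the larger disc $|z|<R$, so the power series for $f$ at $0$ in fact converges on $|z|<R$, giving $\rho\geq R$, as desired. The conceptual heart of the argument is the zero-free step: the functional equation together with density of $\{q_0^n\}$ on the unit circle makes even a single zero of $F$ propagate to a dense subset of a whole circle inside $|z|<R$, which is incompatible with analyticity.
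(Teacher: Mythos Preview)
Your argument is correct and considerably simpler than the paper's. One small point: after disposing of $R=0$ you assume $R>0$ and later invoke ``$F=e^{f}$ on $|z|<\rho$'' to identify $g$ with $f$; this needs $\rho>0$, which you have not justified. The paper records the easy inequality $R\leq 2\rho$ (equation \eqref{R2rho}), so $R>0$ immediately gives $\rho\geq R/2>0$; cite that and the gap closes. (Alternatively, one can argue purely at the level of formal power series that the Taylor coefficients of $g$ coincide with the $\gamma_n$, since $e^{f}=F$ holds formally by letting $q\to q_0$ in Proposition \ref{qbinomial}.)

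Your route and the paper's are genuinely different. The paper does \emph{not} use the functional equation $(1-z)F(z)=F(q_0 z)$ at all; instead it argues by contradiction, factoring out the (finitely many) zeros of $F$ on the circle $|z|=\rho$, writing $f(z)=g(z)-\sum_j p_j L(z/u_j\rho)$ with $g$ analytic on a disc of radius $>\rho$, and then comparing Taylor coefficients to obtain $\dfrac{\rho^{n}}{1-q_0^{n}}=O(1)$. From this it derives both $\sum_{n\leq N}\rho^{2n}\sin^{-2}(n\pi x)=o(N)$ (via Lemma \ref{axC}) and, by squaring and summing the coefficient identity, $\sum_{n\leq N}\rho^{2n}\sin^{-2}(n\pi x)=4NP+O(1)$ with $P=\sum p_j^2>0$, a contradiction. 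Your approach short-circuits all of this: the functional equation together with the density of $\{q_0^{n}\}$ on the unit circle (irrationality of $x$) forces any zero of $F$ in $|z|<R$ to accumulate inside the disc, contradicting $F(0)=1$; hence $F$ is globally zero-free on $|z|<R$, its logarithm extends $f$, and $\rho\geq R$ drops out immediately. The paper's method is more ``hands-on'' with coefficients and makes explicit the quantitative control on $\rho^{n}/|1-q_0^{n}|$, which is of independent interest; your method is cleaner and shows that the only input really needed is the $q$-difference equation satisfied by $F$.
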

\begin{proof}
We have found in \eqref{R2rho} that  $R \leq 2\rho$, i.e. $\rho \geq \frac{R}{2}$.
Assume by contradiction that $R>\rho$; in particular $R>0$, which implies $\rho \geq \frac{R}{2} > 0$. 
By Proposition \ref{Rgeqrho}, we have $F(z)=e^{f(z)}$ for $|z|<\rho$ and so $F(z) \neq 0$ for $z \in D(0,\rho)$. 

Let $u_1 \rho, \ldots, u_s \rho$ be the distinct zeros of $F$ on $|z|=\rho$, with respective multiplicities $p_1,\ldots,p_s$;
if there is none, take $s=0$, and use $\sum_{\emptyset}=0$ and $\prod_{\emptyset} =1$.
Define
\[
G(z) = F(z)  \prod_{j=1}^s \left(1-\frac{z}{u_j \rho}\right)^{-p_j},\qquad z \in D(0,R).
\]
Because
$G:D(0,R) \to \mathbb{C}$ is analytic and
$G(z) \neq 0$ for $z \in \overline{D(0,\rho)}$,  there is some $T$, $\rho<T \leq R$, such that 
$G(z) \neq 0$ for $z \in D(0,T)$ \cite[p.~208, Theorem 10.18]{rudincomplex}. As $D(0,T)$ is simply connected, there is an analytic function $g:D(0,T) \to \mathbb{C}$ such that
$G(z) = e^{g(z)}$ for $z \in D(0,T)$ \cite[p.~274, Theorem 13.11]{rudincomplex}. Thus
\[
F(z) = e^{g(z)}  \prod_{j=1}^s  \left(1-\frac{z}{u_j \rho}\right)^{p_j},\qquad z \in D(0,T).
\]
For $z \in D(0,\rho)$ we have $\left|\frac{z}{u_j \rho} \right| = \frac{|z|}{\rho}<1$, and then by
Lemma \ref{logarithmlemma}, $e^{L\left(\frac{z}{u_j \rho}\right)} = \left(1-\frac{z}{u_j \rho}\right)^{-1}$.
Therefore for $z \in D(0,\rho)$,
\[
e^{f(z)} = F(z) =  e^{g(z)}  \prod_{j=1}^s e^{-p_j L\left(\frac{z}{u_j \rho}\right)}
= \exp\left( g(z) - \sum_{j=1}^s p_j L\left(\frac{z}{u_j \rho}\right)\right),
\]
i.e.
\[
\exp\left(f(z)-g(z) - \sum_{j=1}^s p_j L\left(\frac{z}{u_j \rho}\right)\right)=1,\qquad z \in D(0,\rho).
\]
But the image of a continuous function $D(0,\rho) \to 2\pi i \mathbb{Z}$ is connected, and because 
$2\pi i \mathbb{Z}$ has the discrete topology
it follows that the image is a singleton,  thus
there is some $\nu \in \mathbb{Z}$ such that 
\[
f(z) - g(z)  + \sum_{j=1}^s p_j L\left(\frac{z}{u_j \rho}\right) = 2\pi i\nu,\qquad z \in D(0,\rho).
\]
But $e^{g(0)} = G(0) = F(0) \cdot 1 = 1$, so $g(0)=0$, and $f(0)=0$, hence $\nu=0$. 
Therefore
\[
f(z) = g(z) - \sum_{j=1}^s p_j L\left(\frac{z}{u_j \rho}\right),\qquad z \in D(0,\rho).
\]
Now, for $z \in D(0,\rho)$, 
\[
\sum_{j=1}^s p_j L\left(\frac{z}{u_j \rho}\right) =\sum_{j=1}^s p_j \sum_{n=1}^\infty \frac{\left(\frac{z}{u_j \rho}\right)^n}{n}
=\sum_{n=1}^\infty  \left(  \frac{1}{n} \sum_{j=1}^s p_j (u_j \rho)^{-n} \right) z^n.
\]
Let $g_n = \frac{g^{(n)}(0)}{n!}$. 
For $z \in D(0,\rho)$,
\[
\sum_{n=1}^\infty \gamma_n z^n = \sum_{n=1}^\infty g_n z^n - \sum_{n=1}^\infty  \left(  \frac{1}{n} \sum_{j=1}^s p_j (u_j \rho)^{-n} \right) z^n,
\]
so for $n \geq 1$,
\[
 \frac{1}{n(1-q_0^n)} = \gamma_n = g_n -  \frac{1}{n} \sum_{j=1}^s p_j (u_j \rho)^{-n}.
\]
Then
\begin{equation}
\frac{\rho^n}{1-q_0^n} = n \rho^n g_n - \sum_{j=1}^s p_j u_j^{-n}.
\label{pjsum}
\end{equation}
Cauchy's integral formula \cite[p.~82, Theorem 2.41]{titchmarsh} tells us that for $0<V<T$, $C(t) = V e^{it}$, $0 \leq t \leq 2\pi$,
\[
g_n =  \frac{g^{(n)}(0)}{n!} = \frac{1}{2\pi i} \int_C \frac{g(w)}{w^{n+1}} dw,
\]
whence, as the length of $C$ is $2\pi V$,
\[
|g_n| \leq \frac{1}{V^n} \max_{|w|=V} |g(w)|.
\]
Fix $\rho<V<T$, with which
\[
|n \rho^n g_n| \leq n \left(\frac{\rho}{V}\right)^n \max_{|w|=V} |g(w)|,
\] 
and for $\frac{\rho}{V} < \delta < 1$ we have
\[
|n\rho^n g_n| = O(\delta^{n}).
\] 
Using this and
\[
\left| \sum_{j=1}^s p_j u_j^{-n} \right| \leq \sum_{j=1}^s p_j = O(1),
\]
 \eqref{pjsum} yields
 \[
 \frac{\rho^n}{1-q_0^n} = O(1).
 \] 
As 
\[
\frac{\rho^n}{1-q_0^n} = \frac{\rho^n}{1-e^{2\pi inx}}
=\frac{\rho^n}{-2i e^{\pi inx} \sin \pi nx}
=\frac{i e^{-\pi inx} \rho^n}{2 \sin \pi nx},
\]
we get 
\[
\frac{\rho^{2n}}{\sin^2 \pi nx} = O(1).
\]
For any $M$, there is some $n_M$ such that $\sin^2 \pi n_M x \geq M$, and thus the above estimate is contradicted
if $\rho=1$; hence $0<\rho<1$. (We emphasize that $\rho<1$ is deduced from the assumption 
$R>\rho$, which we are showing to imply a contradiction.) 
By Lemma \ref{axC} we then get that 
\begin{equation}
\sum_{n=1}^N \frac{\rho^{2n}}{\sin^2 n\pi x} = o(N).
\label{sumoN}
\end{equation} 

Now, multiplying each side of  \eqref{pjsum} by its complex conjugate and using
that $|n\rho^n g_n| = O(\delta^n)$ and that
$\left|\sum_{j=1}^s p_j u_j^{-n} \right| = O(1)$,
\[
\frac{\rho^{2n}}{4\sin^2 \pi nx} = \left( \sum_{j=1}^s p_j u_j^{-n} \right)\left( \sum_{k=1}^s p_k u_k^{n} \right)
+O(\delta^n),
\]
i.e.
\begin{equation}
\frac{\rho^{2n}}{4\sin^2 \pi nx} = \sum_{j=1}^s p_j^2 + \sum_{j \neq k} p_j p_k (u_j^{-1} u_k)^n + O(\delta^n).
\label{pjsquared}
\end{equation}
Let $E=\{(j,k): 1 \leq j,k \leq s, j \neq k\}$ and let
$P=\sum_{j=1}^s p_j^2 > 0$.
Then summing \eqref{pjsquared} for $n=1,\ldots,N$,
using $\sum_{n=1}^N \delta^n = \delta\cdot \frac{1-\delta^N}{1-\delta} = O(1)$, 
\[
\sum_{n=1}^N \frac{\rho^{2n}}{4\sin^2 \pi nx} = NP + \sum_{(j,k) \in E} p_j p_k \left( \sum_{n=1}^N (u_j^{-1} u_k)^n \right) + O(1).
\]
Because $u_j \neq u_k$ for $(j,k) \in E$, we have according to Lemma \ref{geometricbound} that
\[
\left| \sum_{n=1}^N (u_j^{-1} u_k)^n \right| \leq \frac{2}{|1-u_j^{-1} u_k|} =O(1).
\]
Thus
\[
\sum_{n=1}^N \frac{\rho^{2n}}{\sin^2 \pi nx} = 4NP + O(1),
\]
and because $P>0$ this contradicts \eqref{sumoN}. Therefore, it is false that $R>\rho$, which means that
 $R \leq \rho$, proving the claim.
\end{proof}

\begin{theorem}
Let $x \in \Omega$ and let $\rho_1$ and $R_1$  respectively be the radii  of convergence of the power
series
\[
\sum_{n=1}^\infty \frac{z^n}{\sin n\pi x},\qquad \sum_{n=1}^\infty \frac{z^n}{\sin \pi x \cdot
\sin 2\pi x  \cdots \sin n\pi x}.
\]
Then $R_1=\frac{\rho_1}{2}$. 
\label{HLsineseries}
\end{theorem}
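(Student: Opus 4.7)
The plan is to reduce this statement to Theorem \ref{efz} by means of the Cauchy-Hadamard formula and the elementary identity
\[
1-q_0^n = 1-e^{2\pi inx} = -2i e^{\pi inx} \sin(n\pi x), \qquad |1-q_0^n| = 2|\sin n\pi x|.
\]
With $q_0 = e^{2\pi ix}$ as in Theorem \ref{efz}, this identity is the bridge linking the sine-series coefficients to the coefficients $\frac{1}{n(1-q_0^n)}$ and $\frac{1}{(1-q_0)\cdots(1-q_0^n)}$ that appear there.

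First I would apply the Cauchy-Hadamard formula \eqref{cauchyhadamard} to each of the two series in the statement. For the first series,
\[
\rho_1 = \liminf_{n \to \infty} |\sin n\pi x|^{1/n},
\]
and using $n^{1/n} \to 1$ together with the displayed identity,
\[
\rho = \liminf_{n \to \infty} \bigl(n|1-q_0^n|\bigr)^{1/n} = \liminf_{n \to \infty} \bigl(2n|\sin n\pi x|\bigr)^{1/n} = \liminf_{n \to \infty} |\sin n\pi x|^{1/n},
\]
so $\rho_1 = \rho$. For the second series,
\[
R_1 = \liminf_{n \to \infty} \bigl(|\sin \pi x|\cdot|\sin 2\pi x|\cdots |\sin n\pi x|\bigr)^{1/n},
\]
while again using the identity,
\[
R = \liminf_{n \to \infty} \bigl(|1-q_0|\cdot|1-q_0^2|\cdots |1-q_0^n|\bigr)^{1/n} = \liminf_{n \to \infty} 2\bigl(|\sin \pi x|\cdots|\sin n\pi x|\bigr)^{1/n} = 2R_1,
\]
since $2^{n/n} = 2$ is a constant factor that passes through the $\liminf$.

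Combining these two identifications, $\rho_1 = \rho$ and $R_1 = R/2$. Theorem \ref{efz} asserts $R = \rho$, so we conclude $R_1 = \rho/2 = \rho_1/2$, which is exactly the claim.

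There is no genuine obstacle here: the whole substance of the theorem is contained in Theorem \ref{efz} (whose proof via Propositions \ref{Rgeqrho} and \ref{Rleqrho} is the hard work), and the present statement is just a translation of that equality into the language of sine-series coefficients. The only care required is the bookkeeping of the factors $n^{1/n}$ and $2^{n/n}$ inside the Cauchy-Hadamard $\liminf$, both of which are trivial: the former tends to $1$ and hence drops out, while the latter is a constant $2$ that produces precisely the factor $\tfrac{1}{2}$ in $R_1 = \rho_1/2$.
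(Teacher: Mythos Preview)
Your proof is correct and is essentially identical to the paper's own proof: both use the identity $|1-q_0^n|=2|\sin n\pi x|$ together with the Cauchy--Hadamard formula to show $\rho_1=\rho$ and $R_1=R/2$, and then invoke Theorem~\ref{efz} (i.e.\ $R=\rho$) to conclude $R_1=\rho_1/2$.
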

\begin{proof}
\[
|1-q_0^n| = |1-e^{2\pi inx}| = 2 |\sin \pi nx|.
\]
By the Cauchy-Hadamard formula, 
\[
R_1 = \liminf_{n \to \infty} |\sin \pi x\cdots \sin n\pi x|^{1/n} = 
\frac{1}{2} \cdot \liminf_{n \to \infty} |(1-q_0)\cdots (1-q_0^n)|^{1/n}
=\frac{R}{2}
\]
and 
\[
\rho_1 = \liminf_{n \to \infty} |\sin n\pi x|^{1/n} = 
\rho.
\]
Theorem \ref{efz} says $R=\rho$, hence $R_1 = \frac{R}{2} = \frac{\rho}{2}=\frac{\rho_1}{2}$.  
\end{proof}

By Lemma \ref{sameradius} and Theorem \ref{roclemma}, for almost all $x$, the power series $\sum_{n=1}^\infty \frac{z^n}{\sin n\pi x}$ has radius of convergence $1$.
Then using Theorem \ref{HLsineseries}, for
almost all $x$  the power series $\sum_{n=1}^\infty \frac{z^n}{\sin \pi x \cdot
\sin 2\pi x  \cdots \sin n\pi x}$ has radius of convergence $\frac{1}{2}$, and thus for almost all $x \in \Omega$,
\[
\liminf_{n \to \infty} \left( \prod_{k=1}^n |\sin k\pi x| \right)^{1/n} = \frac{1}{2}.
\]
Hardy and Littlewood give a separate argument \cite[p.~88, Eq.~4.3]{XXIV} proving that for almost all $x \in \Omega$,
\[
\limsup_{n \to \infty} \left( \prod_{k=1}^n |\sin k\pi x| \right)^{1/n} = \frac{1}{2},
\]
and combining the formulas for the limit inferior and limit superior yields Theorem \ref{bigtheorem}.

Lubinsky \cite{MR1684685} proves more results about products of the form $\prod_{k=1}^n (1-e^{2\pi i k\theta})$. For example,
Lubinsky \cite[p.~219, Theorem 1.1]{MR1684685} proves that for all $\epsilon>0$, for almost all
$\theta \in \Omega$ we have
\[
\left|\log \left| \prod_{k=1}^n (1-e^{2\pi ik\theta})\right| \right| = O((\log n)(\log \log n)^{1+\epsilon}).
\]

The  author \cite[p.~532, Theorem 2]{MR3061031} gives asymptotic expressions for  the $L^p[0,1]$ norm
of
$\prod_{k=1}^n (1-e^{2\pi ik\theta})$ as $n \to \infty$, for $1 \leq p \leq \infty$.

For $\omega \in  \mathbb{R}$ let $P_n(\omega) = \prod_{r=1}^n |2\sin \pi r \omega|$. 
Let $F_n$ be the $n$th Fibonacci number and let
 $g = \frac{-1+\sqrt{5}}{2}$.
 Verschueren and Mestel \cite[p.~204, Theorem 2.2]{verschueren} prove that
 there is some $c = 2.407\ldots$ such that
\[
P_{F_n}(g) \to c,\qquad n \to \infty
\]
and
\[
\frac{P_{F_{n-1}}(g)}{F_n} \to \frac{c}{2\pi\sqrt{5}},\qquad n \to \infty,
\]
and that there are $C_1 \leq 0$ and $C_2 \geq 1$ such that for all $n$,
\[
n^{C_1} \leq P_n(g) \leq n^{C_2}.
\]

Let $X$ be a measure space with probability measure $\lambda$. Following  \cite[p.~21, Definition 3.6]{EMS}, we say that a measure preserving map $T:X \to X$ is \textbf{$r$-fold mixing}
if for all $g,f_1,\ldots,f_r \in L^{r+1}(X)$ we have 
\begin{equation}
\begin{split}
&\lim_{m_1 \to \infty,\ldots,m_r \to \infty} \int_X g(t) \cdot \prod_{k=1}^r f_k\left(T^{\sum_{j=1}^k m_j}(t)\right) d\lambda(t)\\
=&\left(\int_X g(t) d\lambda(t)\right) \prod_{k=1}^r \left( \int_X f_k(t) d\lambda(t) \right).
\end{split}
\label{rmixing}
\end{equation}
If for each $r$ the map $T$ is $r$-fold mixing, we say that $T$ is \textbf{mixing of all orders}.

Let $q \geq 2$ be an integer, and define $T_q:[0,1] \to [0,1]$ by 
$T_q(t)=R(qt)$. We assert that $T_q$ is mixing of all orders. This can be proved by first showing that
the dynamical system $([0,1],\mu,T_q)$ is  isomorphic to a Bernoulli shift (cf. \cite[p.~17, Example 2.8]{einsiedler}). This implies that if the Bernoulli shift is $r$-fold mixing then
$T_q$ is $r$-fold mixing. One then shows that a Bernoulli shift is mixing of all orders \cite[p.~53, Exercise 2.7.9]{einsiedler}. Using that $T_q$ is mixing of all orders gets us the following result.

\begin{theorem}
Let $q \geq 2$ be an integer. For each $n \geq 1$ we have
\[
\lim_{m \to \infty} \int_0^1 |\sin(2\pi t)| \cdot \prod_{k=1}^n \left| \sin\left(2\pi q^{km} t\right) \right|  dt
= \left( \frac{2}{\pi} \right)^{n+1}.
\]
\end{theorem}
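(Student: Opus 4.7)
The plan is to recognize the integrand as built from iterates of the doubling-type map $T_q$, so that the claim becomes a direct instance of $n$-fold mixing applied along the diagonal. Let $f(t) = |\sin(2\pi t)|$. Since $|\sin|$ has period $\pi$, for any integer $j \geq 0$ we have
\[
f(T_q^j(t)) = |\sin(2\pi R(q^j t))| = |\sin(2\pi q^j t)|,
\]
using that $2\pi q^j t$ and $2\pi R(q^j t)$ differ by an integer multiple of $2\pi$. A routine computation gives $\int_0^1 f(t)\, dt = \frac{2}{\pi}$.

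Now I would set $r = n$, choose $g = f$ and $f_1 = f_2 = \cdots = f_n = f$, and specialize the defining relation \eqref{rmixing} for $n$-fold mixing of $T_q$ to the diagonal $m_1 = m_2 = \cdots = m_n = m$. Then $\sum_{j=1}^k m_j = km$, and the left-hand side of \eqref{rmixing} becomes
\[
\int_0^1 f(t) \prod_{k=1}^n f\!\left(T_q^{km}(t)\right) dt
= \int_0^1 |\sin(2\pi t)| \prod_{k=1}^n |\sin(2\pi q^{km} t)| \, dt,
\]
while the right-hand side becomes $\left(\int_0^1 f\right)^{n+1} = (2/\pi)^{n+1}$. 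Since $f \in L^\infty([0,1]) \subset L^{n+1}([0,1])$, the hypotheses for $n$-fold mixing are met.

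The only point that requires a brief justification is that the joint limit in \eqref{rmixing} (where each $m_j$ tends to infinity independently) implies the diagonal limit as $m \to \infty$ with $m_j = m$ for all $j$; but this is immediate from the definition of a joint limit of a function of several variables. The main conceptual obstacle — namely proving that $T_q$ is mixing of all orders — is already granted by the paragraph preceding the theorem (via the isomorphism of $([0,1],\mu,T_q)$ with a Bernoulli shift, which is mixing of all orders). Hence there is no further work beyond the identification above.
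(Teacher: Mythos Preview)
Your proof is correct and follows essentially the same approach as the paper: set $g=f_1=\cdots=f_n=|\sin(2\pi t)|$, identify $f(T_q^{km}t)=|\sin(2\pi q^{km}t)|$, and apply the $n$-fold mixing relation \eqref{rmixing} along the diagonal $m_1=\cdots=m_n=m$. You are, if anything, slightly more careful than the paper in explicitly noting that $f\in L^{n+1}$ and that the joint limit in \eqref{rmixing} implies the diagonal one.
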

\begin{proof}
Define
$g(t)=f_1(t)=\cdots=f_n(t)=|\sin(2\pi t)|$. For any nonzero integer $N$ we have
\[
\int_0^1 |\sin(2\pi N t)| dt=\frac{2}{\pi},
\]
and it follows from \eqref{rmixing}, using $m_1=m, \ldots, m_n=m$, that
\[
\lim_{m \to \infty} \int_0^1 |\sin(2\pi t)| \cdot \prod_{k=1}^n \left| \sin\left(2\pi q^{km} t\right) \right|  dt
= \left( \frac{2}{\pi} \right)^{n+1}.
\]
\end{proof}

See Sinai \cite{Sinai1994}.

Write
$S_k(\alpha)=\sum_{j=1}^k X_j(\alpha)$, where $X_j(\alpha)=\log|2-2\cos(2\pi j\alpha)|$ and
$\alpha=\frac{-1+\sqrt{5}}{2}$. 
Knill and Tangerman \cite{tangerman} 
talk about  motivations from KAM theory for caring about these sums. See Lagarias \cite{lagarias} and Ghys \cite{ghys} for more on small divisors in Hamiltonian dynamics, 
and Carleson and Gamelin \cite[p.~48, Theorem 7.2]{carleson} and Yoccoz \cite{yoccoz} for Arnold's theorem on analytic circle diffeomorphisms.

Marmi and Sauzin \cite{sauzin}

\section{Conclusions}
Kac and Salem \cite{MR0089284} prove the following. Let $c_k$ be a sequence of nonnegative real numbers for which 
$\sum_{k=1}^\infty c_k<\infty$. If the series
 \[
 \sum_{k=1}^\infty c_k \frac{1}{|\sin kx|}
 \]
 converges in a set of positive measure, then
 \[
 \sum_{k=1}^\infty c_k \log\left( \frac{1}{c_k} \right)<\infty,
 \]
 and if this condition is satisfied then $ \sum_{k=1}^\infty c_k \frac{1}{|\sin kx|}$ converges for almost all $x$.

Muromskii \cite[p.~54, Theorem 1]{MR0161081} proves that if $c_k$ is a sequence of nonnegative real numbers,  if $\alpha>1$,
 and if there is a set of positive measure on which the series
 \[
 \sum_{k=1}^\infty c_k \frac{1}{|\sin kx|^\alpha}
 \]
 converges, then for any $\delta>0$, the series
 \[
 \sum_{k=1}^\infty c_k^{\frac{4}{\alpha+3}+\delta}
 \]
 converges. 

Let $X$ be a random variable that is uniformly distributed on $[0,1]$. Kesten \cite[p.~111, Theorem 1 ]{MR0104640} proves that
 if $\sum_{k=0}^\infty |c_k|<\infty$, then the series
 \[
 \sum_{k=0}^\infty \frac{c_k}{\sin (2\pi 2^k X)}
 \]
 converges with probability $1$. Stated using measure theory, the conclusion is that for almost all $x \in \Omega$, the series 
 \[
  \sum_{k=0}^\infty \frac{c_k}{\sin (2\pi 2^k x)}
 \]
 converges. Kesten \cite[p.~114, Theorem 3]{MR0104640} also proves if $a_n \in \mathbb{R}$ and $a_n \to \infty$, then
 \[
 \frac{1}{a_n} \sum_{k=0}^{n-1} \frac{1}{\sin(2\pi 2^k X)} \to 0
 \]
 in probability. Stated using measure theory, the conclusion is that for each $\epsilon>0$,
 \[
 \lim_{n \to \infty} \mu\left\{x \in \Omega: \left|  \frac{1}{a_n} \sum_{k=0}^{n-1} \frac{1}{\sin(2\pi 2^k x)} \right| \geq \epsilon \right\} =0.
 \]
 
For $\mathbb{T}^d = \mathbb{R}^d / \mathbb{Z}^d = (\mathbb{R}/\mathbb{Z})^d$, 
let $\sigma$ be Haar measure on $\mathbb{T}$ and let $\sigma_d=\bigotimes_{1 \leq j \leq d} \sigma$ be Haar measure on $\mathbb{T}^d$, with $\sigma(\mathbb{T})=1$.
A sequence $t(n) \in \mathbb{T}^d$, $n \geq 1$,
is said to be \textbf{uniformly distributed} if for any arcs $I_1,\ldots,I_d$ in $\mathbb{T}$, with $I=\prod_{j=1}^d I_j$,
\[
\lim_{N \to \infty} \frac{|\{t(n) \in I : 1 \leq n \leq N\}|}{N} = \sigma_d(I).
\]
\textbf{Kronecker's approximation theorem} \cite[p.~108, Theorem 6.3]{travaglini} states that if
$\alpha_1,\ldots,\alpha_d \in \mathbb{R}$ and $\{1,\alpha_1,\ldots,\alpha_d\}$ is linearly independent over $\mathbb{Q}$,
then the sequence $(n\alpha_1+\mathbb{Z},\ldots,n\alpha_d+\mathbb{Z})$, $n \geq 1$, is uniformly distributed in $\mathbb{T}^d$.
Meyer \cite{meyer} is
a thorough presentation of
multidimensional Diophantine approximation and Diophantine approximation with locally compact abelian groups, and harmonic
analysis involving sets satisfying various Diophantine properties.

See Rudin \cite{rudinfourier}.

See Abraham and Marsden \cite[p. 395, Proposition 5.2.23]{abraham} and \cite[pp. 818-820, Proposition 5.2.23]{abraham}.

Measure theoretic results in Diophantine approximation are presented in Khinchin \cite{MR1451873}, 
Einsiedler and Ward \cite[Chapter 3]{einsiedler},
Rockett and Sz\"usz \cite[Chapters V and VI]{rockett},   Billingsley \cite[pp.~13--15, 319--326]{billingsley},  Kac \cite[Chapter 5]{kac},  Bugeaud \cite{bugeaud}, and Kesseb\"ohmer,
Munday and Stratmann \cite{kessebohmer}.
The significance of continued fraction expansions of irrational numbers in the early history of axiomatic probability theory is described
by Barone and Novikoff \cite{barone}, Durand and Mazliak \cite{MR2859995}, and von Plato \cite{vonplato}.
Veech \cite{veech} presents material on Diophantine approximation in the setting of topological dynamics.

We have been interested in results about almost all $x \in \Omega$, using Lebesgue measure $\mu$ on $[0,1]$. If $E \subset \Omega$ has $\mu(E)=0$, one can 
ask what the Hausdorff dimension $\dim_H E$ of the set $E$ is. Let $\mathcal{B}_K$ be the set of those $x \in \Omega$ such that $a_n(x) \leq K$ for all
$n \geq 1$, and let $\mathcal{B} = \bigcup_{K \geq 1} \mathcal{B}_K$, the set of those $x \in \Omega$ with bounded partial quotients.
We have  already stated that $\mu(\mathcal{B})=0$ \cite[p.~60, Theorem 29]{MR1451873}, and Jarn\'ik  \cite[Theorem~4.3]{MR2112110} proves that
the Hausdorff dimension of $\mathcal{B}$ is in fact 1; cf. Falconer \cite[p.~155, Theorem 10.3]{falconer} and Wolff \cite[p.~67, Chapter 9]{wolff} on Hausdorff dimension.
Hensley \cite{hensley} proves 
\[
\dim_H \mathcal{B}_K = 1 - \frac{6}{\pi^2} K^{-1} - \frac{72}{\pi^4} K^{-2} \log K + O(K^{-2}),\qquad K \to \infty.
\]
Dodson and Kristensen \cite{MR2112110} give a survey of results on the Hausdorff dimension of various sets that
appear in Diophantine approximation.

For a decreasing positive function $\psi$, the set
\[
W(\psi) = \{ x \in [0,1] : \textrm{$\norm{qx} < q \psi(q)$ for infinitely many  $q \in \mathbb{N}$} \}
\]
can be written as the limsup of a sequence of sets, 
\[
W(\psi) = \limsup_{n \to \infty} W(\psi, n) = \bigcap_{N=1}^\infty \bigcup_{n=N}^\infty W(\psi,n),
\]
where 
\[
W(\psi, n) = \bigcup_{2^{n-1} < q \leq 2^n} \bigcup_{0\leq p \leq q} \left( \frac{p}{q} - \psi(q), \frac{p}{q} + \psi(q) \right) \cap [0,1].
\]
One can exploit nice properties of limsup sets, such as the Borel-Cantelli lemma and invariance under ergodic transformations, to prove fundamental results in Diophantine approximation.  Beresnevich, Dickinson and Velani \cite{limsup} use this motiviation of Diophantine approximation to build a framework for a natural class of limsup sets on compact metric spaces.  Their general results readily imply the divergent  case of  Khinchin's theorem: $\mu(W(\psi)) = 0$ if $\sum q \psi(q) < \infty$  (Lemma \ref{khinchin}) and 
$\mu(W(\psi)) = 1$ if $\sum q \psi(q) = \infty$.  Their framework also establishes the divergent case of Jarn\'ik's  theorem: the $f$-Hausdorff dimension of $W(\psi)$ is $0$ if $\sum q f(\psi(q)) < \infty$ and is infinity if $\sum q f(\psi(q)) = \infty$, where $f$ is a
\textbf{dimension function} such that $r^{-1} f(r) \to \infty$ as $r \to 0$, and $r^{-1} f(r)$ is decreasing.

As well, rather than making statements about subsets of $\Omega$ of measure $1$, we can talk about sets whose complements are meager. (Measure theoretically,
the notion of a negligible set is made precise as a set of measure $0$, and topologically the notion of a negligible set is made precise as a meager set.)
Some results of this type are proved in Oxtoby \cite[Chapter 2]{oxtoby}.

Let $p$ be prime, let $N_p=\{0,\ldots,p-1\}$, and let $\mathbb{Q}_p \subset \prod_{\mathbb{Z}} N_p$ be the $p$-adic numbers. For $x \in \mathbb{Q}_p$
let
\[
v_p(x) = \inf\{k \in \mathbb{Z}: x(k) \neq 0\},\qquad |x|_p = p^{-v_p(x)},
\]
and let $\mathbb{Z}_p = \{x \in \mathbb{Q}_p: v_p(x) \geq 0\}$, the $p$-adic integers. 
Let $\mu_p$ be the Haar measure on the additive locally compact abelian group $\mathbb{Q}_p$ with
$\mu_p(\mathbb{Z}_p)=1$. 
We call $\lambda \in \mathbb{Z}_p$ a \textbf{$p$-adic Liouville number} if $\nu(\lambda)=\liminf_{n \to \infty} |n-\lambda|_p^{1/n}=0$, and let
$\mathscr{L}_p$ be the set of $p$-adic Liouville numbers. One checks that if $\lambda \in \mathbb{Z}_{\geq 0}$ then
$\nu(\lambda)=1$  \cite[p.~201, Exercise 66.A]{schikhof}.
It can be proved that $\mathscr{L}_p$ is a dense $G_\delta$ set in $\mathbb{Z}_p$ \cite[p.~204,
Theorem 67.3]{schikhof} and that
$\mu_p(\mathscr{L}_p)=0$ \cite[p.~205, Theorem 67.4]{schikhof}.
One reason for caring about $p$-adic Liouville numbers is that
if $x \in \mathbb{Z}_p$ is algebraic over $\mathbb{Q}$ then $\nu(x)=1$, and hence
a $p$-adic Liouville number is transcendental over $\mathbb{Q}$ \cite[p.~203, Theorem 67.2]{schikhof}.

Unlike in estimating exponential sums, the sums that we have been estimating
in this paper do not have cancellation. Instead we have estimated
them by showing that the terms are only occasionally large. 
For $A_n(t)=\sum_{k=1}^n \frac{\sin kt}{k}$, it can be proved \cite[p.~74, no.~25]{polyaII} that
\[
|A_n(t)| < \int_0^\pi \frac{\sin \theta}{\theta} d\theta=1.8519\ldots
\]
On the other hand, let $M_n$ be the maximum of $\sum_{k=1}^n \frac{|\sin kt|}{k}$. It can be proved \cite[p.~77, no.~38]{polyaII} that
\[
M_n=\frac{2}{\pi}\log n +O(1).
\]

Walfisz \cite{walfisz} presents results of his, of Oppenheim, and of Chowla  on sums
\[
\sum_{j \leq n} g(j) e^{2\pi i jx}
\]
 for $g(j)=r_k(j)$, the number of ways to write
$j$ as a sum of $k$ squares,  and for $g(j)=d(j)$, the number of positive divisors of $j$. One of the results of Chowla is that  if $x \in \Omega$ has bounded
partial quotients, then
\[
\sum_{j=1}^n d(j) e^{2\pi i jx}=O\left(n^{\frac{1}{2}} \log n \right).
\]
One of the results Walfisz proves is that if $\epsilon>0$, then for almost all $x \in \Omega$,
\[
\sum_{j=1}^n d(j) e^{2\pi ijx}= O\left( n^{\frac{1}{2}} (\log n)^{2+\epsilon} \right).
\]
Wilton \cite{wilton} proves some similar results. For example, Wilton proves that for any $x \in \Omega$, 
\[
\sum_{j=1}^n \frac{d(j)}{j} \cos 2\pi jx = o((\log n)^2),
\]
See Jutila's book on exponential sums \cite{jutila}. 

Let $f(t)=P_1(t)$ for $t \not \in \mathbb{Z}$ and $\{t\}=0$ for $t \in \mathbb{Z}$, where $P_1$ is a periodic
Bernoulli function. Namely, for all $t \in \mathbb{R}$,
\[
f(t) = - \frac{1}{\pi} \sum_{m=1}^\infty \frac{\sin 2\pi mt}{m}.
\]
Define $S_N(\theta) = \sum_{n=1}^N \frac{\mu(n)}{n} f(n\theta)$, where
$\mu$ is the \textbf{M\"obius function}. 
Davenport \cite[p.~11, Theorem 2]{davenport1937} proves that there is some $C$ such that for all $N$ 
and for all $\theta$,
\[
\left| \sum_{n=1}^N \frac{\mu(n)}{n} f(n\theta) \right| \leq C.
\]
Davenport \cite[p.~13, Theorem 4]{davenport1937}  also proves that for almost all $\theta$,
\[
\sum_{n=1}^N \frac{\mu(n)}{n} f(n\theta) \to -\frac{1}{\pi} \sin 2\pi \theta.
\]
See Jaffard \cite{jaffard}.

It would be a useful project to give an organized presentation of Hardy and Littlewood's results on Diophantine approximation. Their papers in this area are all
included in Hardy's collected works \cite{collectedpapers}. Hardy and Littlewood proved many pleasant results on various sums and series with coefficients related
to $\sin(n \pi x)$ and $R(nx)$. It would be desirable to streamline and systematically prove these results, to let a modern reader to be able to understand them without having to 
read the whole series of papers to figure out what results are being tacitly used from earlier work or assumed as general knowledge.
There is only a bare summary of Hardy and Littlewood's work in the commentary in Hardy's collected papers. Hardy's work on Diophantine approximation is briefly summarized by
Mordell \cite{mordell}. See also lecture V of Hardy's lectures on Ramanujan \cite{ramanujan}.

\section*{Acknowledgments}
The author thanks Hervé Queffélec (Université de Lille) and Martine Queffelec (Université de Lille) for a long correspondence on Hardy and Littlewood's
``curious power-series'' \cite{XXIV}. 

The author thanks Jeremy Voltz (University of Toronto) for discussions on Beresnevich, Dickinson and Velani's monograph \cite{limsup} on limsup sets.

\bibliographystyle{plain}
\bibliography{diophantine}

\end{document}